%% 
%% Copyright 2007-2025 Elsevier Ltd
%% 
%% This file is part of the 'Elsarticle Bundle'.
%% ---------------------------------------------
%% 
%% It may be distributed under the conditions of the LaTeX Project Public
%% License, either version 1.3 of this license or (at your option) any
%% later version.  The latest version of this license is in
%%    http://www.latex-project.org/lppl.txt
%% and version 1.3 or later is part of all distributions of LaTeX
%% version 1999/12/01 or later.
%% 
%% The list of all files belonging to the 'Elsarticle Bundle' is
%% given in the file `manifest.txt'.
%% 
%% Template article for Elsevier's document class `elsarticle'
%% with harvard style bibliographic references

\documentclass[preprint,12pt]{elsarticle}
\usepackage{amssymb}
%% The amsmath package provides various useful equation environments.
\usepackage{amsmath}
\usepackage{amsthm}
\usepackage[capitalize]{cleveref}
\usepackage{blkarray}
\graphicspath{{Fig/}}
\usepackage{geometry}
% line numbers
%\usepackage[mathlines, switch]{lineno}
%\usepackage[right]{lineno}

\theoremstyle{thmstyletwo}%
\newtheorem{theorem}{Theorem}[section]% meant for sectionwise numbers
%% optional argument [theorem] produces theorem numbering sequence instead of independent numbers for Proposition
%
%%\newtheorem{proposition}{Proposition}% to get separate numbers for theorem and proposition etc.

\newtheorem{example}{Example}[section]%
\newtheorem{remark}{Remark}[section]%

\numberwithin{equation}{section}
\newtheorem{lemma}{Lemma}[section]

%% Use the option review to obtain double line spacing
%% \documentclass[preprint,review,12pt]{elsarticle}

%% Use the options 1p,twocolumn; 3p; 3p,twocolumn; 5p; or 5p,twocolumn
%% for a journal layout:
%% \documentclass[final,1p,times]{elsarticle}
%% \documentclass[final,1p,times,twocolumn]{elsarticle}
%% \documentclass[final,3p,times]{elsarticle}
%% \documentclass[final,3p,times,twocolumn]{elsarticle}
%% \documentclass[final,5p,times]{elsarticle}
%% \documentclass[final,5p,times,twocolumn]{elsarticle}

%% For including figures, graphicx.sty has been loaded in
%% elsarticle.cls. If you prefer to use the old commands
%% please give \usepackage{epsfig}

%% The amssymb package provides various useful mathematical symbols
\usepackage{amssymb}
%% The amsmath package provides various useful equation environments.
\usepackage{amsmath}
%% The amsthm package provides extended theorem environments
%% \usepackage{amsthm}

%% The lineno packages adds line numbers. Start line numbering with
%% \begin{linenumbers}, end it with \end{linenumbers}. Or switch it on
%% for the whole article with \linenumbers.
%% \usepackage{lineno}

%\journal{Nuclear Physics B}
\geometry{a4paper,scale=0.75}
\begin{document}

\begin{frontmatter}

%% Title, authors and addresses

%% use the tnoteref command within \title for footnotes;
%% use the tnotetext command for theassociated footnote;
%% use the fnref command within \author or \affiliation for footnotes;
%% use the fntext command for theassociated footnote;
%% use the corref command within \author for corresponding author footnotes;
%% use the cortext command for theassociated footnote;
%% use the ead command for the email address,
%% and the form \ead[url] for the home page:
%% \title{Title\tnoteref{label1}}
%% \tnotetext[label1]{}

\title{Generalized Singular Value Decompositions of Dual Quaternion Matrix Triplets} %% Article title

 \author{Sitao Ling\fnref{label2}}%\corref{cor1}\fnref{label2}}
 \ead{sling@cumt.edu.cn}
%% \ead[url]{home page}
\fntext[label2]{School of Mathematics, China University of Mining and Technology, Jiangsu Xuzhou 221116,  P. R. China} 
%\cortext[cor1]{Corresponding author.}
%% \affiliation{organization={},
%%             addressline={},
%%             city={},
%%             postcode={},
%%             state={},
%%             country={}}
%% \fntext[label3]{}

\author{Wenxuan Ma\fnref{label2}\corref{cor1}} %% Author name
\cortext[cor1]{Corresponding author.}
\ead{Marcoper2013@gmail.com}

\author{Musheng Wei\fnref{label3}} %% Author name
\ead{mwei@shnu.edu.cn}
\fntext[label3]{College of Mathematics and Science, Shanghai Normal University, Shanghai 200234, P. R. China}

%% use optional labels to link authors explicitly to addresses:
% \author[label1,label2]{}
% \affiliation[label1]{organization={},
%             addressline={},
%             city={},
%             postcode={},
%             state={},
%             country={}}
%%
%% \affiliation[label2]{organization={},
%%             addressline={},
%%             city={},
%%             postcode={},
%%             state={},
%%             country={}}

%% Author affiliation
%\affiliation{organization={},%Department and Organization
%            addressline={}, 
%            city={Xuzhou},
%            postcode={221116}, 
%            state={Jiangsu},
%            country={P.R. China}}

%% Abstract
\begin{abstract}
	In signal processing and identification, generalized singular value decomposition (GSVD), related to a sequence of matrices in product/quotient form are
	essential numerical linear algebra tools. On behalf of the growing demand for efficient processing of coupled rotation-translation signals in modern engineering, we introduce the restricted SVD of a dual quaternion matrix triplet $(\boldsymbol{A},\boldsymbol{B},\boldsymbol{C})$ with $\boldsymbol{A}\in {\bf \mathbb{DQ}}^{m \times n}$, $\boldsymbol{B} \in {\bf \mathbb{DQ}}^{m \times p}$, $\boldsymbol{C} \in {\bf \mathbb{DQ}}^{q\times n}$, and the product-product SVD of a dual quaternion matrix triplet $(\boldsymbol{A},\boldsymbol{B},\boldsymbol{C})$ with $\boldsymbol{A}\in {\bf \mathbb{DQ}}^{m \times n}$, $\boldsymbol{B} \in {\bf \mathbb{DQ}}^{n \times p}$, $\boldsymbol{C} \in {\bf \mathbb{DQ}}^{p\times q}$. The two types of GSVDs represent a sophisticated matrix factorization that accounts for a given dual quaternion matrix in conjunction with two additional dual quaternion matrices. The decompositions can be conceptualized as an adaptation of the standard SVD, where the distinctive feature lies in the application of distinct inner products to the row and column spaces. Two examples are outlined to illustrate the feasibility of the decompositions.
\end{abstract}

%% Keywords
\begin{keyword}
%% keywords here, in the form: keyword \sep keyword
dual quaternion matrix, generalized singular value decomposition, restricted singular value decomposition, product-product singular value decomposition
%% PACS codes here, in the form: \PACS code \sep code

%% MSC codes here, in the form: \MSC code \sep code
%% or \MSC[2008] code \sep code (2000 is the default)

\end{keyword}

\end{frontmatter}
\section{Introduction}

Dual quaternions, an advanced mathematical framework that builds upon the foundation of dual numbers, were initially introduced by British mathematician William Kingdon Clifford in 1873. This sophisticated tool has become increasingly valuable for enhancing computational efficiency and improving the quality of graphic rendering and animation. Notably, in the context of 3D rotations and interpolations, dual quaternions provide a solution that avoids the gimbal lock issue, enhances numerical stability, and ensures smooth transition effects. They also play a crucial role in addressing hand-eye calibration challenges, as well as in applications such as simultaneous localization and mapping (SLAM) and spacecraft position tracking. The importance of dual quaternions and their matrix representations are pivotal in these applications, as supported by extensive literature \cite{QWL2023}. %Their utility extends across a wide range of fields, including neuroscience, robotics, computer graphics, and multi-agent formation control, 

Dual quaternion matrices, as a powerful algebraic tool for modeling coupled rotation-translation transformations, have shown unique advantages in signal processing tasks involving 3D/multi-dimensional dynamic signals. As a result, structure-preserving algorithms for the singular value decomposition (SVD) of dual quaternion matrices emerge \cite{DingWL,WLZG}. However, many practical scenarios (e.g., constrained point cloud registration, multi-channel signal equalization) require handling matrix triplets (sets of three dual quaternion matrices) with specific constraints (e.g., structural restrictions, subspace constraints). To address this, the restricted singular value decomposition (RSVD) of dual quaternion matrix triplets emerges as a critical technique, enabling constrained decomposition of coupled transformations while preserving key signal properties. This paper focuses on two specialized matrix decomposition techniques for dual quaternion matrix triplets: RSVD and PPSVD (product-product singular value decomposition). We dive deeper into each, and how they interact in the context of dual quaternion matrices.

Generalized singular value decomposition (GSVD) stands as a cornerstone in numerical linear algebra, with far-reaching applications across various fields such as signal processing, low-rank approximation of partitioned matrices, generalized eigenvalue problems, discrete linear ill-posed problems, and information retrieval. For an in-depth understanding of GSVD, one can refer to a comprehensive list of literature \cite{S1, S4, S6, S11, S13, S14, S19, S20, S23, S37, S45}. Among the many forms of GSVD, quotient singular value decomposition (QSVD) and product singular value decomposition (PSVD) of matrix pairs are well-known. %The QSVD, initially explored in \cite{S41} and later refined in \cite{S38}, has since been standardized in its nomenclature as presented in \cite{S10}. A variational formulation for QSVD is detailed in \cite{S4}. On the other hand, the PSVD was first introduced in \cite{S22} and has been further discussed in subsequent works, such as \cite{S15}. The nonuniqueness of the factorization factors in PSVD has been characterized in \cite{S5}.
%The QSVD for real matrices $A$ and $C$, is equivalent to the SVD of the product $AC^{-1}$, provided that $C$ is nonsingular. The QSVD was first proposed in \cite{P17}.
%The PSVD for real matrices $A$ and $B$ is associated with the SVD of the product $A^TB$, which is first introduced in \cite{S22}. %and further discussed in subsequent works, such as \cite{S15}. 
%The PSVD can be utilized in the computation of approximate intersections between subspaces within the stochastic realization problem \cite{Q1}, serving as an alternative to canonical correlation analysis. The primary distinction between these two algorithms is that canonical correlation analysis standardizes the data, thereby equating the relevant signal energy and the pure noise energy to a uniform level. In contrast, the PSVD can be viewed as a technique for decomposing the cross-covariance matrix into canonical directions without any prior normalization. The nonuniqueness of the factorization factors in PSVD was characterized in \cite{S5}.  
RSVD is a less well-known extension of the orthogonal singular value decomposition (OSVD), which is particularly noteworthy. 

The RSVD has applications in canonical correlation analysis in statistics and signal processing, generalized Gauss-Markov models \cite{Zha1990}, etc. In the context of real matrix approximation problems, Zha's constructive proof \cite{S45} served as an exemplar that computing the RSVD in a numerically robust fashion is not a straightforward task. He characterized the algorithm as computationally impractical due to the incorporation of potentially ill-conditioned transformation matrices at intermediate stages. Zha addressed the specified challenge through the derivation of an implicit Kogbetliantz algorithm, as referenced in \cite{R29}. %However, this algorithm does not incorporate a numerically stable approach for calculating $2\times 2$ RSVDs, which is a deficiency when compared to the $2\times 2$ QSVD algorithm presented by Bai and Demmel \cite{S1}. 
Ian N. Zwaan \cite{Zwaan} introduced a new algorithm based on Zha's implicit Kogbetliantz algorithm, enhancing the efficiency and accuracy of computing the RSVD by introducing innovative numerically stable algorithm and reducing the need for rank determinations. Chu, De Lathauwer, and De Moor \cite{R8} introduced a QR decomposition-based technique that does not necessitate a numerically stable $2\times 2$ RSVD. An alternative algorithm for determining the restricted singular values (RSVs) was proposed by Drma$\rm \check{c}$ \cite{R15}, employing a combination of Jacobi-type iterative procedures and non-orthonormal transformations. Zhang et al \cite{Zhanglp} developed neural network models for finding approximations of the RSVD and provided the globally asymptotic stability analyses.  

Recently, we investigated randomized QSVD for quaternion matrix pairs having the same number of columns. Specifically, we designed stable randomized algorithms for computing partial QSVD components \cite{LingLLJ} and conducted error analysis.
He, Michael and Zeng \cite{S2} investigated the generalized singular value decompositions (T-QSVD and T-PSVD) for tensors based on the T-product, explored their structures and algorithms, and applied them to color image watermarking, demonstrating the advantages in simultaneously processing two watermarks and simplifying key management. 

In the realm of matrix approximation problems, directly forming the product of real matrices $A, B, C$ with compatible dimensions often leads to significant errors, which can severely impact final experimental outcomes. To address this matrix approximation issue without directly constructing the product $ABC$, the PPSVD emerges as a highly effective tool \cite{Zha}.

To the best of our knowledge, there is a notable scarcity of literature focusing on the RSVD of dual quaternion matrix triplets. In a recent work \cite{LingMW}, we made significant strides by investigating the QSVD and the PSVD of dual quaternion matrix pairs. These studies not only deepened our understanding of the structural properties of dual quaternion matrices but also laid a solid theoretical foundation for our current exploration.  

This paper is organized as follows. In \cref{section2} we introduce the fundamental knowledge of dual quaternion and dual quaternion matrices. In \cref{sec:QtypeSVD} we hereby present our recent research findings on the QSVD for dual quaternion matrix pairs to facilitate further investigations. \cref{sec:RDQSVD} and \cref{sec:PPSVD} constitute the core work of this paper, which about the framework of RSVD and PPSVD for dual quaternion triplets. \cref{sec:exm} presents two illustrative examples to demonstrate the aforementioned decompositions. \cref{sec:concl} concludes the paper with a summary of key findings and implications.

%In this work, we propose the PSVD and RSVD of matrix triplet $(\boldsymbol{A},\boldsymbol{B},\boldsymbol{C})$. 

%The PSVD has been discussed in \cite{S15, S22}, the QSVD was first proposed in \cite{P17} and later refined in \cite{S38,P21}, and the restricted SVD (RSVD), explicitly introduced in \cite{S45} and further elaborated in \cite{P4, S11}. The PSVD for matrices $A$ and $B$ is associated with the SVD of the product $A^TB$. The QSVD for matrices $A$ and $C$ is equivalent to the SVD of the product $AC^{-1}$, provided that $C$ is nonsingular.
%
%The algorithm for effectively implementing the PSVD with numerical stability are detailed in references \cite{S15} and \cite{S22}. Its applications span various fields, such as solving the orthogonal Procrustes problem as mentioned in \cite{Q12}, determining balancing transformations for state-space systems according to \cite{S15} and \cite{Q16}, and performing the Kalman decomposition of linear systems as discussed in \cite{Q9}. Additionally, the PSVD can be utilized in the computation of approximate intersections between subspaces within the stochastic realization problem \cite{Q1}, serving as an alternative to canonical correlation analysis. The primary distinction between these two algorithms is that canonical correlation analysis standardizes the data, thereby equating the relevant signal energy and the pure noise energy to a uniform level. In contrast, the PSVD can be viewed as a technique for decomposing the cross-covariance matrix into canonical directions without any prior normalization.  
%

\section{Dual quaternions and dual quaternion matrices}\label{section2}
Denote the sets of real numbers, dual numbers, quaternions, and dual quaternions by $\mathbb{R}$, $\mathbb{D}$, $\mathbb{Q}$, and $\mathbb{DQ}$, respectively. A dual number $\mathsf{q}$ is expressed as $\mathsf{q} = q_{{st}} + q_{{in}} \epsilon$, where $q_{{st}}, q_{{in}} \in \mathbb{R}$ and $\epsilon^2 = 0$. If the standard part $q_{{st}}$ is nonzero, then $\mathsf{q}$ is termed \textit{appreciable}. The conjugate of $\mathsf{q}$, denoted by $\mathsf{q}^*$, is itself.

For a given real coefficient polynomial $F(x)$, when evaluated at a dual number, we have $F(a + b\epsilon) = F(a) + F'(a)b\epsilon$, where $F'$ represents the derivative of $F$. The coefficient of $\epsilon$ is sometimes referred to as the \textit{infinitesimal} part, which intuitively implies that $\epsilon$ is \textquotedblleft so small\textquotedblright \ that it squares to zero.

As introduced by Qi and Ling \cite{s15}, a total order can be defined for dual numbers. For dual numbers $\mathsf{p} = p_{{st}} + p_{{in}} \epsilon$ and $\mathsf{q} = q_{{st}} + q_{{in}} \epsilon$ in $\mathbb{D}$ with $p_{{st}}, q_{{st}}, p_{{in}}, q_{{in}} \in \mathbb{R}$, we say $\mathsf{p} > \mathsf{q}$ if $p_{{st}} > q_{{st}}$ or $p_{{st}} = q_{{st}}$ and $p_{{in}} > q_{{in}}$; $\mathsf{p} = \mathsf{q}$ if and only if $p_{{st}} = q_{{st}}$ and $p_{{in}} = q_{{in}}$. We define $\mathsf{q}$ as a positive dual number if $\mathsf{q} > 0$, and a nonnegative dual number if $\mathsf{q} \ge 0$.

Consider a quaternion $\mathbf{q} = q_0 + q_1 \mathbf{i} + q_2 \mathbf{j} + q_3 \mathbf{k}$, where $q_0, q_1, q_2, q_3 \in \mathbb{R}$, and $\mathbf{i}, \mathbf{j}, \mathbf{k}$ are imaginary units satisfying
\begin{equation*}
	\bf{i}^{2}=\bf{j}^{2}=\bf{k}^{2}=\bf{ijk}=-1, \  \   \bf{ij}=-\bf{ji}=\bf{k},  \  \   \bf{jk}=-\bf{kj}=\bf{i}, \  \  \bf{ki}=-\bf{ik}=\bf{j}.
\end{equation*} 
The conjugate of $\mathbf{q}$ is $\mathbf{q}^* = q_0 - q_1 \mathbf{i} - q_2 \mathbf{j} - q_3 \mathbf{k}$, and the magnitude of $\mathbf{q}$ is $\left| \mathbf{q} \right| = \sqrt{\mathbf{q} \mathbf{q}^*} = \sqrt{q_0^2 + q_1^2 + q_2^2 + q_3^2}$.

As an extension of the dual number, a dual quaternion $\boldsymbol{q}$ is of the form $\boldsymbol{q} = \mathbf{q}_{{st}} + \mathbf{q}_{{in}} \epsilon$, where $\mathbf{q}_{{st}}$ and $\mathbf{q}_{{in}}$ are quaternions, and $\epsilon$ is the dual unit with $\epsilon^2 = 0$. $\epsilon$ commutes with multiplication when it encounters real numbers, complex numbers, or quaternions. We refer to $\mathbf{q}_{{st}}$ as the standard part of $\boldsymbol{q}$, and $\mathbf{q}_{{in}}$ as the dual part of $\boldsymbol{q}$. If $\mathbf{q}_{{st}} \neq 0$, we say that $\boldsymbol{q}$ is appreciable; otherwise, $\boldsymbol{q}$ is infinitesimal. The conjugate of $\boldsymbol{q}$ is denoted as $\boldsymbol{q}^* = \mathbf{q}_{{st}}^* + \mathbf{q}_{{in}}^* \epsilon$. The dual quaternion vector inner product $\langle \cdot, \cdot \rangle$ in the dual quaternion ring is defined by $\langle \boldsymbol{u}, \boldsymbol{v} \rangle = \boldsymbol{v}^* \boldsymbol{u}$ for any $\boldsymbol{u}, \boldsymbol{v} \in \mathbb{DQ}^m$, from which the induced dual quaternion vector norm, a dual number, is defined by $\left| \boldsymbol{u} \right|_2 = \sqrt{\langle \boldsymbol{u}, \boldsymbol{u} \rangle}$.

Let the sets of $m \times n$ matrices over the quaternions and dual quaternions be denoted by $\mathbb{Q}^{m \times n}$ and $\mathbb{DQ}^{m \times n}$, respectively. A dual quaternion matrix $\boldsymbol{A} = (\boldsymbol{a}_{ij}) \in \mathbb{DQ}^{m \times n}$ can be expressed as $\boldsymbol{A} = \mathbf{A}_{{st}} + \mathbf{A}_{{in}} \epsilon$, where $\mathbf{A}_{{st}}, \mathbf{A}_{{in}} \in \mathbb{Q}^{m \times n}$ represent the standard and infinitesimal parts of $\boldsymbol{A}$, respectively. We define $\boldsymbol{A}$ as \textit{appreciable} if $\mathbf{A}_{{st}} \neq \mathbf{O}$; otherwise, $\boldsymbol{A}$ is considered \textit{infinitesimal}. 
%As described by Ling, He, and Qi in \cite{s10}, 
The conjugate transpose of $\boldsymbol{A}$ is given by $\boldsymbol{A}^*=(\boldsymbol{a}_{ji}^*) = \mathbf{A}_{{st}}^* + \mathbf{A}_{{in}}^* \epsilon$, and it holds that $(\boldsymbol{AB})^* = \boldsymbol{B}^* \boldsymbol{A}^*$ for any $\boldsymbol{B} \in \mathbb{DQ}^{n \times p}$. For a square matrix $\boldsymbol{A} \in \mathbb{DQ}^{m \times m}$, we say $\boldsymbol{A}$ is \textit{nonsingular} if there exists a square matrix $\boldsymbol{B} \in \mathbb{DQ}^{m \times m}$ such that $\boldsymbol{AB} = \boldsymbol{BA} = I_m$, and the inverse of $\boldsymbol{A}$ is $\boldsymbol{A}^{-1} = \boldsymbol{B}$. If the square matrix $\boldsymbol{A} \in \mathbb{DQ}^{m \times m}$ satisfies $\boldsymbol{A}^* = \boldsymbol{A}$, then $\boldsymbol{A}$ is \textit{Hermitian}. Additionally, $\boldsymbol{A}$ is \textit{unitary} if $\boldsymbol{A}^* \boldsymbol{A} = \boldsymbol{A} \boldsymbol{A}^* = I_m$.

Based on the definition of dual quaternion matrices and their operational properties, we can derive the following lemma.
\begin{lemma}\label{DM1}
	For any  ${\bf M} \in {\bf \mathbb{Q}}^{m \times n}$, 
	the matrix \begin{equation}\label{eq:DM1}
		\begin{pmatrix}
			I_m&{\bf M}\epsilon\\
			-{\bf M}^{*}\epsilon&I_n\\
		\end{pmatrix}
	\end{equation}
	is a unitary matrix.
\end{lemma}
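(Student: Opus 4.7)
The plan is to verify the two defining identities of unitarity directly, namely $\boldsymbol{A}\boldsymbol{A}^{*}=I_{m+n}$ and $\boldsymbol{A}^{*}\boldsymbol{A}=I_{m+n}$, where $\boldsymbol{A}$ denotes the block matrix in \eqref{eq:DM1}. Since $\epsilon$ commutes with quaternions and $\epsilon^{2}=0$, I expect each of these computations to reduce almost entirely after one round of block multiplication.

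First, I would compute the conjugate transpose of $\boldsymbol{A}$. Using the rule $(\mathbf{M}\epsilon)^{*}=\mathbf{M}^{*}\epsilon$, together with the fact that transposing the block matrix swaps off-diagonal blocks, I get
\begin{equation*}
\boldsymbol{A}^{*}=\begin{pmatrix} I_m & -\mathbf{M}\epsilon \\ \mathbf{M}^{*}\epsilon & I_n \end{pmatrix}.
\end{equation*}
Next I would carry out the $2\times 2$ block multiplication $\boldsymbol{A}\boldsymbol{A}^{*}$. The key observation is that each diagonal entry of the product contains a cross term of the form $(\mathbf{M}\epsilon)(\mathbf{M}^{*}\epsilon)=\mathbf{M}\mathbf{M}^{*}\epsilon^{2}$ or $(\mathbf{M}^{*}\epsilon)(\mathbf{M}\epsilon)=\mathbf{M}^{*}\mathbf{M}\epsilon^{2}$, both of which vanish because $\epsilon^{2}=0$. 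The off-diagonal blocks reduce to $-\mathbf{M}\epsilon+\mathbf{M}\epsilon=\mathbf{O}$ and $-\mathbf{M}^{*}\epsilon+\mathbf{M}^{*}\epsilon=\mathbf{O}$. The product is therefore $I_{m+n}$. The verification of $\boldsymbol{A}^{*}\boldsymbol{A}=I_{m+n}$ is entirely analogous: again only the $\epsilon^{2}$ terms would contribute nontrivially on the diagonal, and these are killed.

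The proof is essentially a one-line block computation, so there is no real conceptual obstacle. The only point where care is needed is bookkeeping: one must preserve the order of the quaternion factors inside each block (since quaternion multiplication is noncommutative) and remember that conjugation on a dual quaternion acts as $(\mathbf{Q}_{\mathrm{st}}+\mathbf{Q}_{\mathrm{in}}\epsilon)^{*}=\mathbf{Q}_{\mathrm{st}}^{*}+\mathbf{Q}_{\mathrm{in}}^{*}\epsilon$ (with no sign flip on $\epsilon$). Once these conventions are fixed, the annihilation $\epsilon^{2}=0$ does all of the remaining work.
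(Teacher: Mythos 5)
Your proof is correct and follows the same route as the paper, which simply declares the unitarity ``evidently valid'' from the definition; you have merely written out the block computation (with $\epsilon^{2}=0$ annihilating the cross terms) that the paper leaves implicit.
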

\begin{proof}
	According to the definition of unitary matrices, the lemma is evidently valid.
\end{proof}

\section{Quotient SVD of a dual quaternion matrix pair}\label{sec:QtypeSVD}

In order to introduce the GSVD of dual quaternion matrix triplets, it is imperative to have a thorough understanding of the QR, SVD and quotient SVD of dual quaternion matrices.
 \begin{lemma}[QR decomposition]\label{QRT}
 	Suppose that $\boldsymbol{A}\in {\bf \mathbb{DQ}}^{m \times n}$ has full column rank. Then there exists a unitary matrix $\boldsymbol{Q}\in {\bf \mathbb{DQ}}^{m \times m}$ and an upper trapezoidal dual matrix $\boldsymbol{R}\in {\bf \mathbb{DQ}}^{r \times n}$ such that
 	$$\boldsymbol{A}=
 	\boldsymbol{Q}\begin{pmatrix}
 		\boldsymbol{R}\\
 		0 
 	\end{pmatrix},$$
 	where the number of non-infinitesimal rows  of $\boldsymbol{R}$ provides the appreciable rank of $\boldsymbol{A}$, denoted by ${\rm Arank}(\boldsymbol{A})$.
 \end{lemma}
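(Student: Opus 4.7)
The plan is to reduce the dual quaternion QR to the classical quaternion QR of the standard part, and then use the infinitesimal unitary building block of \cref{DM1} to eliminate the leftover dual residual. Split $\boldsymbol{A} = \mathbf{A}_{st} + \mathbf{A}_{in}\epsilon$. The full column rank assumption forces $\mathbf{A}_{st} \in \mathbb{Q}^{m \times n}$ to have column rank $r := {\rm Arank}(\boldsymbol{A})$; I would then invoke the well-established quaternion QR decomposition to obtain a unitary $\mathbf{Q}_0 \in \mathbb{Q}^{m \times m}$ and an upper trapezoidal $\mathbf{R}_0 \in \mathbb{Q}^{r \times n}$ with $\mathbf{A}_{st} = \mathbf{Q}_0 \begin{pmatrix} \mathbf{R}_0 \\ 0 \end{pmatrix}$.

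Next, premultiplying $\boldsymbol{A}$ by $\mathbf{Q}_0^*$ and partitioning $\mathbf{Q}_0^* \mathbf{A}_{in} = \begin{pmatrix} \mathbf{B}_1 \\ \mathbf{B}_2 \end{pmatrix}$ conformally with the block sizes $r$ and $m - r$, one obtains
$$\mathbf{Q}_0^* \boldsymbol{A} = \begin{pmatrix} \mathbf{R}_0 + \mathbf{B}_1 \epsilon \\ \mathbf{B}_2 \epsilon \end{pmatrix}.$$
The upper block is already upper trapezoidal (modulo an infinitesimal correction kept as part of $\boldsymbol{R}$), so only the $\mathbf{B}_2 \epsilon$ block must be annihilated. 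I would apply on the left the block-unitary matrix of \cref{DM1} with a quaternion matrix $\mathbf{M} \in \mathbb{Q}^{r \times (m-r)}$ to be chosen. Using $\epsilon^2 = 0$, a short calculation shows the lower block transforms into $(\mathbf{B}_2 - \mathbf{M}^* \mathbf{R}_0)\epsilon$, reducing the problem to solving the quaternion matrix equation $\mathbf{M}^* \mathbf{R}_0 = \mathbf{B}_2$.

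Finally, since $\mathbf{R}_0$ has full row rank $r$, a right inverse exists and $\mathbf{M}^*$ can be chosen accordingly (for instance $\mathbf{M}^* := \mathbf{B}_2 \mathbf{R}_0^*(\mathbf{R}_0 \mathbf{R}_0^*)^{-1}$). Denoting the resulting dual unitary from \cref{DM1} by $\boldsymbol{U}$ and setting $\boldsymbol{Q} := \mathbf{Q}_0 \boldsymbol{U}^*$ (a product of unitaries, hence unitary) together with $\boldsymbol{R} := \mathbf{R}_0 + \mathbf{B}_1 \epsilon \in \mathbb{DQ}^{r \times n}$ then yields $\boldsymbol{A} = \boldsymbol{Q} \begin{pmatrix} \boldsymbol{R} \\ 0 \end{pmatrix}$. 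By construction, the non-infinitesimal rows of $\boldsymbol{R}$ coincide with the nonzero rows of $\mathbf{R}_0$ and number exactly $r = {\rm Arank}(\boldsymbol{A})$. The main obstacle is establishing solvability of the matrix equation $\mathbf{M}^* \mathbf{R}_0 = \mathbf{B}_2$; this is precisely where the full column rank hypothesis enters, guaranteeing that $\mathbf{R}_0$ has the requisite row rank so that each row of $\mathbf{B}_2$ lies in its row space.
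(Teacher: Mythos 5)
You localize the difficulty correctly, but the step you defer is exactly where the argument fails, and it cannot be repaired inside your setup. Full column rank of $\boldsymbol{A}$ as a dual quaternion matrix means that all $n$ singular values in \cref{lm:4.1} are nonzero dual numbers; it does not mean they are appreciable, so it does not force the standard part $\mathbf{A}_{st}$ to have rank $n$. Consequently $\mathbf{R}_0$ only has full row rank $r={\rm Arank}(\boldsymbol{A})$, and when $r<n$ the rows of the lower block $\mathbf{B}_2$ of $\mathbf{Q}_0^*\mathbf{A}_{in}$ need not lie in the row space of $\mathbf{R}_0$, so $\mathbf{M}^*\mathbf{R}_0=\mathbf{B}_2$ is in general unsolvable. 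A minimal counterexample is $\boldsymbol{A}={\rm diag}(1,\epsilon)$: it has full column rank (${\rm rank}(\boldsymbol{A})=2$, ${\rm Arank}(\boldsymbol{A})=1$), and with $\mathbf{Q}_0=I_2$, $\mathbf{R}_0=(1,\,0)$, $\mathbf{B}_1=(0,\,0)$, $\mathbf{B}_2=(0,\,1)$, the equation $\mathbf{M}^*(1,\,0)=(0,\,1)$ has no solution. No other choice of unitaries can help either: multiplication by unitary dual quaternion matrices preserves all singular values, including the infinitesimal one $\epsilon$, whereas a factorization in which $\boldsymbol{R}$ has only ${\rm Arank}(\boldsymbol{A})$ rows would force ${\rm rank}(\boldsymbol{A})\le{\rm Arank}(\boldsymbol{A})$. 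This is precisely why the lemma is phrased so that $\boldsymbol{R}$ may contain purely infinitesimal rows, with only the number of its non-infinitesimal rows equal to ${\rm Arank}(\boldsymbol{A})$; in the example the row $(0,\ \epsilon)$ must survive in $\boldsymbol{R}$, while your construction tries to annihilate it. Setting $r:={\rm Arank}(\boldsymbol{A})$ and zeroing the whole block $\mathbf{B}_2\epsilon$ is therefore valid only in the special case ${\rm Arank}(\boldsymbol{A})=n$, i.e.\ when $\mathbf{A}_{st}$ itself has full column rank.

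Even in that special case a second gap remains: the factor you keep, $\boldsymbol{R}=\mathbf{R}_0+\mathbf{B}_1\epsilon$, is not upper trapezoidal, because $\mathbf{B}_1$ is a dense quaternion matrix, and the lemma asserts triangular form for the dual matrix $\boldsymbol{R}$ itself; the infinitesimal entries below the diagonal must be eliminated, not kept as a correction. This requires a further argument, for instance an additional unitary factor $I+\mathbf{N}\epsilon$ with $\mathbf{N}$ skew-Hermitian chosen so that the strictly lower triangular part of $\mathbf{B}_1+\mathbf{N}\mathbf{R}_0$ vanishes; when $\mathbf{R}_0$ is square and nonsingular this reduces to finding an upper triangular $\mathbf{X}$ with $\mathbf{X}+\mathbf{X}^*=\mathbf{T}+\mathbf{T}^*$ for $\mathbf{T}=\mathbf{B}_1\mathbf{R}_0^{-1}$, which is possible because a Hermitian quaternion matrix has real diagonal. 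Alternatively one can run a Householder-type orthogonalization directly over $\mathbb{DQ}$, handling standard and infinitesimal parts simultaneously. As written, both the row count of $\boldsymbol{R}$ and its trapezoidal shape are asserted rather than established.
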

\begin{lemma}[\cite{s12}, Theorem 6.1, DQSVD]\label{lm:4.1}
	For any $\boldsymbol{A} \in {\bf \mathbb{DQ}}^{m \times n}$, there exists a unitary dual quaternion matrices $\boldsymbol{U} \in {\bf \mathbb{DQ}}^{m \times m}$ and $\boldsymbol{V} \in {\bf \mathbb{DQ}}^{n \times n}$ such that 
	$$\boldsymbol{U}^{*}\boldsymbol{AV}=\begin{pmatrix}
		{\sf \Sigma}_{t}&0\\
		0&0 
	\end{pmatrix},$$
	where ${\sf \Sigma}_{t}={\rm diag}({\sf \mu}_{1}, \ldots, {\sf \mu}_{r}, \ldots, {\sf \mu}_{t})\in {\bf \mathbb{D}}^{t \times t}$,  $r \le t \le \min \{m,n\}$, ${\sf \mu}_{1} \ge {\sf \mu}_{2} \ge \cdots \ge {\sf \mu}_{r}$ are positive appreciable dual numbers, and ${\sf \mu}_{r+1} \ge {\sf \mu}_{r+2} \ge \cdots \ge {\sf \mu}_{t}$ are positive infinitesimal dual numbers. Counting possible multiplicities of the diagonal entries, the form of ${\sf \Sigma}_{t}$ is unique. The appreciable rank of $\boldsymbol{A}$ is $r$, i.e., ${\rm Arank}(\boldsymbol{A})=r$, and the rank of $\boldsymbol{A}$ is $t$, i.e., ${\rm rank}(\boldsymbol{A})=t$.
\end{lemma}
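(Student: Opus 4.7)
The plan is to construct $\boldsymbol{U}$ and $\boldsymbol{V}$ in two stages: first diagonalize the standard part by the classical quaternion SVD, then eliminate the residual infinitesimal off-diagonal entries by an infinitesimal unitary correction of the type supplied by \cref{DM1}.

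Write $\boldsymbol{A}=\mathbf{A}_{st}+\mathbf{A}_{in}\epsilon$. Apply the quaternion SVD to $\mathbf{A}_{st}$ to obtain unitary quaternion matrices $\mathbf{U}_{st}$, $\mathbf{V}_{st}$ such that $\mathbf{U}_{st}^{*}\mathbf{A}_{st}\mathbf{V}_{st}=\Sigma_{st}:={\rm diag}(S_{r},0)$, where $S_{r}={\rm diag}(\sigma_{1},\ldots,\sigma_{r})$ has positive decreasing entries and $r={\rm rank}(\mathbf{A}_{st})$. For any dual quaternion matrix $\boldsymbol{U}=\mathbf{U}_{st}+\mathbf{U}_{in}\epsilon$, the identity $\boldsymbol{U}^{*}\boldsymbol{U}=I$ forces $\mathbf{U}_{st}$ to be quaternion-unitary and $\mathbf{U}_{st}^{*}\mathbf{U}_{in}$ to be skew-Hermitian; hence every unitary dual quaternion matrix factors as $\mathbf{U}_{st}(I+K\epsilon)$ with $K$ skew-Hermitian. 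Seeking $\boldsymbol{U}=\mathbf{U}_{st}(I+K\epsilon)$ and $\boldsymbol{V}=\mathbf{V}_{st}(I+L\epsilon)$ reduces the task to finding skew-Hermitian $K,L$ for which
\[
\boldsymbol{U}^{*}\boldsymbol{A}\boldsymbol{V}=\Sigma_{st}+\bigl(B+\Sigma_{st}L-K\Sigma_{st}\bigr)\epsilon
\]
is diagonal, where $B=\mathbf{U}_{st}^{*}\mathbf{A}_{in}\mathbf{V}_{st}$.

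Partition $B$, $K$, $L$ conformably with $\Sigma_{st}$. Because $S_{r}$ is invertible, the off-diagonal blocks are annihilated by choosing $L_{12}=-S_{r}^{-1}B_{12}$ and $K_{21}=B_{21}S_{r}^{-1}$, with $K_{12}=-K_{21}^{*}$ and $L_{21}=-L_{12}^{*}$ to preserve skew-Hermiticity; this is exactly the structure embodied in \cref{DM1}. The lower-right block then equals $B_{22}$, which I would diagonalize by a further quaternion SVD absorbed into the non-unique null-space columns of $\mathbf{U}_{st}$ and $\mathbf{V}_{st}$ (they may be post-multiplied by arbitrary quaternion unitaries on those blocks). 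Its singular values supply the positive infinitesimal entries $\mu_{r+1}\ge\cdots\ge\mu_{t}$. The upper-left block becomes $B_{11}+S_{r}L_{11}-K_{11}S_{r}$, and I would exploit the remaining skew-Hermitian freedom in $K_{11},L_{11}$ to reduce it to a real positive diagonal whose entries become the infinitesimal parts of $\mu_{1},\ldots,\mu_{r}$.

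The main obstacle is this last reduction when $S_{r}$ has repeated entries: the entrywise equations for $K_{11}$ and $L_{11}$ decouple into $2\times 2$ quaternion systems whose scalar determinant $\sigma_{i}^{2}-\sigma_{j}^{2}$ vanishes precisely across a repeated-value group. I would handle it by first performing a quaternion SVD inside each repeated-$\sigma$ subblock of $B_{11}$, implemented as an additional quaternion unitary on the corresponding invariant subspaces of $\mathbf{U}_{st}$ and $\mathbf{V}_{st}$; this simultaneously makes the residual linear system consistent and fixes the ordering of equal $\mu_{i}$. Uniqueness of $\Sigma_{t}$ then follows because the $\sigma_{i}$ are the unique nonzero singular values of $\mathbf{A}_{st}$, the infinitesimal corrections inside each repeated-value group are the singular values of an explicitly determined quaternion matrix, and the $\mu_{r+i}$ are the singular values of $B_{22}$; the identities ${\rm Arank}(\boldsymbol{A})=r$ and ${\rm rank}(\boldsymbol{A})=t$ follow by counting the positive appreciable and positive infinitesimal diagonal entries.
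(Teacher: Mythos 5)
Your overall strategy is sound and, up to a point, correct: the factorization of every unitary dual quaternion matrix as $\mathbf{U}_{st}(I+K\epsilon)$ with $K$ skew-Hermitian, the reduction of the problem to $B+\Sigma_{st}L-K\Sigma_{st}$ with $B=\mathbf{U}_{st}^{*}\mathbf{A}_{in}\mathbf{V}_{st}$, the elimination of the off-diagonal blocks via $L_{12}=-S_r^{-1}B_{12}$, $K_{21}=B_{21}S_r^{-1}$, and the quaternion SVD of $B_{22}$ absorbed into the null-space columns (which supplies the positive infinitesimal entries $\mu_{r+1},\ldots,\mu_t$) are all fine. Note that the paper itself does not prove this lemma; it quotes it from \cite{s12}, so your argument has to stand alone.

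The genuine gap is your treatment of the repeated-singular-value groups, which is precisely the delicate step. Inside a group where $\sigma_i=\sigma_j=\sigma>0$, the only residual freedom in the standard parts is a \emph{simultaneous} unitary similarity: replacing the relevant columns of $\mathbf{U}_{st}$ by $\mathbf{U}_{st}W$ and of $\mathbf{V}_{st}$ by $\mathbf{V}_{st}Z$ turns the corresponding block of $\Sigma_{st}$ into $\sigma W^{*}Z$, so you are forced to take $Z=W$; a two-sided quaternion SVD of the subblock $B_G$ of $B_{11}$ (with $W\neq Z$) destroys the already-diagonalized standard part, while with $W=Z$ you can only perform a unitary similarity $W^{*}B_GW$, which cannot diagonalize a general quaternion block and is not an SVD. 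What can and must be diagonalized is the Hermitian part $\tfrac{1}{2}(B_G+B_G^{*})$, using the spectral theorem for Hermitian quaternion matrices: its (real, possibly negative or zero) eigenvalues are the infinitesimal parts of the $\mu_i$ in that group, and making the Hermitian part diagonal is exactly what renders your $2\times 2$ systems consistent, since consistency at a coalescent pair reads $b_{ij}+\bar b_{ji}=0$. Consequently both your claim that the $(1,1)$ correction can be made ``real positive diagonal'' and your closing assertion that the corrections in a repeated-value group are ``the singular values of an explicitly determined quaternion matrix'' are false: for $\boldsymbol{A}=1-\epsilon$ the singular value is $1-\epsilon$, with negative infinitesimal part; and for $\boldsymbol{A}=I_2+{\rm diag}(1,-1)\epsilon$ the singular values are $1+\epsilon$ and $1-\epsilon$, whereas your recipe (singular values of $B_G={\rm diag}(1,-1)$) would return $1+\epsilon$ twice, contradicting $\boldsymbol{A}^{*}\boldsymbol{A}=I+2\,{\rm diag}(1,-1)\epsilon$. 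The uniqueness discussion must be repaired accordingly, replacing ``singular values of'' the group subblock by ``eigenvalues of the Hermitian part of'' it.
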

In \cite{LingMW}, we introduced two quotient SVD of a dual quaternion matrix pair $(\boldsymbol{A},\boldsymbol{B})$, where $\boldsymbol{A}$ and $\boldsymbol{B}$ have the same number of columns.
\begin{lemma}[DQGSVD1]\label{thm:4.3}
	Let $\boldsymbol{C}=\begin{pmatrix}
		\boldsymbol{A}\\
		\boldsymbol{B}
	\end{pmatrix}\in {\bf \mathbb{DQ}}^{(m+p) \times n}$ with $\boldsymbol{A}\in {\bf \mathbb{DQ}}^{m \times n}$, $\boldsymbol{B}\in {\bf \mathbb{DQ}}^{p \times n}$, and ${\rm rank}(\boldsymbol{C})=k$, ${\rm Arank}(\boldsymbol{C})=t$. There exist two unitary dual quaternion matrices $\boldsymbol{U}\in {\bf \mathbb{DQ}}^{m \times m}, \boldsymbol{V}\in {\bf \mathbb{DQ}}^{p \times p}$ and a dual quaternion matrix $\boldsymbol{X}\in {\bf \mathbb{DQ}}^{n \times n}$ such that 
	\begin{equation*}
		\boldsymbol{A}=\boldsymbol{U}({\sf \Sigma}_{\boldsymbol{A}}, \  0)\boldsymbol{X}, \  \
		\boldsymbol{B}=\boldsymbol{V}({\sf \Sigma}_{\boldsymbol{B}}, \ 0)\boldsymbol{X},
	\end{equation*}
	where
	\begin{equation*}
		\begin{aligned}
			&{\sf \Sigma}_{\boldsymbol{A}}=
			\begin{blockarray}{ccccc}
				r & q & t_{1} & k-r-q-t_{1}  \\
				\begin{block}{(cccc)c}
					I&0  &0&0&r\\
					0& \ {\sf S}_{\boldsymbol{A}}  &0&0&q \\
					0&0&\Xi\epsilon&0&t_{1}\\
					0 &0 &0&0&m-r-q-t_{1}\\
				\end{block}
			\end{blockarray},\\ 
			&{\sf \Sigma}_{\boldsymbol{B}}=
			\begin{blockarray}{cccccc}
				r_1 &r-r_1 & q & t_{1} & k-r-q-t_{1}  \\
				\begin{block}{(ccccc)c}
					{\Sigma}\epsilon&0&0&0&0&r_2\\
					0&0&0&0&0&p+r-k-r_2\\
					0&0&\ {\sf S}_{\boldsymbol{B}}   &0&0&q \\
					0&0&0&I&0&t_1\\
					0&0&0&0&I& k-r-q-t_{1}\\
				\end{block}
			\end{blockarray}.
		\end{aligned}
	\end{equation*}
	$\Sigma$, $\Xi$ are real diagonal matrices, ${\sf S}_{\boldsymbol{A}}$ and ${\sf S}_{\boldsymbol{B}}$ are appreciable dual matrices and
	\begin{equation*}
		\begin{split}
			\Sigma&={\rm diag}{(\sigma_{1},\sigma_{2},\ldots,\sigma_{r_{1}})},\quad \sigma_{1}\ge\sigma_{2}\cdots\ge\sigma_{r_{1}}>0, \\
			\Xi&={\rm diag}(\xi_1,\xi_2,\ldots,\xi_{t_{1}}), \quad \xi_1\ge \xi_2\ge\cdots\ge \xi_{t_{1}}>0,\\
			{\sf S}_{\boldsymbol{A}} &={\rm diag}({\sf c}_{1}, {\sf c}_{2}, \ldots, {\sf c}_{q}),\quad 0<{\sf c}_{1}\le \cdots\le {\sf c}_{q}<1,  \\
			{\sf S}_{\boldsymbol{B}} &={\rm diag}({\sf s}_{1}, {\sf s}_{2}, \ldots, {\sf s}_{q}),\quad 1>{\sf s}_{1}\ge\cdots\ge {\sf s}_{q}>0,\\
			&\qquad {\sf c}_{i}^{2}+{\sf s}_{i}^2=1,\quad i=1, 2,\ldots, q.
		\end{split}
	\end{equation*}
	More precisely, there exist two unitary dual quaternion matrices $\hat{\boldsymbol{U}}\in {\bf \mathbb{DQ}}^{m \times m}, \hat{\boldsymbol{V}}\in {\bf \mathbb{DQ}}^{p \times p}$ and a nonsingular dual quaternion matrix $\hat{\boldsymbol{X}}\in {\bf \mathbb{DQ}}^{n \times n}$ such that
	\begin{equation*}
		\boldsymbol{A}=\hat{\boldsymbol{U}}(\hat{\sf \Sigma}_{\boldsymbol{A}},  \boldsymbol{N}_{\boldsymbol{A}}\epsilon,0)\hat{\boldsymbol{X}}, \  \
		\boldsymbol{B}=\hat{\boldsymbol{V}}(\hat{{\sf \Sigma}}_{\boldsymbol{B}},\boldsymbol{N}_{\boldsymbol{B}}\epsilon, 0)\hat{\boldsymbol{X}},
	\end{equation*}
	where the blocked dual quaternion matrix 
	$\begin{blockarray}{cc}
		s &  \\
		\begin{block}{(c)c}
			\boldsymbol{N}_{\boldsymbol{A}} &  m\\
			\boldsymbol{N}_{\boldsymbol{B}}  &  p  \\
		\end{block}
	\end{blockarray}$ 
	has orthonormal columns,
	
	\begin{equation*}
		\begin{aligned}
			&\hat{{\sf \Sigma}}_{\boldsymbol{A}}=
			\begin{blockarray}{ccccc}
				\hat{r} & \hat{q} & \hat{t} & t-\hat{r}-\hat{q}-\hat{t} \\
				\begin{block}{(cccc)c}
					I&0  &0&0&\hat{r}\\
					0& \ \hat{{\sf S}}_{\boldsymbol{A}}  &0&0&\hat{q} \\
					0&0&\hat{\Xi}\epsilon&0&\hat{t}\\
					0 &0 &0&0&m-\hat{r}-\hat{q}-\hat{t}\\
				\end{block}
			\end{blockarray},\\ 
			&\hat{{\sf \Sigma}}_{\boldsymbol{B}}=
			\begin{blockarray}{cccccc}
				\hat{r}_1 &\hat{r}-\hat{r}_1 & \hat{q} & \hat{t} & t-\hat{r}-\hat{q}-\hat{t}  \\
				\begin{block}{(ccccc)c}
					\hat{\Sigma}\epsilon&0&0&0&0&\hat{r}_2\\
					0&0&0&0&0&p+\hat{r}-t-\hat{r}_2\\
					0&0&\  \hat{{\sf S}}_{\boldsymbol{B}}   &0&0&\hat{q} \\
					0&0&0&I&0&\hat{t}\\
					0&0&0&0&I& t-\hat{r}-\hat{q}-\hat{t}\\
				\end{block}
			\end{blockarray}.
		\end{aligned}
	\end{equation*}
	$\hat{\Sigma}$, $\hat{\Xi}$ are real diagonal matrices, $\hat{{\sf S}}_{\boldsymbol{A}}$ and $\hat{{\sf S}}_{\boldsymbol{B}}$ are appreciable dual matrices and
	\begin{equation*}
		\begin{split}
			\hat{\Sigma}&={\rm diag}{(\hat{\sigma}_{1},\hat{\sigma}_{2},\ldots,\hat{\sigma}_{\hat{r}_{1}})},\quad \hat{\sigma}_{1}\ge \hat{\sigma}_{2}\ge\cdots\ge \hat{\sigma}_{\hat{r}_{1}}>0,\\
			\hat{\Xi}&={\rm diag}(\hat{\xi}_1,\hat{\xi}_2,\ldots\,\hat{\xi}_{\hat{t}}),\quad \hat{\xi}_1\ge \hat{\xi}_2\ge\cdots\ge \hat{\xi}_{\hat{t}}>0,\\
			\hat{{\sf S}}_{\boldsymbol{A}} &={\rm diag}(\hat{{\sf c}}_{1}, \hat{{\sf c}}_{2}, \ldots, \hat{{\sf c}}_{\hat{q}}),\quad 0<\hat{{\sf c}}_{1}\le \cdots\le \hat{{\sf c}}_{\hat{{q}}}<1,  \\
			\hat{{\sf S}}_{\boldsymbol{B}} &={\rm diag}(\hat{{\sf s}}_{1}, \hat{{\sf s}}_{2}, \ldots, \hat{{\sf s}}_{\hat{q}}),\quad 1>\hat{{\sf s}}_{1}\ge\cdots\ge \hat{{\sf s}}_{\hat{q}}>0,\\
			&\qquad \hat{{\sf c}}_{i}^{2}+\hat{{\sf s}}_{i}^2=1,\quad i=1,2,\ldots,\hat{q}.
		\end{split}
	\end{equation*}
\end{lemma}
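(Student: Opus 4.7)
The plan is to establish the decomposition in two stages: first a ``basic'' form with $\boldsymbol{X}$ unitary but carrying a trailing zero-column block of width $n-k$, and then the refined form with a nonsingular $\hat{\boldsymbol{X}}$ obtained by consolidating the infinitesimal directions into the $\boldsymbol{N}\epsilon$ terms. For the first stage, apply Lemma~\ref{lm:4.1} (DQSVD) to the stacked matrix $\boldsymbol{C}$ to produce unitary $\boldsymbol{P} \in \mathbb{DQ}^{(m+p)\times(m+p)}$ and unitary $\boldsymbol{X} \in \mathbb{DQ}^{n \times n}$ together with a $k \times k$ diagonal dual matrix $\boldsymbol{\Sigma}_{\boldsymbol{C}}$ (with $t$ appreciable and $k-t$ infinitesimal positive diagonal entries) satisfying
\begin{equation*}
\boldsymbol{P}^{*}\boldsymbol{C}\boldsymbol{X} = \begin{pmatrix} \boldsymbol{\Sigma}_{\boldsymbol{C}} & 0 \\ 0 & 0 \end{pmatrix}.
\end{equation*}
Partitioning $\boldsymbol{P}$ row-wise as $\boldsymbol{P} = \begin{pmatrix} \boldsymbol{P}_{\boldsymbol{A}} \\ \boldsymbol{P}_{\boldsymbol{B}} \end{pmatrix}$ and letting $\boldsymbol{Q}_{\boldsymbol{A}}, \boldsymbol{Q}_{\boldsymbol{B}}$ denote the first $k$ columns of $\boldsymbol{P}_{\boldsymbol{A}}, \boldsymbol{P}_{\boldsymbol{B}}$ gives
\begin{equation*}
\boldsymbol{A}\boldsymbol{X} = (\boldsymbol{Q}_{\boldsymbol{A}}\boldsymbol{\Sigma}_{\boldsymbol{C}},\; 0), \qquad \boldsymbol{B}\boldsymbol{X} = (\boldsymbol{Q}_{\boldsymbol{B}}\boldsymbol{\Sigma}_{\boldsymbol{C}},\; 0),
\end{equation*}
together with the key orthogonality identity $\boldsymbol{Q}_{\boldsymbol{A}}^{*}\boldsymbol{Q}_{\boldsymbol{A}} + \boldsymbol{Q}_{\boldsymbol{B}}^{*}\boldsymbol{Q}_{\boldsymbol{B}} = I_k$.

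The heart of the proof is a CS-type decomposition for the pair $(\boldsymbol{Q}_{\boldsymbol{A}}, \boldsymbol{Q}_{\boldsymbol{B}})$. Applying Lemma~\ref{lm:4.1} to $\boldsymbol{Q}_{\boldsymbol{A}}$ yields unitaries $\boldsymbol{U}, \boldsymbol{W}$ putting $\boldsymbol{U}^{*}\boldsymbol{Q}_{\boldsymbol{A}}\boldsymbol{W}$ in diagonal canonical form. The identity then forces $(\boldsymbol{Q}_{\boldsymbol{B}}\boldsymbol{W})^{*}(\boldsymbol{Q}_{\boldsymbol{B}}\boldsymbol{W}) = I_k - \boldsymbol{W}^{*}\boldsymbol{Q}_{\boldsymbol{A}}^{*}\boldsymbol{Q}_{\boldsymbol{A}}\boldsymbol{W}$ to be diagonal, so a further left unitary $\boldsymbol{V}$ diagonalizes $\boldsymbol{Q}_{\boldsymbol{B}}\boldsymbol{W}$ with entries $\mathsf{s}_i$ satisfying $\mathsf{c}_i^2 + \mathsf{s}_i^2 = 1$. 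Sorting the diagonal entries of $\boldsymbol{U}^{*}\boldsymbol{Q}_{\boldsymbol{A}}\boldsymbol{W}$ and grouping them by their behavior at the appreciable and infinitesimal levels identifies the block parameters: $r$ counts entries appreciably equal to $1$ (of which $r_1$ carry a nonzero infinitesimal $\boldsymbol{B}$-partner $\sigma_j \epsilon$ and $r-r_1$ carry a zero $\boldsymbol{B}$-partner), $q$ counts entries appreciably in $(0,1)$, $t_1$ counts entries of the form $\xi_j\epsilon$, and $k-r-q-t_1$ counts the zero entries corresponding to purely $\boldsymbol{B}$-directions. Absorbing $\boldsymbol{W}^{*}$ into the right factor together with $\boldsymbol{\Sigma}_{\boldsymbol{C}}$ and renaming the result $\boldsymbol{X}$ then produces the announced $\boldsymbol{\Sigma}_{\boldsymbol{A}}, \boldsymbol{\Sigma}_{\boldsymbol{B}}$.

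For the refined ``more precisely'' version, I observe that the infinitesimal columns of $\boldsymbol{\Sigma}_{\boldsymbol{A}}, \boldsymbol{\Sigma}_{\boldsymbol{B}}$ together with the trailing zero-column block can be consolidated. By Lemma~\ref{DM1}, the block $\begin{pmatrix} I_t & \mathbf{M}\epsilon \\ -\mathbf{M}^{*}\epsilon & I_{n-t} \end{pmatrix}$ is unitary for any quaternion matrix $\mathbf{M}$; right-multiplying $\boldsymbol{X}$ by such a factor preserves unitarity on the standard part while allowing the infinitesimal content of the last $k-t$ active columns to be relocated into off-diagonal positions. Choosing $\mathbf{M}$ so that the resulting $\hat{\boldsymbol{X}}$ is nonsingular (a generic choice suffices) and so that the stacked matrix $\begin{pmatrix} \boldsymbol{N}_{\boldsymbol{A}} \\ \boldsymbol{N}_{\boldsymbol{B}} \end{pmatrix}$ has orthonormal columns delivers the refined decomposition with factors $(\hat{\boldsymbol{\Sigma}}_{\boldsymbol{A}}, \boldsymbol{N}_{\boldsymbol{A}}\epsilon, 0)$ and $(\hat{\boldsymbol{\Sigma}}_{\boldsymbol{B}}, \boldsymbol{N}_{\boldsymbol{B}}\epsilon, 0)$.

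The main obstacle will be the CS-type step at the $\epsilon$-level, specifically the emergence of the $\Sigma$ and $\Xi$ blocks from degenerate appreciable cosines. Standard-part cosines equal to $0$ or $1$ cannot be distinguished at the appreciable level, so one must expand the orthogonality identity $\boldsymbol{Q}_{\boldsymbol{A}}^{*}\boldsymbol{Q}_{\boldsymbol{A}} + \boldsymbol{Q}_{\boldsymbol{B}}^{*}\boldsymbol{Q}_{\boldsymbol{B}} = I_k$ to first order in $\epsilon$ to recover the infinitesimal partner structure hidden in $\boldsymbol{Q}_{\boldsymbol{A}}$ and $\boldsymbol{Q}_{\boldsymbol{B}}$. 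Performing this expansion while preserving the canonical diagonal form of $\boldsymbol{U}^{*}\boldsymbol{Q}_{\boldsymbol{A}}\boldsymbol{W}$ and adjusting $\boldsymbol{U}, \boldsymbol{V}, \boldsymbol{W}$ accordingly is the most delicate bookkeeping in the proof; the technique parallels the one developed in \cite{LingMW} for the quotient SVD of dual quaternion matrix pairs.
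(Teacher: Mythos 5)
First, note that the paper you are working from does not actually prove this lemma: it is quoted from \cite{LingMW}, so there is no in-paper argument to compare against and your proposal has to stand on its own. Its overall route --- DQSVD of the stacked matrix $\boldsymbol{C}$ via Lemma~\ref{lm:4.1}, followed by a CS-type decomposition of the partitioned factor with $\boldsymbol{Q}_{\boldsymbol{A}}^{*}\boldsymbol{Q}_{\boldsymbol{A}}+\boldsymbol{Q}_{\boldsymbol{B}}^{*}\boldsymbol{Q}_{\boldsymbol{B}}=I_k$ --- is the natural Paige--Saunders-style strategy and is presumably close in spirit to \cite{LingMW}. The genuine gap sits exactly where the dual quaternion setting departs from the complex one. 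From the fact that $(\boldsymbol{Q}_{\boldsymbol{B}}\boldsymbol{W})^{*}(\boldsymbol{Q}_{\boldsymbol{B}}\boldsymbol{W})=I_k-\boldsymbol{W}^{*}\boldsymbol{Q}_{\boldsymbol{A}}^{*}\boldsymbol{Q}_{\boldsymbol{A}}\boldsymbol{W}$ is diagonal you conclude that ``a further left unitary $\boldsymbol{V}$ diagonalizes $\boldsymbol{Q}_{\boldsymbol{B}}\boldsymbol{W}$''. Over $\mathbb{DQ}$ this is false for the columns paired with cosines appreciably equal to $1$: since a dual quaternion column cannot have a nonzero infinitesimal squared norm, those diagonal Gram entries are forced to be exactly $0$, the corresponding columns of $\boldsymbol{Q}_{\boldsymbol{B}}\boldsymbol{W}$ are purely infinitesimal, and all their inner products vanish identically, so the Gram identity gives no control over them at the quaternion level and no unitary can normalize them. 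Handling them requires a separate quaternion-level SVD of this infinitesimal bundle, which is precisely what creates the $\Sigma\epsilon$ block of size $r_2\times r_1$ (with the split $r_1$, $r-r_1$ of the first $r$ columns) and forces a compensating right unitary on those columns to be pushed back into $\boldsymbol{U}$ through the $I_r$ block of ${\sf \Sigma}_{\boldsymbol{A}}$; an analogous separate treatment of the infinitesimal cosines yields $\Xi\epsilon$. You name the resulting block structure correctly, but the argument that achieves it is exactly the ``delicate bookkeeping'' you defer to \cite{LingMW} --- and since the lemma you are proving \emph{is} the result of \cite{LingMW}, that deferral is circular; without it the proof is incomplete at its central step.

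The refined ``more precisely'' statement is also not established by your sketch. Right-multiplying by a Lemma~\ref{DM1}-type unitary and choosing $\mathbf{M}$ ``generically'' so that $\hat{\boldsymbol{X}}$ is nonsingular and the stacked block of $\boldsymbol{N}_{\boldsymbol{A}},\boldsymbol{N}_{\boldsymbol{B}}$ has orthonormal columns is not an argument: these objects are determined by the data, not free parameters, and genericity buys nothing. The actual mechanism is different. In your first stage $\boldsymbol{X}$ fails to be nonsingular only because you absorbed all of $\boldsymbol{\Sigma}_{\boldsymbol{C}}$, whose last $k-t$ diagonal entries are infinitesimal and hence not invertible. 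For the refined form one must split off those infinitesimal singular directions: absorb into $\hat{\boldsymbol{X}}$ only the invertible appreciable $t\times t$ block (together with the positive real scalars of the infinitesimal singular values), keep the corresponding columns of the unitary factor of $\boldsymbol{C}$ as the orthonormal block multiplying $\epsilon$ --- which is why $\bigl(\boldsymbol{N}_{\boldsymbol{A}};\boldsymbol{N}_{\boldsymbol{B}}\bigr)$ automatically has orthonormal columns --- and rerun the CS analysis on the first $t$ columns only, which is why the hatted blocks are partitioned relative to $t={\rm Arank}(\boldsymbol{C})$ instead of $k={\rm rank}(\boldsymbol{C})$. Your proposal explains neither the orthonormality of the $\boldsymbol{N}$ block nor this switch from $k$ to $t$, so this stage needs to be reworked along those lines.
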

\begin{lemma}[DQGSVD2]\label{DDQGSVD}
	Let	$\boldsymbol{C}=\begin{pmatrix}
		\boldsymbol{A}\\
		\boldsymbol{B}
	\end{pmatrix} \in {\bf \mathbb{DQ}}^{(m+p) \times n} 	$
	with $\boldsymbol{A} \in {\bf \mathbb{DQ}}^{m \times n}, \boldsymbol{B} \in {\bf \mathbb{DQ}}^{p \times n}$, and ${\rm Arank}(\boldsymbol{C})=t$. 
	There exist two unitary dual quaternion matrices $\boldsymbol{U}\in {\bf \mathbb{DQ}}^{m \times m}, \boldsymbol{V}\in {\bf \mathbb{DQ}}^{p \times p}$  and a dual quaternion matrix $\boldsymbol{X}\in {\bf \mathbb{DQ}}^{n \times n}$, such that 
	\begin{equation*}
		\boldsymbol{U}^{*}\boldsymbol{A}\boldsymbol{X}=
		({{\sf \Sigma}_{\boldsymbol{A}}}, \  0), \  \
		\boldsymbol{V}^{*}\boldsymbol{BX}=
		({\sf \Sigma}_{\boldsymbol{B}}, \  0),
	\end{equation*}
	where
	\begin{subequations}\label{sigABi}
		\begin{align*}
			{\sf \Sigma}_{\boldsymbol{A}}
			&= \begin{blockarray}{ccccc}
				r& q & l  & t-(r+q+l)&   \\
				\begin{block}{(cccc)c}
					I &0  &0 &0  & r  \\
					0&\ {\sf S}_{\boldsymbol{A}}  &0 &0  & q\\
					0&0&{\Xi}\epsilon&0  & l \\
					0&0&0&0  &  m-r-q-l   \\
				\end{block}
			\end{blockarray},  
			\end{align*}
		\end{subequations}
			\begin{subequations}
			\begin{align*}
			{{\sf \Sigma}}_{\boldsymbol{B}}&=
			\begin{blockarray}{ccccc}
				r_1& r-r_1 & q & t-(r+q)\\
				\begin{block}{(cccc)c}
					{\Sigma}\epsilon&0&0&0&r_2\\
					0&0&0&0&p-t+r-r_2\\
					0&0&{\sf S}_{\boldsymbol{B}}&0&q\\
					0&0&0&I&t-(r+q)\\
				\end{block}
			\end{blockarray}.
		\end{align*}
	\end{subequations}
	$\Sigma$, $\Xi$ are real diagonal matrices,  ${\sf S}_{\boldsymbol{A}}$ and ${\sf S}_{\boldsymbol{B}}$ are appreciable dual matrices and
	\begin{equation*}
		\begin{split}
			\Sigma&={\rm diag}{(\sigma_{1},\sigma_{2},\ldots,\sigma_{r_{1}})}, \  \   \sigma_{1}\ge\sigma_{2}\ge\cdots\ge\sigma_{r_{1}}>0,\\
			\Xi &={\rm diag}(\xi_1,\xi_2,\ldots,\xi_l),\ \xi_1\ge \xi_2\ge\cdots\ge \xi_l>0,   \\
			{\sf S}_{\boldsymbol{A}} &={\rm diag}({\sf c}_{1}, {\sf c}_{2}, \ldots, {\sf c}_{q}),\ 0<{\sf c}_{1}\le \cdots\le {\sf c}_{q}<1, \\
			{\sf S}_{\boldsymbol{B}} &={\rm diag}({\sf s}_{1}, {\sf s}_{2}, \ldots, {\sf s}_{q}),\ 1>{\sf s}_{1}\ge\cdots\ge{\sf s}_{q}>0, \\
			&\qquad {\sf c}_{i}^{2}+{\sf s}_{i}^2=1,\ i=1,2,\ldots,q.
		\end{split}
	\end{equation*}	
\end{lemma}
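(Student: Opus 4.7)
The plan is to prove this via the same three-stage strategy used for Lemma \ref{thm:4.3}, but now organized around the appreciable rank $t$ of $\boldsymbol{C}$ rather than its total rank. Structurally the target decomposition is coarser than DQGSVD1: the ``purely infinitesimal'' column directions of $\boldsymbol{C}$ are absorbed into the trailing zero block of $(\Sigma_{\boldsymbol{A}},0)$ and $(\Sigma_{\boldsymbol{B}},0)$ rather than being isolated as a separate identity block. I would therefore first reduce $\boldsymbol{C}$ by a right transformation that isolates its appreciable column space in the leading $t$ columns, then separately expose the appreciable, purely infinitesimal, and zero substructure of the $\boldsymbol{A}$-block and the $\boldsymbol{B}$-block by unitary left transformations, and finally clean up dual cross-terms with the unitary matrices provided by Lemma \ref{DM1}.

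Concretely, the first step is to apply Lemma \ref{lm:4.1} (DQSVD) to $\boldsymbol{C}$, producing a nonsingular right factor $\boldsymbol{X}$ with $\boldsymbol{C}\boldsymbol{X}=(\boldsymbol{C}_{1},0)$, where $\boldsymbol{C}_{1}=\binom{\boldsymbol{A}_{1}}{\boldsymbol{B}_{1}}\in\mathbb{DQ}^{(m+p)\times t}$ has full appreciable column rank $t$. The second step applies Lemma \ref{lm:4.1} to $\boldsymbol{A}_{1}$ alone, yielding a unitary $\boldsymbol{U}_{1}$ and an inner right transformation that puts $\boldsymbol{A}_{1}$ into block form exhibiting three column groups: an appreciable group of combined size $r+q$, a purely infinitesimal group of size $l$ contributing $\Xi\epsilon$, and a zero group of size $t-(r+q+l)$. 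Because $\boldsymbol{C}_{1}$ has full appreciable rank $t$, the corresponding columns of $\boldsymbol{B}_{1}$ in the latter two groups together have appreciable rank $t-(r+q)$, so a unitary left transformation $\boldsymbol{V}_{1}^{*}$ brings that portion of $\boldsymbol{B}_{1}$ into the trailing identity block of $\Sigma_{\boldsymbol{B}}$.

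The third step handles the remaining $r+q$ columns, where both $\boldsymbol{A}_{1}$ and $\boldsymbol{B}_{1}$ can be appreciable, by invoking a dual quaternion cosine--sine-type decomposition on that sub-block. Applying Lemma \ref{lm:4.1} once more to the residual $\boldsymbol{B}_{1}$ block produces three subgroups of sizes $r_{1}$, $r-r_{1}$, and $q$: on the $r_{1}$-group $\boldsymbol{B}_{1}$ becomes $\Sigma\epsilon$, on the $(r-r_{1})$-group it becomes zero, and on the $q$-group both $\boldsymbol{A}_{1}$ and $\boldsymbol{B}_{1}$ survive as positive appreciable diagonal dual numbers ${\sf c}_{i}$ and ${\sf s}_{i}$. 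The relation ${\sf c}_{i}^{2}+{\sf s}_{i}^{2}=1$ then falls out of the unitarity of the accumulated row transformations acting on the $(r+q)$-column block: each column of the stacked $\binom{{\sf S}_{\boldsymbol{A}}}{{\sf S}_{\boldsymbol{B}}}$ is a unit vector in the dual quaternion inner product.

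The main obstacle will be the dual cross-term cleanup. Once the standard parts of $\boldsymbol{A}$ and $\boldsymbol{B}$ have been placed in the prescribed block shapes, their dual parts will in general retain off-block infinitesimal terms of the form $\boldsymbol{M}\epsilon$ that must be annihilated in order to realize the exact block-diagonal $\Sigma_{\boldsymbol{A}}$ and $\Sigma_{\boldsymbol{B}}$. Lemma \ref{DM1} is the key device: unitary dual quaternion matrices of the form $I\pm\boldsymbol{M}\epsilon$ eliminate such terms exactly, and because $\epsilon^{2}=0$ each correction is decoupled from any back-reaction on already-processed blocks at first order. The technical care lies in ordering the eliminations (first the identity blocks, then $\Xi\epsilon$, then the cosine--sine block), verifying consistency modulo $\epsilon^{2}=0$, and observing that the prescribed ordering and positivity of $\sigma_{i},\xi_{i},{\sf c}_{i},{\sf s}_{i}$ are inherited automatically from the DQSVDs of the individual sub-blocks.
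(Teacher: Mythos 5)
The paper itself states this lemma without proof (it is imported from \cite{LingMW}), so your argument has to stand on its own, and as written it has two genuine problems. The first is in Step 1: if ${\rm rank}(\boldsymbol{C})>{\rm Arank}(\boldsymbol{C})=t$, no \emph{nonsingular} $\boldsymbol{X}$ can achieve $\boldsymbol{C}\boldsymbol{X}=(\boldsymbol{C}_{1},0)$ with $\boldsymbol{C}_{1}$ having only $t$ columns. After \cref{lm:4.1}, $\boldsymbol{C}\boldsymbol{V}$ has ${\rm rank}(\boldsymbol{C})$ nonzero columns, of which ${\rm rank}(\boldsymbol{C})-t$ are purely infinitesimal with standard-part directions outside the column space of the standard part; comparing standard and dual parts (the standard parts of the columns of $\boldsymbol{X}$ that get annihilated must span the kernel of the standard part of $\boldsymbol{C}\boldsymbol{V}$, which then forces the infinitesimal part to map that kernel into the range of the standard part) shows such columns cannot be zeroed by invertible column operations. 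This is precisely why the lemma only asserts that $\boldsymbol{X}$ is ``a dual quaternion matrix'', and why \cref{DQRSVD111}, built on DQGSVD2, has possibly singular $\boldsymbol{P},\boldsymbol{Q}$, in contrast to \cref{RSVD2} built on \cref{thm:4.3}. The repair is easy (let $\boldsymbol{X}$ annihilate those directions), but the nonsingularity claim misreads the essential difference between the two lemmas.

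The more serious gap is at the central step. The simultaneous $q$-block with ${\sf c}_{i}^{2}+{\sf s}_{i}^{2}=1$ is the actual content of the lemma, and you obtain it by ``invoking a dual quaternion cosine--sine-type decomposition'' that is established nowhere (neither in this paper nor as a cited result), and whose proof over $\mathbb{DQ}$ is of essentially the same depth as the lemma itself. Your fallback sketch, applying \cref{lm:4.1} to the residual $\boldsymbol{B}_{1}$ block, does not substitute for it: the right unitary factor of that DQSVD acts on the same $r+q$ columns and destroys the diagonal form of the $\boldsymbol{A}$-block achieved in Step 2, and you give no mechanism forcing both blocks into their required forms simultaneously. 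Likewise, the claim that ${\sf c}_{i}^{2}+{\sf s}_{i}^{2}=1$ ``falls out of the unitarity of the accumulated row transformations'' is not valid: left unitary factors preserve column norms, but the columns of $\begin{pmatrix}{\sf \Sigma}_{\boldsymbol{A}}\\ {\sf \Sigma}_{\boldsymbol{B}}\end{pmatrix}$ are images of the columns of $\boldsymbol{C}$ under the right factor $\boldsymbol{X}$, which contains non-unitary scalings, so there is no reason these columns are unit vectors unless you arrange it explicitly --- either by extracting an orthonormal factor of $\boldsymbol{C}$ (via DQQR or DQSVD) and proving a genuine CS-type decomposition of its two row blocks, or by a final diagonal rescaling of the relevant columns of $\boldsymbol{X}$ together with a check that this rescaling preserves the block forms already obtained. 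The Lemma~\ref{DM1}-type cleanup of infinitesimal cross-terms that you anticipate is indeed the right tool for the last stage, but until the simultaneous reduction itself is carried out in detail, the proof is missing its key step.
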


\section{Restricted SVD of a dual quaternion matrix triplet}\label{sec:RDQSVD}

The RSVD of the real matrix triplet $(A,B,L)$ is closed related to the regularized Gauss-Markov linear model \cite{Zha1990}
$$\min\{\|u\|_2^2+\lambda^2\|Lx\|_2^2\}, \quad  \text{s.t.} \quad Ax+Bu=b,$$
where $A\in \mathbb{R}^{m\times n}, B\in \mathbb{R}^{m\times q}$ and $L\in \mathbb{R}^{p\times n}$.  
With the aid of aformationed two types of DQGSVD of a dual quaternion matrix pair, we further study the restricted SVD of a dual quaternion matrix triplet $(\boldsymbol{A}, \boldsymbol{B},\boldsymbol{C})$, denoted by DQRSVD for short. The DQRSVD is an extension of the DQGSVD that can handle the decomposition of three dual quaternion matrices simultaneously. It has potential application prospects  such as multivariable system analysis, tensor decomposition, and signal processing involving multiple data sets.

\begin{theorem}[DQRSVD1]\label{DQRSVD111}
	Let $\boldsymbol{A} \in {\bf \mathbb{DQ}}^{m \times n}$, $\boldsymbol{B} \in ^{m \times p}$, and $\boldsymbol{C} \in {\bf \mathbb{DQ}}^{q \times n}$. There exist dual quaternion matrices $\boldsymbol{P}\in {\bf \mathbb{DQ}}^{m \times m}$, $\boldsymbol{Q}\in {\bf \mathbb{DQ}}^{n \times n}$, and unitary dual quaternion matrices $\boldsymbol{U}\in {\bf \mathbb{DQ}}^{p\times p}$, $\boldsymbol{V}\in {\bf \mathbb{DQ}}^{q\times q}$ such that	
	\begin{subequations}\label{RSVDresu1}
		\begin{align}
			\boldsymbol{PAQ}&=
			\begin{blockarray}{ccc}
				n-k_{1} &k_{1}\\ 
				\begin{block}{(cc)c}
					{\sf \Sigma}_{\boldsymbol{A}}& 0  &m_{1}  \\
					0&0_{\boldsymbol{A}}&m_{2}\\
				\end{block}
			\end{blockarray},    \  \
			\boldsymbol{PBU}=
			\begin{blockarray}{cc}
				p\\
				\begin{block}{(c)c}
					{\sf \Sigma}_{\boldsymbol{B}}&m-k_{2}  \\ 
					0_{\boldsymbol{B}}&k_{2}  \\
				\end{block}
			\end{blockarray},  \\
			\boldsymbol{VCQ}&=
			\begin{blockarray}{ccc}
				n-k_{1} &k_{1}\\ 
				\begin{block}{(cc)c}
					{\sf \Sigma}_{\boldsymbol{C}}&0_{\boldsymbol{C}}&q  \\ 
				\end{block}
			\end{blockarray},
		\end{align}
	\end{subequations}
	where $m_1+m_2=m$, and 
	\begin{subequations}\label{DQRSVD}
		\begin{align*}
			{\sf \Sigma}_{\boldsymbol{A}}&=\begin{blockarray}{cccccc}
				j+k+l &r&s&t&\hat{s}_{1}\\ 
				\begin{block}{(ccccc)c}
					I&0&0&0&0&j+k+l  \\ 
					0&I&0&0&0&r  \\
					0&0&{\sf S}_{\boldsymbol{A}}&0&0&s\\
					0&0&0&\widetilde{S}_{\boldsymbol{A}}\epsilon&0&t\\
					0&0&0&0&0&\hat{t}\\
				\end{block}
			\end{blockarray}, 
			\end{align*}
		\end{subequations}
	\begin{subequations}
	\begin{align*}
			{\sf \Sigma}_{\boldsymbol{B}} &=
			\begin{blockarray}{cccc}
				p-s-\hat{t} &s&t+\hat{t}\\ 
				\begin{block}{(ccc)c}
					\begin{pmatrix}
						I&0&0\\
						0&I\epsilon&0\\
						0&0&0
					\end{pmatrix}&0&0&j+k+l  \\ 
					\hat{\bf B}\epsilon&0&0&r\\
					0&{\sf S}_{\boldsymbol{B}}&0&s  \\
					0&0&I&t+\hat{t}\\
				\end{block}
			\end{blockarray}, \\
			{\sf \Sigma}_{\boldsymbol{C}}
			&=\begin{blockarray}{cccccc}
				j+k+l &r&s&t&\hat{s}_{1}\\ 
				\begin{block}{(ccccc)c}
					\hat{\bf C}_{1}\epsilon&0&0&0&0&q-r-s-t-\hat{s}_{1}  \\ 
					0&I&0&0&0&r\\
					0&0&{\sf S}_{\boldsymbol{C}}&0&0&s\\
					0&0&0&I&0&{t}\\
					0&0&0&0&I&\hat{s}_{1}\\
				\end{block}
			\end{blockarray}.
		\end{align*}
	\end{subequations}
	The submatrices $\hat{\bf B}\epsilon$, $\hat{\bf C}_1\epsilon$ are given by \eqref{subAX}, ${\widetilde S}_{\boldsymbol{A}}={\rm{diag}}(d_{i})$ is a real matrix,  ${\sf S}_{\boldsymbol{A}}={\rm{diag}}(\alpha_{i})$, ${\sf S}_{\boldsymbol{B}}={\rm{diag}}(\beta_{i})$, ${\sf S}_{\boldsymbol{C}}={\rm{diag}}(\gamma_{i})$ are dual matrices, and
	\begin{equation}\label{RSVDsq}
		\begin{split}
			\alpha_{i}^{2}+\beta_{i}^{2}+\gamma_{i}^{2}=1, \ \    i&=\lambda+1,\ldots,\lambda+s, \\
			\lambda&=j+k+l+r.
		\end{split}
	\end{equation}
	Furthermore, 
	\begin{equation}\label{RSVDEQ1}
		1>\alpha_{i}\ge \alpha_{i+1}>0,\quad0<\beta_{i}\leq\beta_{i+1}<1,\quad1>\gamma_{i}\ge \gamma_{i+1}>0
	\end{equation}
	and 
	\begin{equation}{\label{RSVDEQ2}}
		\frac{\alpha_{i}}{\beta_{i}\gamma_{i}}\ge\frac{\alpha_{i+1}}{\beta_{i+1}\gamma_{i+1}},\quad i=\lambda+1,\ldots,\lambda+s-1.
	\end{equation}
\end{theorem}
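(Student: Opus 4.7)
The plan is to construct $\boldsymbol{P},\boldsymbol{Q},\boldsymbol{U},\boldsymbol{V}$ as accumulated products of successively applied transformations drawn from the QR decomposition (Lemma \ref{QRT}), the DQSVD (Lemma \ref{lm:4.1}), the two DQGSVDs (Lemmas \ref{thm:4.3} and \ref{DDQGSVD}), and the infinitesimal unitary of Lemma \ref{DM1}. Since $\boldsymbol{P},\boldsymbol{Q}$ are permitted to be merely nonsingular (as is standard for RSVD), while $\boldsymbol{U},\boldsymbol{V}$ must be unitary, the roles of these factors are asymmetric and this guides which lemma is applied at each stage.

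First I would pre-reduce $\boldsymbol{B}$ and $\boldsymbol{C}$ independently. Apply the DQSVD to $\boldsymbol{B}$ to obtain a unitary $\boldsymbol{U}$ on the column side and a nonsingular $\boldsymbol{P}^{(1)}$ on the row side so that $\boldsymbol{P}^{(1)}\boldsymbol{B}\boldsymbol{U}$ has the three-tier (appreciable, infinitesimal, zero) block form. Apply the DQSVD to $\boldsymbol{C}$ to obtain a unitary $\boldsymbol{V}$ on its row side and a nonsingular $\boldsymbol{Q}^{(1)}$ on its column side. The two reductions do not interfere because $\boldsymbol{P}^{(1)}$ acts only on the $m$-dimension shared with $\boldsymbol{A}$'s rows and $\boldsymbol{Q}^{(1)}$ acts only on the $n$-dimension shared with $\boldsymbol{A}$'s columns. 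Consequently $\boldsymbol{P}^{(1)}\boldsymbol{A}\boldsymbol{Q}^{(1)}$ inherits a natural block partition dictated by the rank tiers of $\boldsymbol{B}$ (on rows) and $\boldsymbol{C}$ (on columns).

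Next I would reduce $\boldsymbol{A}$ within this partition. The blocks of $\boldsymbol{A}$ that lie opposite to zero blocks of $\boldsymbol{B}$ or $\boldsymbol{C}$ are uncoupled and can be simplified by plain DQSVD, absorbing the transformations into the unused rows/columns of $\boldsymbol{P}$ and $\boldsymbol{Q}$ without disturbing what has already been achieved for $\boldsymbol{B}$ and $\boldsymbol{C}$. The central block of $\boldsymbol{A}$ that must couple simultaneously with the identity pieces of both $\boldsymbol{B}$ and $\boldsymbol{C}$ is treated by first invoking DQGSVD1 on the pair (central block of $\boldsymbol{A}$, identity block of $\boldsymbol{B}$), and then invoking DQGSVD2 on the pair (resulting matrix, identity block of $\boldsymbol{C}$). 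The two successive cosine-sine relations supplied by these decompositions combine to give the three-term identity $\alpha_i^2+\beta_i^2+\gamma_i^2=1$ in \eqref{RSVDsq}; a final permutation enforces the monotone ordering of $\alpha_i/(\beta_i\gamma_i)$ in \eqref{RSVDEQ2}.

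Finally, residual infinitesimal off-diagonal blocks, which manifest as $\hat{\boldsymbol{B}}\epsilon$, $\hat{\boldsymbol{C}}_1\epsilon$, and $\widetilde{S}_{\boldsymbol{A}}\epsilon$ in the stated form, are handled by composing with the unitary of Lemma \ref{DM1}; this unitary is designed to eliminate purely infinitesimal off-diagonal blocks without perturbing any standard part. The main obstacle, I expect, is Step two: the successive application of DQGSVD1 and DQGSVD2 to overlapping subblocks must be orchestrated so that each later reduction preserves the canonical form established by the earlier one, and one must verify that the eight rank indices $(j,k,l,r,s,t,\hat{s}_1,\hat{t})$ emerging from this chain of reductions assemble consistently into the block sizes claimed in the theorem. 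Checking that the three-term CS identity extends faithfully to the dual (not merely quaternion) level, using the appreciability and infinitesimal conventions of Section \ref{section2}, is the final technical point.
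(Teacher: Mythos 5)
Your high-level plan (first diagonalize $\boldsymbol{B}$ and $\boldsymbol{C}$ separately by DQSVD, then reduce $\boldsymbol{A}$ inside the induced block partition) is a legitimate alternative to the paper's route, which instead applies DQGSVD2 first to the column-sharing pair $(\boldsymbol{A},\boldsymbol{C})$, then to the conjugate transpose of a pair built from a block of the updated $\boldsymbol{A}$ and a block of $\boldsymbol{B}$, then performs blocked eliminations and a DQSVD of a remaining block of $\boldsymbol{B}$. However, your central step has a genuine gap. Lemmas \ref{thm:4.3} and \ref{DDQGSVD} apply to two matrices stacked over a common column dimension; ``the pair (central block of $\boldsymbol{A}$, identity block of $\boldsymbol{B}$)'' shares no such dimension as stated, and a GSVD taken against an identity block degenerates to an ordinary SVD, so no new cosine--sine structure arises there. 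More importantly, once $\boldsymbol{B}$ and $\boldsymbol{C}$ are frozen in identity form, the only transformations of the rows and columns of the central block of $\boldsymbol{A}$ that preserve those forms are unitary ones: any non-unitary factor produced by DQGSVD1/DQGSVD2 would have to be compensated through $\boldsymbol{U}$ or $\boldsymbol{V}$, which must remain unitary. Hence that block can only be diagonalized, and the identity \eqref{RSVDsq} does not simply ``combine'' from two CS relations; it has to be manufactured by an explicit diagonal rescaling absorbed into the nonsingular $\boldsymbol{P}$ and $\boldsymbol{Q}$, exactly as the paper does in \eqref{eq:afbtgm} and \eqref{eq:Sabc}, where $\alpha_{\lambda+i}=\theta_i^2/(1+\theta_i^2)^{1/2}$, $\beta_{\lambda+i}=\delta_i$, $\gamma_{\lambda+i}=\theta_i/(1+\theta_i^2)^{1/2}$ and ${\sf S}_{\boldsymbol{A}}={\sf S}_{\boldsymbol{A}}^{(2)}{\sf S}_{\boldsymbol{C}}$, coming from a \emph{single} relation $\theta_i^2+\delta_i^2=1$. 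Your proposal never performs this renormalization, so \eqref{RSVDsq}--\eqref{RSVDEQ2} are not actually established.

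The closing paragraph also misreads the role of the infinitesimal blocks. The submatrices $\hat{\bf B}\epsilon$, $\hat{\bf C}_1\epsilon$ and $\widetilde{S}_{\boldsymbol{A}}\epsilon$ are not residues to be cleaned up by the unitary of Lemma \ref{DM1}; they are unavoidable constituents of the canonical form, pinned down in \eqref{subAX}, and they persist precisely because infinitesimal dual numbers are not invertible, so these couplings cannot be scaled away or eliminated against zero standard parts. A Lemma \ref{DM1}-type unitary can only cancel an infinitesimal block against an adjacent appreciable identity block, and in doing so it creates new $\epsilon$-entries elsewhere that must be tracked and absorbed (this is exactly the bookkeeping carried out by the paper's $\boldsymbol{V}^{(3)}$ and the subsequent blocked eliminations in Steps 3--4). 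Without that bookkeeping, and without the compatibility analysis above, you cannot justify the precise block placement in ${\sf \Sigma}_{\boldsymbol{A}}$, ${\sf \Sigma}_{\boldsymbol{B}}$, ${\sf \Sigma}_{\boldsymbol{C}}$ or show that the indices $(j,k,l,r,s,t,\hat{s}_1,\hat{t})$ assemble as claimed.
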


\begin{proof}
	The proof is performed in four steps. The transformations are represented to be the following recursive relations
	\begin{equation}\label{updateeq}
		\begin{split}
		\boldsymbol{A}^{(k+1)}&=\boldsymbol{P}^{(k)}\boldsymbol{A}^{(k)}\boldsymbol{Q}^{(k)},  \
		\boldsymbol{B}^{(k+1)}=\boldsymbol{P}^{(k)}\boldsymbol{B}^{(k)}\boldsymbol{U}^{(k)},   \\
		\boldsymbol{C}^{(k+1)}&=\boldsymbol{V}^{(k)}\boldsymbol{C}^{(k)}\boldsymbol{Q}^{(k)},
	\end{split}
	\end{equation}
	where $\boldsymbol{P}^{(k)}\in {\bf \mathbb{DQ}}^{m \times m}$ and $\boldsymbol{Q}^{(k)}\in {\bf \mathbb{DQ}}^{n \times n}$, $\boldsymbol{U}^{(k)}\in {\bf \mathbb{DQ}}^{p \times p}$ and $\boldsymbol{V}^{(k)}\in {\bf \mathbb{DQ}}^{q \times q}$ are unitary. In each step, we only specify $\boldsymbol{P}^{(k)}$, $\boldsymbol{Q}^{(k)}$, $\boldsymbol{U}^{(k)}$, $\boldsymbol{V}^{(k)}$ and the updated matrices $\boldsymbol{A}^{(k+1)}$, $\boldsymbol{B}^{(k+1)}$ and $\boldsymbol{C}^{(k+1)}$. We preset
	\begin{equation*}
		\boldsymbol{A}^{(1)}=\boldsymbol{A},\ \boldsymbol{B}^{(1)}=\boldsymbol{B},\  \boldsymbol{C}^{(1)}=\boldsymbol{C}.
	\end{equation*}
	
	Step 1. From \cref{DDQGSVD}, the DQGSVD2 of $\begin{pmatrix}
		\boldsymbol{A}^{(1)}\\
		\boldsymbol{C}^{(1)}
	\end{pmatrix}$ 
	has the form	
	
	\begin{align*}
		&\boldsymbol{U}_{1}\boldsymbol{A}^{(1)}\boldsymbol{Q}_{1} = ({\sf \Sigma}_{\boldsymbol{A}}^{(1)},0)	\\
		&=\small
		\begin{blockarray}{cccccc}
			j+k+l &r_1&r_2&r_3&n-(j+k+l+r_{1}+r_{2}+r_{3})\\ 
			\begin{block}{(cccc|c)c}
				I&0&0&0&0&j+k+l  \\ 
				0&{\sf S}_{\boldsymbol{A}}^{(1)}&0&0&0&r_1\\
				0&0&\widetilde{S}_{\boldsymbol{A}}^{(1)}\epsilon&0&0&r_2\\
				0&0&0&0&0&m-(j+k+l+r_1+r_2)\\
			\end{block}
		\end{blockarray},\\
		& \boldsymbol{V}_{1}\boldsymbol{C}^{(1)}\boldsymbol{Q}_{1} = ({\sf \Sigma}_{\boldsymbol{C}}^{(1)},0)\\
		&=\small
		\begin{blockarray}{cccccc}
			j+k+l &r_1&r_2&r_3&n-(j+k+l+r_{1}+r_{2}+r_{3})\\ 
			\begin{block}{(cccc|c)c}
				\begin{pmatrix}
					{\Sigma}_{c}^{(1)}\epsilon&0\\
					0&0
				\end{pmatrix}&0&0&0&0&q-r_1-r_2-r_3  \\ 
				0&{\sf S}_{\boldsymbol{C}}^{(1)}&0&0&0&r_1\\
				0&0&I&0&0&r_2\\
				0&0&0&I&0&r_3\\
			\end{block}
		\end{blockarray},	
	\end{align*}
	where $\boldsymbol{U}_{1}, \boldsymbol{V}_{1}$ are both unitary dual quaternion matrices, and $r_1+r_2+r_3=r+s+t_1+t_2$.
	
	Let
	\begin{equation*}
		\begin{split}
			\boldsymbol{P}^{(1)}&=\boldsymbol{U}_{1}\in \mathbb{DQ}^{m \times m},\quad\boldsymbol{Q}^{(1)}=\boldsymbol{Q}_{1}{\rm diag}(I_{j+k+l},(S_{\boldsymbol{C}}^{(1)})^{-1},I_{n-(j+k+l+r_{1})}),  \\
			\boldsymbol{U}^{(1)}&=I_p,\quad\boldsymbol{V}^{(1)}=\boldsymbol{V}_{1} \in \mathbb{DQ}^{q \times q}.
		\end{split}
	\end{equation*}
	Then the dual quaternion matrices $\boldsymbol{A}^{(1)}, \boldsymbol{B}^{(1)}, \boldsymbol{C}^{(1)}$ are updated to be
	\begin{align*}
		\boldsymbol{A}^{(2)}&=\boldsymbol{P}^{(1)}\boldsymbol{A}^{(1)}\boldsymbol{Q}^{(1)}\\
		&= \tiny
		\begin{blockarray}{cccccc}
			j+k+l &r_1&r_2&r_3&n-(j+k+l+r_{1}+r_{2}+r_{3})\\ 
			\begin{block}{(cccc|c)c}
				I&0&0&0&0&j+k+l  \\ 
				0&{\sf S}_{\boldsymbol{A}}^{(1)}({\sf S}_{\boldsymbol{C}}^{(1)})^{-1}&0&0&0&r_1\\
				0&0&\widetilde{S}_{\boldsymbol{A}}^{(1)}\epsilon&0&0&r_2\\
				0&0&0&0&0&m-(j+k+l+r_1+r_2)\\
			\end{block}
		\end{blockarray}, \\
		\boldsymbol{B}^{(2)}&=\boldsymbol{P}^{(1)}\boldsymbol{B}^{(1)}\boldsymbol{U}^{(1)}=
		\begin{blockarray}{cc}
			p&  \\
			\begin{block}{(c)c}
				\boldsymbol{B}_{1}^{(2)}&j+k+l  \\ 
				\boldsymbol{B}_{2}^{(2)}&m-(j+k+l)  \\
			\end{block}
		\end{blockarray}, 
			\end{align*}
			\begin{align*}
		\boldsymbol{C}^{(2)}&=\boldsymbol{V}^{(1)}\boldsymbol{C}^{(1)}\boldsymbol{Q}^{(1)}\\
		&= \small
		\begin{blockarray}{cccccc}
			j+k+l &r_1&r_2&r_3&n-(j+k+l+r_{1}+r_{2}+r_{3})\\ 
			\begin{block}{(cccc|c)c}
				\begin{pmatrix}
					{\Sigma}_{c}^{(1)}\epsilon&0\\
					0&0
				\end{pmatrix}&0&0&0&0&q-r_1-r_2-r_3  \\ 
				0&I&0&0&0&r_1\\
				0&0&I&0&0&r_2\\
				0&0&0&I&0&r_3\\
			\end{block}
		\end{blockarray}.
	\end{align*}
	
	Step 2. Notice that the following blocked matrix 
	\begin{equation}\label{submx}
		\begin{pmatrix}
			\begin{pmatrix}
				{\sf S}_{\boldsymbol{A}}^{(1)}({\sf S}_{\boldsymbol{C}}^{(1)})^{-1}&0\\
				0&\widetilde{S}_{\boldsymbol{A}}^{(1)}\epsilon\\
				0&0
			\end{pmatrix},&\boldsymbol{B}_{2}^{(2)}
		\end{pmatrix} \in\mathbb{DQ}^{(m-j-k-l)  \times (r_1+r_2+r_3+p)}
	\end{equation}
	is formed by the submatrices taking from $\boldsymbol{A}^{(2)}$ and $\boldsymbol{B}^{(2)}$.
	Applying \cref{DDQGSVD} once again but to the conjugate transpose of \eqref{submx}, there exist unitary dual quaternion matrices $\boldsymbol{U}_{2}\in\mathbb{DQ}^{p\times p}$, $\boldsymbol{Q}_{2}\in\mathbb{DQ}^{(r_1+r_2+r_3)\times (r_1+r_2+r_3)}$
	and a dual quaternion matrix $\boldsymbol{P}_{2} \in\mathbb{DQ}^{(m-j-k-l)  \times (m-j-k-l)}$ such that
	\begin{align*}
		\boldsymbol{P}_{2}\begin{pmatrix}
			{\sf S}_{\boldsymbol{A}}^{(1)}({\sf S}_{\boldsymbol{C}}^{(1)})^{-1}&0\\
			0&\widetilde{S}_{\boldsymbol{A}}^{(1)}\epsilon\\
			0&0
		\end{pmatrix}\boldsymbol{Q}_{2}=
		\begin{blockarray}{cccc}
			r &s&t\\ 
			\begin{block}{(ccc)c}
				I&0&0&r  \\ 
				0&{\sf S}_{\boldsymbol{A}}^{(2)}&0&s  \\
				0&0&\widetilde{S}_{\boldsymbol{A}}^{(2)}\epsilon&t\\
				0&0&0&\hat{t}\\
				0&0&0&t_{1}\\
			\end{block}
		\end{blockarray},\\
		\boldsymbol{P}_{2}\boldsymbol{B}_{2}^{(2)}\boldsymbol{U}_{2}=
		\begin{blockarray}{cccc}
			p-s-t&s&t+\hat{t}\\ 
			\begin{block}{(ccc)c}
				\begin{pmatrix}
					{\Sigma}_{b}^{(2)}\epsilon&0\\
					0&0
				\end{pmatrix}&0&0&r  \\ 
				0&{\sf S}_{\boldsymbol{B}}^{(2)}&0&s  \\
				0&0&I&t+\hat{t}\\
				0&0&0&t_{1}\\
			\end{block}
		\end{blockarray},
	\end{align*}
	where ${\sf S}_{\boldsymbol{A}}^{(2)}={\rm{diag}}(\theta_{1},\ldots,\theta_{s})$, 
	${\sf S}_{\boldsymbol{B}}^{(2)}={\rm{diag}}(\delta_{1},\ldots,\delta_{s})$ and $\theta_{i}^{2}+\delta_{i}^{2}=1$, $1>\theta_{1}\ge\theta_{2}\ge\cdots \ge \theta_{s}>0$, $0<\delta_{1}\leq\delta_{2}\leq\cdots\leq\delta_{s}<1$. 
	Notice that this is the conjugate transpose of the results after performing DQGSVD2 in \cref{DDQGSVD}.
	
	Let
	\begin{equation}\label{eq:PQUV2}
		\begin{split}
			\boldsymbol{P}^{(2)}&={\rm diag}(I_{j+k+l},\boldsymbol{P}_{2}),\quad \boldsymbol{Q}^{(2)}={\rm diag}(I_{j+k+l},\boldsymbol{Q}_{2},I_{n-j-k-l-r-s-t}),  \\
			\boldsymbol{U}^{(2)}&=\boldsymbol{U}_{2}\in \mathbb{DQ}^{p\times p}, \quad
			\boldsymbol{V}^{(2)}={\rm diag}(I_{j+k+l},\boldsymbol{Q}_{2}^{*}).
		\end{split}
	\end{equation}
	Then the dual quaternion matrices $\boldsymbol{A}^{(2)}, \boldsymbol{B}^{(2)}, \boldsymbol{C}^{(2)}$ are updated to be
	\begin{align*}
		\boldsymbol{A}^{(3)}&=\boldsymbol{P}^{(2)}\boldsymbol{A}^{(2)}\boldsymbol{Q}^{(2)}
		=\begin{blockarray}{cccccc}
			j+k+l &r&s&t&h\\ 
			\begin{block}{(cccc|c)c}
				I&0&0&0&0&j+k+l  \\ 
				0&I&0&0&0&r\\
				0&0&{\sf S}_{\boldsymbol{A}}^{(2)}&0&0&s\\
				0&0&0&\widetilde{S}_{\boldsymbol{A}}^{(2)}\epsilon&0&t\\
				0&0&0&0&0&t_{1}+\hat{t}\\
			\end{block}
		\end{blockarray},
	\end{align*}
	\begin{align*}
		\boldsymbol{B}^{(3)}&=\boldsymbol{P}^{(2)}\boldsymbol{B}^{(2)}\boldsymbol{U}^{(2)}
		=
		\begin{blockarray}{cccc}
			p-s-t-\hat{t}&s&t+\hat{t}\\ 
			\begin{block}{(ccc)c}
				\boldsymbol{B}_{1}^{(3)}&\boldsymbol{B}_{2}^{(3)}&\boldsymbol{B}_{3}^{(3)}&j+k+l\\
				\begin{pmatrix}
					\Sigma_{b}^{(2)}\epsilon&0\\
					0&0
				\end{pmatrix}&0&0&r  \\ 
				0&{\sf S}_{\boldsymbol{B}}^{(2)}&0&s  \\
				0&0&I&t+\hat{t}\\
				0&0&0&t_{1}\\
			\end{block}
		\end{blockarray},  \\
		\boldsymbol{C}^{(3)}&=\boldsymbol{C}^{(2)}.
	\end{align*}

	Step 3. In this step, we perform blocked elementary transformations to update $\boldsymbol{A}^{(3)}, \boldsymbol{B}^{(3)}$ and $\boldsymbol{C}^{(3)}$, by the aid of the following defined  blocked elementary dual quaternion matrices
	\begin{align*}
		\boldsymbol{P}^{(3)}&=
		\begin{blockarray}{cccccc}
			j+k+l&r&s&t+\hat{t}&t_{1}\\ 
			\begin{block}{(ccccc)c}
				I&0&-\boldsymbol{B}_{2}^{(3)}({\sf S}_{\boldsymbol{B}}^{(2)})^{-1}&-\boldsymbol{B}_{3}^{(3)}&0&j+k+l\\
				0&I&0&0&0&r\\
				0&0&I&0&0&s\\
				0&0&0&I&0&t+\hat{t}\\
				0&0&0&0&I&t_{1}\\
			\end{block}
		\end{blockarray},  \\
		\boldsymbol{Q}^{(3)}&=
		\begin{blockarray}{cccccc}
			j+k+l&r&s&t+\hat{t}&h\\ 
			\begin{block}{(ccccc)c}
				I&0&\boldsymbol{B}_{2}^{(3)}({\sf S}_{\boldsymbol{B}}^{(2)})^{-1}{\sf S}_{\boldsymbol{A}}^{(2)}&\boldsymbol{B}_{3}^{(3)}\begin{pmatrix}
					\widetilde{S}_{\boldsymbol{A}}^{(2)}\epsilon\\
					0
				\end{pmatrix}&0&j+k+l\\
				0&I&0&0&0&r\\
				0&0&I&0&0&s\\
				0&0&0&I&0&t\\
				0&0&0&0&I&h\\
			\end{block}
		\end{blockarray},
			\end{align*}
		and unitary dual quaternion matrices
			\begin{align*}
		\boldsymbol{U}^{(3)}&=I_p,\\
		 \boldsymbol{V}^{(3)}&=\begin{blockarray}{ccccc}
			q-r-s-t&r&s&t\\
			\begin{block}{(cccc)c}
				I&0&-\widetilde{\bf C}_{1}\epsilon&0&q-r-s-t\\
				0&I&0&0&r\\
				\widetilde{\bf C}_{1}^{*}\epsilon&0&I&0&s\\
				0&0&0&I&t\\
			\end{block}
		\end{blockarray},
		\end{align*}
	where 
	\begin{equation*}
		\widetilde{\bf {C}}_{1}\epsilon=\begin{pmatrix}
			{\Sigma}_{c}^{(1)}\epsilon&0\\
			0&0
		\end{pmatrix}\boldsymbol{B}_{2}^{(3)}({\sf S}_{\boldsymbol{B}}^{(2)})^{-1}{\sf S}_{\boldsymbol{A}}^{(2)}.
	\end{equation*}
	Then, a series of blocked elementary transformations yield
	\begin{align*}
		\boldsymbol{A}^{(4)}&=\boldsymbol{P}^{(3)}\boldsymbol{A}^{(3)}\boldsymbol{Q}^{(3)}
		=\begin{blockarray}{cccccc}
			j+k+l &r&s&t&h\\ 
			\begin{block}{(cccc|c)c}
				I&0&0&0&0&j+k+l  \\ 
				0&I&0&0&0&r\\
				0&0&{\sf S}_{\boldsymbol{A}}^{(2)}&0&0&s\\
				0&0&0&\widetilde{S}_{\boldsymbol{A}}^{(2)}\epsilon&0&t\\
				0&0&0&0&0&t_{1}+\hat{t}\\
			\end{block}
		\end{blockarray},  
			\end{align*}
			\begin{align*}		
		\boldsymbol{B}^{(4)}&=\boldsymbol{P}^{(3)}\boldsymbol{B}^{(3)}\boldsymbol{U}^{(3)}
		=\begin{blockarray}{cccc}
			p-s-t&s&t\\ 
			\begin{block}{(ccc)c}
				\boldsymbol{B}_{1}^{(3)}&0&0&j+k+l\\
				\begin{pmatrix}
					{\Sigma}_{b}^{(2)}\epsilon&0\\
					0&0
				\end{pmatrix}&0&0&r  \\ 
				0&{\sf S}_{\boldsymbol{B}}^{(2)}&0&s  \\
				0&0&I&t+\hat{t}\\
				0&0&0&t_{1}\\
			\end{block}
		\end{blockarray},\\
%	\end{align*}
%	\begin{align*}
		\boldsymbol{C}^{(4)}&=\boldsymbol{V}^{(3)}\boldsymbol{C}^{(3)}\boldsymbol{Q}^{(3)}
		=\begin{blockarray}{cccccc}
			j+k+l &r&s&t&h\\ 
			\begin{block}{(cccc|c)c}
				\begin{pmatrix}
					{\Sigma}_{c}^{(1)}\epsilon&0\\
					0&0
				\end{pmatrix}&0&0&0&0&q-r-s-t  \\ 
				0&I&0&0&0&r\\
				0&0&I&0&0&s\\
				0&0&0&I&0&t\\
			\end{block}
		\end{blockarray}.
	\end{align*}

	Step 4. Perform the DQSVD of $\boldsymbol{B}_{1}^{(3)}$ that is the $(1,1)$-block submatrix of $\boldsymbol{B}^{(4)}$. There exist unitary dual quaternion matrices $\boldsymbol{U}_{3}\in \mathbb{DQ}^{(j+k+l)\times (j+k+l)}$ and $\boldsymbol{V}_{3}\in \mathbb{DQ}^{(p-\alpha-r)\times (p-\alpha-r)}$ such that
	\begin{align*}
		\boldsymbol{U}_{3}\boldsymbol{B}_{1}^{(3)}\boldsymbol{V}_{3}=
		\begin{blockarray}{cccc}
			\begin{block}{(ccc)c}
				{\sf \Sigma}_{\boldsymbol{B}_{1}^{(3)}}&0&0&j  \\ 
				0&\hat{\Sigma}_{\boldsymbol{B}_{1}^{(3)}}\epsilon&0&k\\
				0&0&0&l\\
			\end{block}
		\end{blockarray}.
	\end{align*}
	
	Let $\lambda=j+k+l+r$ and 
	\begin{equation}\label{eq:afbtgm} 
		\alpha_{\lambda+i}=\frac{\theta_{i}^{2}}{(1+\theta_{i}^{2})^{1/2}}, \quad
		\beta_{\lambda+i}=\delta_{i}, \quad
		\gamma_{\lambda+i}=\frac{\theta_{i}}{(1+\theta_{i}^{2})^{1/2}}.
	\end{equation}
	It is easy to verify that $\{\alpha_{\lambda+i}\}$, $\{\beta_{\lambda+i}\}$, $\{\gamma_{\lambda+i}\}$ satisfy \eqref{RSVDsq}-\eqref{RSVDEQ2}.
	
	Let
	\begin{subequations}\label{PQUV}
		\begin{align}
			\label{eq:Sabc}
			{\sf S}_{\boldsymbol{C}}&={\rm{diag}}(\gamma_{\lambda+i}), \quad {\sf S}_{\boldsymbol{A}}={\sf S}_{\boldsymbol{A}}^{(2)}{\sf S}_{\boldsymbol{C}}, \quad {\sf S}_{\boldsymbol{B}}={\sf S}_{\boldsymbol{B}}^{(2)}, \\
			\label{eq:P4}
			\boldsymbol{P}^{(4)} &={\rm{diag}}(({\sf \Sigma}_{\boldsymbol{B}_{1}^{(3)}})^{-1},(\hat{\Sigma}_{\boldsymbol{B}_{1}^{(3)}})^{-1},I_{m-j-k}){\rm{diag}}(\boldsymbol{U}_{3},I_{m-j-k-l}),  \\
			\label{eq:Q4}
			\boldsymbol{Q}^{(4)}&={\rm{diag}}(I_{j+k+l+r},{\sf S}_{\boldsymbol{C}},I_{n-j-k-l-r-s}){\rm{diag}}({\sf \Sigma}_{\boldsymbol{B}_{1}^{(3)}},\hat{\Sigma}_{\boldsymbol{B}_{1}^{(3)}},I_{n-j-k}) {\rm{diag}}(\boldsymbol{U}_{3}^{*},I_{n-j-k-l}), \\
			\label{eq:U4} \boldsymbol{U}^{(4)}&={\rm{diag}}(\boldsymbol{V}_{3},I_{\alpha+r}), \quad  \boldsymbol{V}^{(4)}=I_q.
		\end{align}
	\end{subequations}
	Obviously, the matrices in \eqref{eq:Sabc} are diagonal, those in \eqref{eq:P4}-\eqref{eq:Q4} are nonsingular, and those in \eqref{eq:U4} are unitary. Applying \eqref{eq:P4}-\eqref{eq:U4} to \eqref{updateeq} we obtain the results as stated in \eqref{RSVDresu1}-\eqref{RSVDEQ2}, and
		\begin{align}\label{subAX}
			\hat{\bf B}\epsilon &=\begin{pmatrix}
				\Sigma_{b}^{(2)}\epsilon&0\\
				0&0
			\end{pmatrix}\boldsymbol{V}_{3},\quad
			\hat{\bf C}_{1}\epsilon =\begin{pmatrix}
				{\Sigma}_{c}^{(1)}\epsilon&0\\
				0&0
			\end{pmatrix}{\rm{diag}}({\sf \Sigma}_{\boldsymbol{B}_{x}},\hat{\Sigma}_{\boldsymbol{B}_{x}},I_{l})\boldsymbol{U}_{3}^{*}.	
		\end{align}
	We then complete the proof.
\end{proof}

The dual quaternion matrices $\boldsymbol{P}\in {\bf \mathbb{DQ}}^{m \times m}$ and $\boldsymbol{Q}\in {\bf \mathbb{DQ}}^{n \times n}$ in \cref{DQRSVD111} are not ensured to be nonsingular. If DQGSVD2 are replaced with DQGSVD1 in the proof of \cref{DQRSVD111}, then we can obtain the other form of the DQRSVD where the dual quaternion matrices  $\boldsymbol{P}, \boldsymbol{Q}$ are nonsingular, as stated in the following theorem.

\begin{theorem}[DQRSVD2]\label{RSVD2}
	Let $\boldsymbol{A} \in {\bf \mathbb{DQ}}^{m \times n}$, $\boldsymbol{B} \in ^{m \times p}$, and $\boldsymbol{C} \in {\bf \mathbb{DQ}}^{q \times n}$. There exist nonsingular dual quaternion matrices $\boldsymbol{P}\in {\bf \mathbb{DQ}}^{m \times m}$, $\boldsymbol{Q}\in {\bf \mathbb{DQ}}^{n \times n}$, and unitary dual quaternion matrices $\boldsymbol{U}\in {\bf \mathbb{DQ}}^{p\times p}$, $\boldsymbol{V}\in {\bf \mathbb{DQ}}^{q\times q}$ such that
	\begin{subequations}\label{Thm42ABC}
		\begin{align}
			\boldsymbol{PAQ}&=
			\begin{blockarray}{ccc}
				n-a_{1} &a_{1}\\ 
				\begin{block}{(cc)c}
					{\sf \Sigma}_{\boldsymbol{A}}& 0  &m_{1}  \\
					0&\begin{pmatrix}
						\Sigma_{\boldsymbol{A}}\epsilon&0\\
						0&0
					\end{pmatrix}\\
				\end{block}
			\end{blockarray},    \  \
			\boldsymbol{PBU}=
			\begin{blockarray}{cc}
				p\\
				\begin{block}{(c)c}
					{\sf \Sigma}_{\boldsymbol{B}}&m-b_{2}-b_{3}  \\ 
					\widetilde{\bf B}\epsilon&b_{2}\\
					0_{\boldsymbol{B}}&b_{3}  \\
				\end{block}
			\end{blockarray},  \\
			\boldsymbol{VCQ}&=
			\begin{blockarray}{cccc}
				n-c_{1} &c_{1}&c_{2}\\ 
				\begin{block}{(ccc)c}
					{\sf \Sigma}_{\boldsymbol{C}}&\widetilde{\bf C}\epsilon&0_{\boldsymbol{C}}&q  \\ 
				\end{block}
			\end{blockarray},
		\end{align}
	\end{subequations}
	where
	%\begin{subequations}\label{matxSigma}
		\begin{align*}
			{\sf \Sigma}_{\boldsymbol{A}}&=\begin{blockarray}{cccccc}
				j+k+l &r&s&t&\hat{s}_{1}\\ 
				\begin{block}{(ccccc)c}
					I&0&0&0&0&j+k+l  \\ 
					0&I&0&0&0&r  \\
					0&0&{\sf S}_{\boldsymbol{A}}&0&0&s\\
					0&0&0&\widetilde{S}_{\boldsymbol{A}}\epsilon&0&t\\
					0&0&0&0&0&\hat{t}\\
				\end{block}
			\end{blockarray},  
			{\sf \Sigma}_{\boldsymbol{B}} =
			\begin{blockarray}{cccc}
				p-s-\hat{t} &s&\hat{t}\\ 
				\begin{block}{(ccc)c}
					\begin{pmatrix}
						I&0&0\\
						0&I\epsilon&0\\
						0&0&0
					\end{pmatrix}&0&0&j+k+l  \\ 
					\hat{\bf B}\epsilon&0&0&r\\
					0&{\sf S}_{\boldsymbol{B}}&0&s  \\
					0&0&I&t+\hat{t}\\
				\end{block}
			\end{blockarray}  
		\end{align*}
	    \begin{align*}
			{\sf \Sigma}_{\boldsymbol{C}}
			&=\begin{blockarray}{cccccc}
				j+k+l &r&s&\hat{t}&{s}_{1}\\ 
				\begin{block}{(ccccc)c}
					\hat{\bf C}_{1}\epsilon&0&0&0&0&q-r-s-t-\hat{s}_{1}  \\ 
					0&I&0&0&0&r\\
					0&0&I&0&0&s\\
					0&0&0&I&0&\hat{t}\\
					0&0&0&0&I&{s}_{1}\\
				\end{block}
			\end{blockarray},
		\end{align*}
	%\end{subequations}
	and the submatrices $\widetilde{{\bf B}}$, $\widetilde{{\bf C}}$, $\hat{\bf B}$, $\hat{\bf C}_1$ are general quaternion matrices, $\hat{\bf C}_1\epsilon$ is given by \eqref{subAX}, ${\widetilde S}_{\boldsymbol{A}}={\rm{diag}}(d_{i})$ is a real matrix,  ${\sf S}_{\boldsymbol{A}}={\rm{diag}}(\alpha_{i})$, ${\sf S}_{\boldsymbol{B}}={\rm{diag}}(\beta_{i})$, ${\sf S}_{\boldsymbol{C}}={\rm{diag}}(\gamma_{i})$ are dual matrices, and $\alpha_i, \beta_i, \gamma_i$ satisfy \eqref{RSVDsq}-\eqref{RSVDEQ2}.  
\end{theorem}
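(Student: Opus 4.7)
The plan is to mirror the four-step construction used for \cref{DQRSVD111}, but systematically replace each invocation of DQGSVD2 (\cref{DDQGSVD}) with the nonsingular version of DQGSVD1 (\cref{thm:4.3}). Because DQGSVD1 yields a \emph{nonsingular} factor $\hat{\boldsymbol{X}}$ in place of the merely dual quaternion matrix $\boldsymbol{X}$ furnished by DQGSVD2, the cumulative transformations $\boldsymbol{P}^{(k)}, \boldsymbol{Q}^{(k)}$ remain nonsingular at every stage. The price paid is the appearance of additional infinitesimal blocks $\boldsymbol{N}_{\boldsymbol{A}}\epsilon, \boldsymbol{N}_{\boldsymbol{B}}\epsilon, \boldsymbol{N}_{\boldsymbol{C}}\epsilon$ in the intermediate decompositions, which will ultimately coalesce into the blocks $\begin{pmatrix}\Sigma_{\boldsymbol{A}}\epsilon & 0\\ 0 & 0\end{pmatrix}$, $\widetilde{\bf B}\epsilon$, $\widetilde{\bf C}\epsilon$ of \eqref{Thm42ABC} that are absent from \eqref{RSVDresu1}.

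In Step 1, I would apply DQGSVD1 to the stacked matrix $\begin{pmatrix}\boldsymbol{A}\\ \boldsymbol{C}\end{pmatrix}$ to obtain unitary $\hat{\boldsymbol{U}}_1, \hat{\boldsymbol{V}}_1$ and a nonsingular $\hat{\boldsymbol{X}}_1$, then set $\boldsymbol{Q}_1=\hat{\boldsymbol{X}}_1^{-1}$. The standard-part block structure of the resulting $\boldsymbol{A}^{(2)}, \boldsymbol{C}^{(2)}$ coincides with that in the proof of \cref{DQRSVD111}, but the trailing columns now carry the extra orthonormal $\epsilon$-tails coming from $\boldsymbol{N}_{\boldsymbol{A}}\epsilon, \boldsymbol{N}_{\boldsymbol{C}}\epsilon$. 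Step 2 proceeds analogously: apply DQGSVD1 (again in its nonsingular form) to the conjugate transpose of the submatrix \eqref{submx}, producing a nonsingular $\hat{\boldsymbol{P}}_2, \hat{\boldsymbol{Q}}_2$ and a unitary $\hat{\boldsymbol{U}}_2$, together with an additional infinitesimal block which feeds into $\widetilde{\bf B}\epsilon$.

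Step 3 is then the same blocked elementary reduction as before: the defined $\boldsymbol{P}^{(3)}, \boldsymbol{Q}^{(3)}$ are identity-plus-strictly-block-triangular, hence automatically nonsingular, while $\boldsymbol{V}^{(3)}$ is unitary by \cref{DM1}. In Step 4 I would apply the DQSVD of \cref{lm:4.1} to the $(1,1)$-block of $\boldsymbol{B}^{(4)}$ and assemble the global $\boldsymbol{P}, \boldsymbol{Q}$ as the products of the factors from Steps 1--4 together with the diagonal scalings by the appreciable matrices ${\sf \Sigma}_{\boldsymbol{B}_1^{(3)}}, \hat{\Sigma}_{\boldsymbol{B}_1^{(3)}}$; every factor in these products is nonsingular, so $\boldsymbol{P}$ and $\boldsymbol{Q}$ are nonsingular. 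The entries $\alpha_{\lambda+i}, \beta_{\lambda+i}, \gamma_{\lambda+i}$ are defined exactly by \eqref{eq:afbtgm}, and the orderings \eqref{RSVDsq}--\eqref{RSVDEQ2} carry over verbatim.

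The main obstacle will be careful book-keeping of the infinitesimal $\epsilon$-tail columns produced by each DQGSVD1 call: they survive every subsequent left/right multiplication in Steps 2--4, and must be tracked so that they end up precisely in the positions $\widetilde{\bf B}\epsilon$, $\widetilde{\bf C}\epsilon$, and the $(2,2)$-block $\begin{pmatrix}\Sigma_{\boldsymbol{A}}\epsilon & 0\\ 0 & 0\end{pmatrix}$ of $\boldsymbol{PAQ}$ claimed in \eqref{Thm42ABC}. Once this accounting is carried through, the rest of the argument is a direct transcription of the proof of \cref{DQRSVD111}, with every reference to DQGSVD2 replaced by the nonsingular form of DQGSVD1.
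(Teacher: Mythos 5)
Your overall strategy coincides with the paper's: rerun the four-step construction of \cref{DQRSVD111} with every call to DQGSVD2 (\cref{DDQGSVD}) replaced by the nonsingular form of DQGSVD1 (\cref{thm:4.3}), so that all accumulated left and right factors stay nonsingular. The gap lies in how you dispose of the infinitesimal tails. After the modified Steps 1--4 they do \emph{not} ``end up precisely'' in the positions $\widetilde{\bf B}\epsilon$, $\widetilde{\bf C}\epsilon$ and the $(2,2)$-block of $\boldsymbol{PAQ}$: in the paper's notation the updated matrix $\boldsymbol{A}^{(5)}$ still carries infinitesimal coupling blocks $\hat{\bf X}_{ac}^{1},\ldots,\hat{\bf X}_{ac}^{5}$ and $\hat{\bf X}_{ab}^{1},\hat{\bf X}_{ab}^{2},\hat{\bf X}_{ab}^{3}$ occupying rows and columns that must vanish in \eqref{Thm42ABC}, and the surviving trailing block is a \emph{general} quaternion matrix, not a diagonal one. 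Two further stages, absent from your plan, are needed: (i) additional blocked elementary transformations (nonsingular, applied on both sides of $\boldsymbol{A}^{(5)}$ and correspondingly to $\boldsymbol{B}^{(5)}$, $\boldsymbol{C}^{(5)}$), which use the appreciable pivots $I$, $I$, ${\sf S}_{\boldsymbol{A}}$ to annihilate those coupling blocks in $\boldsymbol{A}$ at the cost of leaving the residual infinitesimal blocks that become $\widetilde{\bf B}\epsilon$ in $\boldsymbol{PBU}$ and $\widetilde{\bf C}\epsilon$ in $\boldsymbol{VCQ}$; and (ii) a final standard quaternion SVD of the remaining trailing quaternion block (the paper's $\widetilde{\bf X}_{ac}^{6}$), whose unitary factors are absorbed into $\boldsymbol{P}$ and $\boldsymbol{Q}$, and which is the only step that produces the diagonal form $\left(\begin{smallmatrix}\Sigma_{\boldsymbol{A}}\epsilon&0\\0&0\end{smallmatrix}\right)$ claimed in \eqref{Thm42ABC}. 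Mere book-keeping cannot yield that diagonal block, because DQGSVD1 only guarantees that the tails $\boldsymbol{N}_{\boldsymbol{A}}\epsilon$, $\boldsymbol{N}_{\boldsymbol{B}}\epsilon$ have orthonormal columns, with no diagonal structure.

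Both extra stages involve only elementary (hence nonsingular) or unitary factors, so they preserve the nonsingularity of $\boldsymbol{P},\boldsymbol{Q}$ and the unitarity of $\boldsymbol{U},\boldsymbol{V}$; once you add them, your outline becomes the paper's proof. As written, however, the claim that the $\epsilon$-tails ``coalesce'' into the stated positions is precisely the step that still requires an argument.
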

\begin{proof}
	Let
	$\boldsymbol{A}^{(4)}=\boldsymbol{A},\ \boldsymbol{B}^{(4)}=\boldsymbol{B},\  \boldsymbol{C}^{(4)}=\boldsymbol{C}.$
	Repeat the operation of proof in \cref{DQRSVD111} except for replacing DQGSVD2 with DQGSVD1, we can eventually find nonsingular dual quaternion matrices $\boldsymbol{P}^{(4)}\in {\bf \mathbb{DQ}}^{m \times m}$, $\boldsymbol{Q}^{(4)}\in {\bf \mathbb{DQ}}^{n \times n}$, and unitary dual quaternion matrices $\boldsymbol{U}^{(4)}\in {\bf \mathbb{DQ}}^{p\times p}$, $\boldsymbol{V}^{(4)}\in {\bf \mathbb{DQ}}^{q\times q}$ such that
		\begin{align*}
		\boldsymbol{A}^{(5)}&=\boldsymbol{P}^{(4)}\boldsymbol{A}^{(4)}\boldsymbol{Q}^{(4)}
			=\begin{blockarray}{cccccccc}
				j+k+l &r&s&t&c_1&c_2&c_3\\ 
				\begin{block}{(ccccc|c|c)c}
					I&0&0&0&0&\hat{{\bf X}}_{ac}^{1}\epsilon&0&j+k+l  \\ 
					0&I&0&0&0&\hat{{\bf X}}_{ac}^{2}\epsilon&0&r\\
					0&0&{\sf S}_{\boldsymbol{A}}^{(2)}&0&0&\hat{{\bf X}}_{ac}^{3}\epsilon&0&s\\
					0&0&0&\widetilde{S}_{\boldsymbol{A}}^{(2)}\epsilon&0&\hat{{\bf X}}_{ac}^{4}\epsilon&0&t\\
					0&0&0&0&0&\hat{{\bf X}}_{ac}^{5}\epsilon&0&\hat{t}\\
					0&\hat{{\bf X}}_{ab}^{1}\epsilon&\hat{{\bf X}}_{ab}^{2}\epsilon&\hat{{\bf X}}_{ab}^{3}\epsilon&0&\hat{{\bf X}}_{ac}^{6}\epsilon&0&\hat{r}_1\\
					0&0&0&0&0&0&0&\hat{r}_2\\
				\end{block}
			\end{blockarray},
					\end{align*}
			\begin{align*}
		\boldsymbol{B}^{(5)}&=\boldsymbol{P}^{(4)}\boldsymbol{B}^{(4)}\boldsymbol{U}^{(4)}
			=\begin{blockarray}{cccc}
				p-s-\hat{t} &s&t+\hat{t}\\ 
				\begin{block}{(ccc)c}
					\begin{pmatrix}
						I&0&0\\
						0&I\epsilon&0\\
						0&0&0
					\end{pmatrix}&0&0&j+k+l  \\ 
					\hat{\bf B}\epsilon&0&0&r\\
					0&{\sf S}_{\boldsymbol{B}}&0&s  \\
					0&0&I&t\\
					\hat{{\bf X}}_{ba}^{1}\epsilon&\hat{{\bf X}}_{ba}^{2}\epsilon&\hat{{\bf X}}_{ba}^{3}\epsilon&\hat{r}_{1}\\
					0&0&0&\hat{r}_{2}\\
				\end{block}
			\end{blockarray}, \\
		\boldsymbol{C}^{(5)}&=\boldsymbol{V}^{(4)}\boldsymbol{C}^{(4)}\boldsymbol{Q}^{(4)}
			=\begin{blockarray}{cccccccc}
				j+k+l &r&s&t&{c}_{1}&c_{2}&c_{3}\\ 
				\begin{block}{(ccccccc)c}
					\hat{\bf C}_{1}\epsilon&0&0&0&0&\hat{{\bf X}}_{ca}^{1}&0&q-r-s-t-\hat{s}_{1}  \\ 
					0&I&0&0&0&\hat{{\bf X}}_{ca}^{2}\epsilon&0&r\\
					0&0&I&0&0&\hat{{\bf X}}_{ca}^{3}\epsilon&0&s\\
					0&0&0&I&0&\hat{{\bf X}}_{ca}^{4}\epsilon&0&{t}\\
					0&0&0&0&I&\hat{{\bf X}}_{ca}^{5}\epsilon&0&c_{1}\\
				\end{block}
			\end{blockarray}.
		\end{align*}

	%Step 5.
	Considering the blocked structure of $\boldsymbol{A}^{(5)}$, it is easy to perform the row and column blocked elementary transformations on $\boldsymbol{A}^{(5)}$ to eliminate the block matrices $\hat{\bf X}_{ac}^{i}$ and $\hat{\bf X}_{ab}^{j}$, $i=1,2,3,4,5$ and $j=1,2,3$. Donate the row and column transform matrices as $\boldsymbol{P}_{4}$ and $\boldsymbol{Q}_{4}$, respectively. 
	
	Let
		$\boldsymbol{P}^{(5)}=\boldsymbol{P}_{4},\ \boldsymbol{Q}^{(5)}=\boldsymbol{Q}_{4},\ \boldsymbol{U}^{(5)}=I_{p},\  \boldsymbol{V}^{(5)}=I_{q}.$
	Then,
		\begin{align*}
		\boldsymbol{A}^{(6)}&=\boldsymbol{P}^{(5)}\boldsymbol{A}^{(5)}\boldsymbol{Q}^{(5)}
			=\begin{blockarray}{cccccccc}
				j+k+l &r&s&t&c_1&c_2&c_3\\ 
				\begin{block}{(ccccc|c|c)c}
					I&0&0&0&0&0&0&j+k+l  \\ 
					0&I&0&0&0&0&0&r\\
					0&0&{\sf S}_{\boldsymbol{A}}^{(2)}&0&0&0&0&s\\
					0&0&0&\widetilde{S}_{\boldsymbol{A}}^{(2)}\epsilon&0&0&0&t\\
					0&0&0&0&0&0&0&\hat{t}\\
					0&0&0&0&0&\widetilde{{\bf X}}_{ac}^{6}\epsilon&0&\hat{r}_1\\
					0&0&0&0&0&0&0&\hat{r}_2\\
				\end{block}
			\end{blockarray},\\
		\boldsymbol{B}^{(6)}&=\boldsymbol{P}^{(5)}\boldsymbol{B}^{(5)}\boldsymbol{U}^{(5)}
			=\begin{blockarray}{cccc}
				p-s-\hat{t} &s&t+\hat{t}\\ 
				\begin{block}{(ccc)c}
					\begin{pmatrix}
						I&0&0\\
						0&I\epsilon&0\\
						0&0&0
					\end{pmatrix}&0&0&j+k+l  \\ 
					\hat{\bf B}\epsilon&0&0&r\\
					0&{\sf S}_{\boldsymbol{B}}&0&s  \\
					0&0&I&t\\
					\widetilde{{\bf X}}_{ba}^{1}\epsilon&\widetilde{{\bf X}}_{ba}^{2}\epsilon&\widetilde{{\bf X}}_{ba}^{3}\epsilon&\hat{r}_{1}\\
					0&0&0&\hat{r}_{2}\\
				\end{block}
			\end{blockarray},
					\end{align*}
			\begin{align*}
		\boldsymbol{C}^{(6)}&=\boldsymbol{V}^{(5)}\boldsymbol{C}^{(5)}\boldsymbol{Q}^{(5)}
			=\begin{blockarray}{cccccccc}
				j+k+l &r&s&t&{c}_{1}&c_{2}&c_{3}\\ 
				\begin{block}{(ccccccc)c}
					\hat{\bf C}_{1}\epsilon&0&0&0&0&\widetilde{{\bf X}}_{ca}^{1}&0&q-r-s-t-\hat{s}_{1}  \\ 
					0&I&0&0&0&\widetilde{{\bf X}}_{ca}^{2}\epsilon&0&r\\
					0&0&I&0&0&\widetilde{{\bf X}}_{ca}^{3}\epsilon&0&s\\
					0&0&0&I&0&\widetilde{{\bf X}}_{ca}^{4}\epsilon&0&{t}\\
					0&0&0&0&I&\widetilde{{\bf X}}_{ca}^{5}\epsilon&0&c_{1}\\
				\end{block}
			\end{blockarray}.
		\end{align*}
	
	%Step 6. 
	Performing the standard quaternion SVD for the quaternion matrix $\widetilde{{\bf X}}_{ac}^{6}$, there exist two unitary quaternion matrices ${\bf U}_{4}, {\bf V}_{4}$ such that
	\begin{equation*}
		{\bf U}_{4}^{*}\widetilde{{\bf X}}_{ac}^{6}{\bf V}_{4}=\begin{pmatrix}
			\Sigma_{\boldsymbol{A}}&0\\
			0&0
		\end{pmatrix}.
	\end{equation*}
	
	Let
$\boldsymbol{P}^{(6)}={\rm diag}(I_{j+k+l+r+s+t+\hat{t}},{\bf U}_{4}^{*},I_{\hat{r}_{2}}),\
 \boldsymbol{Q}^{(6)}={\rm diag}(I_{j+k+l+r+s+t+c_{1}}, {\bf V}_{4},I_{c_{2}}),$ $\boldsymbol{U}^{(6)}=I_{p},  \  \boldsymbol{V}^{(6)}=I_{q}.$
It is straightforward to verify that
		\begin{align*}
\boldsymbol{A}^{(7)}&=\boldsymbol{P}^{(6)}\boldsymbol{A}^{(6)}\boldsymbol{Q}^{(6)}, \
	\boldsymbol{B}^{(7)}=\boldsymbol{P}^{(6)}\boldsymbol{B}^{(6)}\boldsymbol{U}^{(6)},\\
	\boldsymbol{C}^{(7)}&=\boldsymbol{V}^{(6)}\boldsymbol{C}^{(6)}\boldsymbol{Q}^{(6)}
	\end{align*}
satisfy \eqref{Thm42ABC}, and $\alpha_i, \beta_i, \gamma_i$ satisfy  \eqref{RSVDsq}-\eqref{RSVDEQ2}.  
\end{proof}

\begin{remark}
	For the DQRSVD of dual quaternion matrices, the proof demonstrates that when all submatrices established in the DQSVD satisfy ${\rm rank}(\boldsymbol{M}) ={\rm Arank}(\boldsymbol{M})$ for each submatrix $\boldsymbol{M}$, then the matrices $\boldsymbol{P}$ and $\boldsymbol{Q}$ are guaranteed to be nonsingular. Moreover, this condition ensures that the decomposed matrices retain a well-formed structure, preserving the essential algebraic and geometric properties of the decomposition.
\end{remark}

\section{Product-Product SVD of a dual quaternion matrix triplet}\label{sec:PPSVD}

Concerning the numerical solution of the following matrix approximation problem $$\min\limits_{{\rm rank}(\boldsymbol{X})\leq r}\|\boldsymbol{A}(\boldsymbol{B} - \boldsymbol{X})\boldsymbol{C}\|_{\rm F},$$
where $\boldsymbol{A}, \boldsymbol{B}, \boldsymbol{C}$ are given general dual quaternion matrices with compatible dimensions, $r$ is a given nonnegative integer. Generally, the DQSVD of the product $\boldsymbol{ABC}$ can be used to solve this problem. However, as mentioned in \cite{Zha}, if we directly form the product $\boldsymbol{ABC}$ we may lose some of the information contained in the original matrices, leading to significant errors and a substantial discrepancy between the calculated values and the true values. In this section, we propose PPSVD of dual quaternion matrix triplets $(\boldsymbol{A},\boldsymbol{B},\boldsymbol{C})$ corresponding the DQSVD of the product $\boldsymbol{ABC}$. It only involves orthogonal transformations and treats the matrices $\boldsymbol{A},\boldsymbol{B}$ and $\boldsymbol{C}$ separately whenever possible.

First of all, we introduce the following results that can be directly obtained from \cref{lm:4.1}.

\begin{lemma}\label{PrePPSVD}
	Let $\boldsymbol{A} \in {\bf \mathbb{DQ}}^{m \times n}$ with ${\rm rank}(\boldsymbol{A})=r$, ${\rm Arank}(\boldsymbol{A})=\hat{r}$. Then, there exist unitary dual quaternion matrices $\boldsymbol{U}_{1}, \boldsymbol{U}_{2}\in {\bf \mathbb{DQ}}^{m \times m},$ $\boldsymbol{V}_{1}, \boldsymbol{V}_{2}\in {\bf \mathbb{DQ}}^{n \times n}$ and nonsingular dual quaternion matrices $\boldsymbol{P}\in {\bf \mathbb{DQ}}^{m \times m}$, $\boldsymbol{Q}\in {\bf \mathbb{DQ}}^{n \times n}$ such that
	\begin{subequations}
		\begin{align}\label{eq:PrePPSVDa}
			\boldsymbol{U}_{1}^{*}\boldsymbol{A}&=\begin{blockarray}{cc}
				\begin{block}{(c)c} 
				\boldsymbol{A}_{1}& \hat{r}\\
				{\bf A}_{2}\epsilon & r-\hat{r}  \\
				0 & m-r  \\
				\end{block}
			\end{blockarray},\ 
		\quad \boldsymbol{A}\boldsymbol{V}_1=\begin{blockarray}{ccc}
			\hat{r} & r-\hat{r} & n-r \\
			\begin{block}{(ccc)}
				\hat{\boldsymbol{A}}_{1}&\hat{\bf{A}}_{2}\epsilon&0 \\
		  \end{block}
			\end{blockarray},\\
			\label{eq:PrePPSVDb}
			\boldsymbol{U}_{2}^{*}\boldsymbol{A}\boldsymbol{Q}&=\begin{pmatrix}
				I_{\hat{r}}&&\\
				&I_{r-\hat{r}}\epsilon&\\
				&&&0
			\end{pmatrix}, \quad\boldsymbol{P}^{-1}\boldsymbol{A}\boldsymbol{V}_{2}=\begin{pmatrix}
				I_{\hat{r}}&&\\
				&I_{r-\hat{r}}\epsilon&\\
				&&&0
			\end{pmatrix}.
		\end{align}
	\end{subequations}
	where $\boldsymbol{A}_{1}\in {\bf \mathbb{DQ}}^{\hat{r} \times n}$ and ${\bf A}_{2}\in {\bf \mathbb{Q}}^{(r-\hat{r}) \times n}$ are of full row rank, $\hat{\boldsymbol{A}}_{1}\in {\bf \mathbb{DQ}}^{m\times \hat{r}}$ and $\hat{{\bf A}}_{2}\in {\bf \mathbb{Q}}^{m\times (r-\hat{r})}$ are of full column rank.
\end{lemma}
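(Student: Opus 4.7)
The plan is to derive all four identities as direct corollaries of the DQSVD (\cref{lm:4.1}). Applying that lemma to $\boldsymbol{A}$ produces unitary dual quaternion matrices $\boldsymbol{U}\in\mathbb{DQ}^{m\times m}$, $\boldsymbol{V}\in\mathbb{DQ}^{n\times n}$ with
\[
\boldsymbol{U}^{*}\boldsymbol{A}\boldsymbol{V}=\begin{pmatrix}\Sigma_r & 0\\ 0 & 0\end{pmatrix},\qquad \Sigma_r=\mathrm{diag}(\mu_1,\ldots,\mu_{\hat{r}},\mu_{\hat{r}+1},\ldots,\mu_r),
\]
where $\mu_1,\ldots,\mu_{\hat{r}}$ are positive appreciable dual numbers and $\mu_{\hat{r}+i}=c_i\epsilon$ with $c_i>0$ for $1\le i\le r-\hat{r}$. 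Every assertion of the lemma will follow from this factorization, either by partitioning $\boldsymbol{U}^{*}\boldsymbol{A}$ (resp.\ $\boldsymbol{A}\boldsymbol{V}$) or by introducing a diagonal rescaling that collapses the appreciable block to $I_{\hat{r}}$ and the middle block of infinitesimal entries to $I_{r-\hat{r}}\epsilon$.

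For the identities in \cref{eq:PrePPSVDa} I would set $\boldsymbol{U}_1:=\boldsymbol{U}$ and read off
\[
\boldsymbol{U}_1^{*}\boldsymbol{A}=\begin{pmatrix}\Sigma_r & 0\\ 0 & 0\end{pmatrix}\boldsymbol{V}^{*}.
\]
The top $\hat{r}$ rows, being appreciable dual numbers times rows of $\boldsymbol{V}^{*}$, form a full-row-rank appreciable block $\boldsymbol{A}_1\in\mathbb{DQ}^{\hat{r}\times n}$; full row rank follows because each $\mu_i$ is invertible in $\mathbb{D}$ and the rows of $\boldsymbol{V}^{*}$ are orthonormal. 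The next $r-\hat{r}$ rows are $c_i\epsilon$ times rows of $\boldsymbol{V}^{*}$, and since $\epsilon^{2}=0$ each such row collapses to $c_i$ times the corresponding row of $\mathbf{V}_{st}^{*}$ multiplied by $\epsilon$, giving a block $\mathbf{A}_2\epsilon$ with $\mathbf{A}_2\in\mathbb{Q}^{(r-\hat{r})\times n}$ of full row rank (the $c_i$ are nonzero and $\mathbf{V}_{st}$ is a unitary quaternion matrix). The second identity in \cref{eq:PrePPSVDa} is the mirror argument applied column-wise with $\boldsymbol{V}_1:=\boldsymbol{V}$, partitioning $\boldsymbol{A}\boldsymbol{V}=\boldsymbol{U}\bigl(\begin{smallmatrix}\Sigma_r & 0\\ 0 & 0\end{smallmatrix}\bigr)$ by columns.

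For the identities in \cref{eq:PrePPSVDb} I would introduce the diagonal scaling
\[
D:=\mathrm{diag}\bigl(\mu_1^{-1},\ldots,\mu_{\hat{r}}^{-1},\,c_1^{-1},\ldots,c_{r-\hat{r}}^{-1}\bigr),
\]
where the first $\hat{r}$ entries are genuine dual-number inverses (well defined because each $\mu_i$ is appreciable) while the next $r-\hat{r}$ entries are real inverses of the coefficients $c_i$. Every entry of $D$ has nonzero standard part, so $D$ is nonsingular. Setting $\boldsymbol{U}_2:=\boldsymbol{U}$ and $\boldsymbol{Q}:=\boldsymbol{V}\,\mathrm{diag}(D,I_{n-r})$, a direct computation gives $\boldsymbol{U}_2^{*}\boldsymbol{A}\boldsymbol{Q}=\mathrm{diag}(I_{\hat{r}},I_{r-\hat{r}}\epsilon,0)$; nonsingularity of $\boldsymbol{Q}$ follows from the unitarity of $\boldsymbol{V}$ combined with that of $D$. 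The fourth identity is obtained symmetrically by setting $\boldsymbol{V}_2:=\boldsymbol{V}$ and $\boldsymbol{P}:=\boldsymbol{U}\,\mathrm{diag}(D^{-1},I_{m-r})$, both factors nonsingular.

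The only genuinely delicate point is the treatment of the infinitesimal singular values $\mu_i=c_i\epsilon$: they have no inverse in $\mathbb{D}$, so the canonical form in \cref{eq:PrePPSVDb} cannot be reached by a dual-number inversion of $\Sigma_r$. Scaling instead by the real constants $c_i^{-1}$ rescales each $\mu_i$ to $\epsilon$ while keeping the overall transformation nonsingular in $\mathbb{DQ}$, which is exactly what the statement requires. Beyond this observation the entire lemma is pure bookkeeping on top of \cref{lm:4.1}.
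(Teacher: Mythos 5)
Your proposal is correct and follows exactly the route the paper intends: the paper presents \cref{PrePPSVD} as a direct consequence of the DQSVD in \cref{lm:4.1}, and your argument simply makes that explicit by partitioning $\boldsymbol{U}^{*}\boldsymbol{A}$ and $\boldsymbol{A}\boldsymbol{V}$ for \eqref{eq:PrePPSVDa} and composing $\boldsymbol{V}$ (resp.\ $\boldsymbol{U}$) with the nonsingular diagonal rescaling $\mathrm{diag}(\mu_1^{-1},\ldots,\mu_{\hat r}^{-1},c_1^{-1},\ldots,c_{r-\hat r}^{-1})$ for \eqref{eq:PrePPSVDb}. Your handling of the infinitesimal singular values — rescaling by the real coefficients $c_i^{-1}$ rather than attempting to invert $c_i\epsilon$ in $\mathbb{D}$ — is precisely the correct bookkeeping needed, so no gap remains.
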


%\begin{proof}
%	This lemma can be easily proved by \cref{lm:4.1}.
%\end{proof}

\begin{theorem}\label{ThmDQPPRSVD}
	Let $\boldsymbol{A} \in {\bf \mathbb{DQ}}^{m \times n}$, $\boldsymbol{B} \in {\bf \mathbb{DQ}}^{n\times p}$, $\boldsymbol{C} \in {\bf \mathbb{DQ}}^{p \times q}$. Then, there exist unitary matrices $\boldsymbol{U}\in {\bf \mathbb{DQ}}^{m \times m}$, $\boldsymbol{V}\in {\bf \mathbb{DQ}}^{q \times q}$, and nonsingular matrices $\boldsymbol{P}\in {\bf \mathbb{DQ}}^{n \times n}$, $\boldsymbol{Q}\in {\bf \mathbb{DQ}}^{p\times p}$ such that
	\begin{subequations}\label{PPSVDmatx}
		\begin{align}
			\boldsymbol{U}^{*}\boldsymbol{A}\boldsymbol{P}&=\begin{pmatrix}
				I_{i}&0\\
				0&{\bf A}_{22}\epsilon
			\end{pmatrix},\quad
			\boldsymbol{P}^{-1}\boldsymbol{B}\boldsymbol{Q}=\begin{pmatrix}
				I_{j}&0&0\\
				0&\hat{{\bf B}}_{12}\epsilon&\hat{{\bf B}}_{13}\epsilon\\
				0&I_{k}&{\bf B}_{23}\epsilon\\
				0&0&\hat{{\bf B}}_{23}\epsilon
			\end{pmatrix},  \\
			\boldsymbol{Q}^{-1}\boldsymbol{C}\boldsymbol{V}&=\begin{pmatrix}
					{\sf \Sigma}&0&0&0\\
					0&\hat{{\bf C}}_{12} \epsilon&\hat{{\bf C}}_{13} \epsilon&\hat{{\bf C}}_{14} \epsilon\\
					{\bf C}_{21} \epsilon&I_{s}&0&{\bf C}_{24} \epsilon\\
					0&0&0&\hat{{\bf C}}_{24} \epsilon\\
					{\bf C}_{31} \epsilon&0&I_{t}&{\bf C}_{34} \epsilon\\
					0&0&0&\hat{{\bf C}}_{34} \epsilon\\
				\end{pmatrix},
		\end{align}
		\end{subequations}
		where ${\sf \Sigma}={\rm diag}(\delta_{1},\ldots,\delta_{r})$ with $\delta_{1}\ge\cdots\ge\delta_{r}>0$ being the appreciable singular values of the matrix product $\boldsymbol{ABC}$, and $r={\rm Arank}(\boldsymbol{ABC})$.
\end{theorem}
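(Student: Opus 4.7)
The plan is to build the decomposition incrementally by applying Lemma~\ref{PrePPSVD} to each matrix in succession, updating the other two matrices accordingly, and finishing with a DQSVD (Lemma~\ref{lm:4.1}) of the single surviving ``appreciable path'' to produce $\boldsymbol{\Sigma}$. As an initial step, apply \eqref{eq:PrePPSVDb} of Lemma~\ref{PrePPSVD} to $\boldsymbol{A}$: there exist a unitary $\boldsymbol{U}^{(1)} \in \mathbb{DQ}^{m \times m}$ and a nonsingular $\boldsymbol{P}^{(1)} \in \mathbb{DQ}^{n \times n}$ with $(\boldsymbol{U}^{(1)})^{*}\boldsymbol{A}\boldsymbol{P}^{(1)} = \begin{pmatrix} I_{i} & 0 \\ 0 & \mathbf{A}_{22}\epsilon \end{pmatrix}$, where I absorb possible trailing zero rows into $\mathbf{A}_{22}\epsilon$ by viewing it as an infinitesimal block of full row/column size. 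The induced change $\boldsymbol{B} \to (\boldsymbol{P}^{(1)})^{-1}\boldsymbol{B}$ splits $\boldsymbol{B}$ into an upper part $\boldsymbol{B}_1 \in \mathbb{DQ}^{i \times p}$ (whose row-space enters $\boldsymbol{ABC}$ appreciably via the $I_i$ block) and a lower part $\boldsymbol{B}_2 \in \mathbb{DQ}^{(n-i) \times p}$ (whose contribution is premultiplied by the infinitesimal $\mathbf{A}_{22}\epsilon$, so it will only ever feed $\boldsymbol{ABC}$ at the $\epsilon$-level).

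Next I would reduce $\boldsymbol{B}_2$ on the left by a nonsingular matrix (and simultaneously pick a compatible right nonsingular transformation $\boldsymbol{Q}^{(2)}$) using Lemma~\ref{PrePPSVD} to surface its appreciable part as an $I_k$ block; using a block-diagonal left transformation $\mathrm{diag}(I_i,\boldsymbol{T}_2)$ preserves the $I_i$ block in $\boldsymbol{A}$ and merely updates $\mathbf{A}_{22}\epsilon$ within the infinitesimals. The right action $\boldsymbol{Q}^{(2)}$ acts on $\boldsymbol{B}_1$ as well, so $\boldsymbol{B}_1$ is simultaneously re-partitioned; apply a second right transformation (absorbed into the final $\boldsymbol{Q}$) to reduce $\boldsymbol{B}_1$ via Lemma~\ref{PrePPSVD} into $\begin{pmatrix} I_j & 0 & 0 \\ 0 & \hat{\mathbf{B}}_{12}\epsilon & \hat{\mathbf{B}}_{13}\epsilon \end{pmatrix}$, using a block-upper-triangular right factor whose action on the already-cleaned $\boldsymbol{B}_2$ can be balanced by appending row-elementary compensations (as was done in the DQRSVD proof in Step~3 of Theorem~\ref{DQRSVD111}). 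Each right action on $\boldsymbol{B}$ is mirrored by the inverse left action on $\boldsymbol{C}$, so $\boldsymbol{C}$ is updated accordingly; the resulting column partition of $\boldsymbol{C}$ is $r+s+t+(q-r-s-t)$, matching the sizes of the blocks $\boldsymbol{\Sigma}$, $I_s$, $I_t$, and the trailing infinitesimal column.

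Finally, observe that in the updated $\boldsymbol{A}\boldsymbol{B}$ the only non-infinitesimal nonzero block is $I_j$ in position $(1,1)$: every other nonzero entry of $\boldsymbol{B}$ is of order $\epsilon$, and the bottom rows of $\boldsymbol{B}$ are further premultiplied by $\mathbf{A}_{22}\epsilon$, so by $\epsilon^{2}=0$ they contribute nothing outside of infinitesimal terms to $\boldsymbol{A}\boldsymbol{B}\boldsymbol{C}$. Thus the appreciable part of $\boldsymbol{A}\boldsymbol{B}\boldsymbol{C}$ is determined entirely by the first $j$ rows of $\boldsymbol{C}$. Apply Lemma~\ref{lm:4.1} to that $j\times q$ submatrix to obtain unitaries $\boldsymbol{U}_{0},\boldsymbol{V}_{0}$ and a diagonal $\boldsymbol{\Sigma}$; absorb $\boldsymbol{V}_{0}$ into the right unitary $\boldsymbol{V}$ acting on $\boldsymbol{C}$, and absorb $\boldsymbol{U}_{0}$ into a nonsingular left transformation on $\boldsymbol{C}$ that is compensated by the inverse right transformation on $\boldsymbol{B}$; because that right transformation acts on the column block containing $I_j$ in a way that can be reconciled by a simultaneous left action on $\boldsymbol{B}$ absorbed into $\boldsymbol{P}$, the $I_j$ block is preserved. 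The resulting $\boldsymbol{\Sigma}$ is then, by construction, the list of appreciable singular values of $\boldsymbol{A}\boldsymbol{B}\boldsymbol{C}$, and so $r = \mathrm{Arank}(\boldsymbol{A}\boldsymbol{B}\boldsymbol{C})$.

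The main obstacle I foresee lies in the coordination in Step~3: every left/right transformation applied to refine $\boldsymbol{B}$ must be ``paid for'' by an inverse transformation on one of its neighbors, and one must verify that the already-placed identity blocks (first $I_i$ in $\boldsymbol{A}$, then $I_k$ in $\boldsymbol{B}$, finally $I_j$ in $\boldsymbol{B}$) remain intact under all subsequent bookkeeping. The secondary difficulty is justifying rigorously, in the $\epsilon^{2}=0$ arithmetic, that the appreciable singular values of the \emph{unformed} product $\boldsymbol{A}\boldsymbol{B}\boldsymbol{C}$ are captured exactly by the $\boldsymbol{\Sigma}$ obtained in Step~4; this is a direct but careful block multiplication in which every product of two or more infinitesimal blocks collapses to zero.
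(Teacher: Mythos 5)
Your overall strategy runs parallel to the paper's (successive reductions via \cref{PrePPSVD}, shared nonsingular factors $\boldsymbol{P},\boldsymbol{Q}$ with compensations passed along the chain $\boldsymbol{A}\to\boldsymbol{B}\to\boldsymbol{C}$, and a final DQSVD producing ${\sf \Sigma}$; your Gram-type observation that the rows premultiplied by $\epsilon$-blocks cannot affect the singular values, since $\epsilon^2=0$, is a sound way to identify ${\sf \Sigma}$ with the appreciable singular values of $\boldsymbol{ABC}$). But as written the proposal has a genuine gap: the canonical form of $\boldsymbol{Q}^{-1}\boldsymbol{C}\boldsymbol{V}$ is asserted, never constructed. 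You only treat the top $j\times q$ block of $\boldsymbol{C}$; the blocks $I_s$ and $I_t$, the exact zero pattern, and the cleanup of the first block row to $({\sf \Sigma},0,0,0)$ are where most of the work lies. In the paper this requires a forward compression of $\boldsymbol{C}$ into a staircase of full-column-rank blocks, DQQR triangularization, the blocked elimination $\boldsymbol{G}_1$, reductions of $\boldsymbol{C}_{22}^{(3)},\boldsymbol{C}_{33}^{(3)}$ to $I_s,I_t$ via \eqref{eq:PrePPSVDb}, and crucially a final \emph{unitary} $\boldsymbol{V}_4$ of the special form of \cref{DM1} to kill the $\epsilon$-blocks in the first row of $\boldsymbol{C}$ — unitarity is what lets the compensation chain terminate in $\boldsymbol{V}$ (at the price of the fill-in ${\bf C}_{21}\epsilon,{\bf C}_{31}\epsilon$ in the first column). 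None of this appears in your sketch, and "the resulting column partition of $\boldsymbol{C}$ is $r+s+t+(q-r-s-t)$" does not follow from the steps you actually perform.

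The second difficulty, which you flag but do not resolve, is that your ordering (reduce $\boldsymbol{A}$ completely to ${\rm diag}(I_i,{\bf A}_{22}\epsilon)$ first, then fix $\boldsymbol{B}_2$, then $\boldsymbol{B}_1$) forces compensations that break the already-fixed exact-zero patterns. A non-unitary left factor $L$ on $\boldsymbol{B}_1$ is a modification of $\boldsymbol{P}$ and turns $I_i$ into $L^{-1}$, which cannot be repaired by the unitary $\boldsymbol{U}$ alone; the paper avoids this by making every left factor applied to $\boldsymbol{B}$'s top block of the form $\boldsymbol{U}_2^{*}\boldsymbol{P}_1^{-1}$, i.e.\ a unitary times the inverse of the factor already used on $\boldsymbol{A}$. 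Likewise, row operations that zero the appreciable lower-left entries of $\boldsymbol{B}$ produce an $\epsilon$-level $(2,1)$-block in $\boldsymbol{A}$ and $\epsilon$ fill-in in the first block rows of $\boldsymbol{B}$ and $\boldsymbol{C}$, which must then be chased out by the elimination chain $\boldsymbol{G}_3\to\boldsymbol{G}_4\to\boldsymbol{V}_4$ as in the paper's last phase; your appeal to "row-elementary compensations as in Step 3 of \cref{DQRSVD111}" gestures at this but does not show that the chain closes. So the proposal captures the right ideas and the right lemmas, but the interlocking bookkeeping that constitutes the actual proof — the forward compression/backward triangularization ordering and the terminating cleanup of $\boldsymbol{C}$ — is missing.
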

\begin{proof}
	The proof is constructive, comprising a compression phase and a subsequent triangularization phase.
	
	First, from \eqref{eq:PrePPSVDa} in \cref{PrePPSVD} there exists $\widetilde{\boldsymbol{U}}_{2}\in {\bf \mathbb{DQ}}^{n \times n}$ such that
	\begin{equation}\label{eq:ModA}
		\boldsymbol{A}\widetilde{\boldsymbol{U}}_{2}=\begin{blockarray}{cc}
			i& n-i\\
			\begin{block}{(cc)}
				\boldsymbol{A}_{11}^{(1)},&\hat{\bf A}_{12}^{(1)}\epsilon\\
			\end{block}
		\end{blockarray},
	\end{equation}
	where $\boldsymbol{A}_{11}^{(1)}\in {\bf \mathbb{DQ}}^{m\times i}$ is of full column rank. Correspondingly, partition $\widetilde{\boldsymbol{U}}_{2}^{*}\boldsymbol{B}$ as follows
	\begin{equation*}
		\widetilde{\boldsymbol{U}}_{2}^{*}\boldsymbol{B}=\begin{blockarray}{cc}
			\begin{block}{(c)c}
				\boldsymbol{B}_{1}&i\\
				\boldsymbol{B}_{2}& n-i\\
			\end{block}
		\end{blockarray}.
	\end{equation*}
	Applying \eqref{eq:PrePPSVDa} once again, there exists $\hat{\boldsymbol{U}}_{3}\in {\bf \mathbb{DQ}}^{p \times p}$ that compresses the columns of $\boldsymbol{B}_{1}$ such that 
	\begin{equation*}
		\boldsymbol{B}_{1}\hat{\boldsymbol{U}}_{3}=\begin{blockarray}{ccc}
			j&&\\
			\begin{block}{(ccc)}
				{\boldsymbol{B}}_{11}^{(1)},&{{\bf B}}_{12}\epsilon,&{{\bf B}}_{13}\epsilon\\
			\end{block}
		\end{blockarray},
	\end{equation*}
	where $\boldsymbol{B}_{11}^{(1)}\in {\bf \mathbb{DQ}}^{i \times j}$ has full column rank. Then the resulting $\boldsymbol{B}$ can be partitioned as follows
	\begin{equation*}
	\widetilde{\boldsymbol{U}}_{2}^{*}\boldsymbol{B}\hat{\boldsymbol{U}}_{3}=\begin{pmatrix}
			{\boldsymbol{B}}_{11}^{(1)}&{{\bf B}}_{12}\epsilon&{{\bf B}}_{13}\epsilon\\
			\boldsymbol{B}_{21}&\boldsymbol{B}_{22}&\boldsymbol{B}_{23}\\
		\end{pmatrix}.
	\end{equation*}
	Further from \eqref{eq:PrePPSVDa} there exists $\bar{\boldsymbol{U}}_{2}\in {\bf \mathbb{DQ}}^{(p-j) \times (p-j)}$ that compresses the columns of $(\boldsymbol{B}_{22},\  \boldsymbol{B}_{23})$ such that
	\begin{equation*}
		(\boldsymbol{B}_{22},\  \boldsymbol{B}_{23})\bar{\boldsymbol{U}}_{2}=\begin{blockarray}{cc}
			k& p-j-k\\
			\begin{block}{(cc)}
				\boldsymbol{B}_{22}^{(1)}&{\bf B}_{23}^{(1)}\epsilon\\
			\end{block}
		\end{blockarray},
	\end{equation*}
	where $\boldsymbol{B}_{22}^{(1)}\in {\bf \mathbb{DQ}}^{(n-i) \times k}$ has full column rank. As a result,
	\begin{equation}\label{eq:ModB}
		\widetilde{\boldsymbol{U}}_{2}^{*}\boldsymbol{B}\hat{\boldsymbol{U}}_{3}{\rm diag}(I_j, \bar{\boldsymbol{U}}_{2})=\begin{blockarray}{cccc}
			j&k& p-j-k \\
			\begin{block}{(ccc)c}
				{\boldsymbol{B}}_{11}^{(1)}&{{\bf B}}_{12}^{(1)}\epsilon&{{\bf B}}_{13}^{(1)}\epsilon&i\\
				\boldsymbol{B}_{21}^{(1)}&\boldsymbol{B}_{22}^{(1)}&{\bf B}_{23}^{(1)}\epsilon & n-i\\
			\end{block}
		\end{blockarray}.
	\end{equation}
	
		Let $\widetilde{\boldsymbol{U}}_{3}=\hat{\boldsymbol{U}}_{3}{\rm diag}(I_j, \bar{\boldsymbol{U}}_{2})$.
	Then partition $\widetilde{\boldsymbol{U}}_{3}^{*}\boldsymbol{C}$ as follows
	\begin{equation*}
		\widetilde{\boldsymbol{U}}_{3}^{*}\boldsymbol{C}=\begin{blockarray}{cc}
			\begin{block}{(c)c}
				\boldsymbol{C}_{1}&j\\
				\boldsymbol{C}_{2}&k\\
				\boldsymbol{C}_{3} & p-j-k \\
			\end{block}
		\end{blockarray}.
	\end{equation*} 
 Repeating the compression procedure to the partitioned $\widetilde{\boldsymbol{U}}_{3}^{*}\boldsymbol{C}$ according to \eqref{eq:PrePPSVDa}, we can similarly find a unitary matrix $\widetilde{\boldsymbol{U}}_{4}\in {\bf \mathbb{DQ}}^{q \times q}$ such that
	\begin{equation*}
		\widetilde{\boldsymbol{U}}_{3}^{*}\boldsymbol{C}\widetilde{\boldsymbol{U}}_{4}=\begin{blockarray}{ccccc}
			\begin{block}{(cccc)c}
				\boldsymbol{C}_{11}^{(1)}&{\bf C}_{12}^{(1)}\epsilon&{\bf C}_{13}^{(1)}\epsilon&{\bf C}_{14}^{(1)}\epsilon&j\\
				\boldsymbol{C}_{21}^{(1)}&\boldsymbol{C}_{22}^{(1)}&{\bf C}_{23}^{(1)}\epsilon&{\bf C}_{24}^{(1)}\epsilon&k\\
				\boldsymbol{C}_{31}^{(1)}&\boldsymbol{C}_{32}^{(1)}&\boldsymbol{C}_{33}^{(1)}&{\bf C}_{34}^{(1)}\epsilon& p-j-k\\
			\end{block}
		\end{blockarray},
	\end{equation*}
	where $\boldsymbol{C}_{11}^{(1)}$, $\boldsymbol{C}_{22}^{(1)}$, $\boldsymbol{C}_{33}^{(1)}$ are dual quaternion matrices with full column rank. 
	
	After completing the forward procedure, we proceed to the backward triangularization process. Triangularize the matrices $\boldsymbol{C}_{11}^{(1)}$, $\boldsymbol{C}_{22}^{(1)}$, $\boldsymbol{C}_{33}^{(1)}$ using the DQQR decomposition \cite{LingMW} such that
	\begin{equation*}
		\boldsymbol{C}_{11}^{(1)}=\boldsymbol{Q}_{c}^{(1)}\begin{pmatrix}
			\boldsymbol{C}_{11}^{(2)}\\
			0
		\end{pmatrix},\quad \boldsymbol{C}_{22}^{(1)}=\boldsymbol{Q}_{c}^{(2)}\begin{pmatrix}
			\boldsymbol{C}_{22}^{(2)}\\
			0
		\end{pmatrix},\quad \boldsymbol{C}_{33}^{(1)}=\boldsymbol{Q}_{c}^{(3)}\begin{pmatrix}
			\boldsymbol{C}_{33}^{(2)}\\
			0
		\end{pmatrix},
	\end{equation*} 
	where $\boldsymbol{C}_{11}^{(2)}$, $\boldsymbol{C}_{22}^{(2)}$, $\boldsymbol{C}_{33}^{(2)}$ are nonsingular upper triangular dual quaternion matrices. Modifying the corresponding columns of the resulted $\boldsymbol{B}$ in \eqref{eq:ModB} with $\boldsymbol{Q}_{c}^{(1)}$, $\boldsymbol{Q}_{c}^{(2)}$, $\boldsymbol{Q}_{c}^{(3)}$ we obtain
	\begin{equation*}
		\begin{pmatrix}
			{\boldsymbol{B}}_{11}^{(1)}&{{\bf B}}_{12}^{(1)}\epsilon&{{\bf B}}_{13}^{(1)}\epsilon\\
			\boldsymbol{B}_{21}^{(1)}&\boldsymbol{B}_{22}^{(1)}&{\bf B}_{23}^{(1)}\epsilon
		\end{pmatrix}\begin{pmatrix}
			\boldsymbol{Q}_{c}^{(1)}&&\\
			&\boldsymbol{Q}_{c}^{(2)}&\\
			&&\boldsymbol{Q}_{c}^{(3)}
		\end{pmatrix}=\begin{pmatrix}
			\boldsymbol{B}_{11}^{(2)}&{\bf B}_{12}^{(2)}\epsilon&{\bf B}_{13}^{(2)}\epsilon\\
			\boldsymbol{B}_{21}^{(2)}&\boldsymbol{B}_{22}^{(2)}&{\bf B}_{23}^{(2)}\epsilon
		\end{pmatrix}.
	\end{equation*}
	Similarly triangularize the matrices $\boldsymbol{B}_{11}^{(2)}$ and $\boldsymbol{B}_{22}^{2}$ using DQQR decompositionc \cite{LingMW} to obtain
	\begin{equation*}
		\boldsymbol{B}_{11}^{(2)}=\boldsymbol{Q}_{b}^{(1)}\begin{pmatrix}
			\boldsymbol{B}_{11}^{(3)}\\
			0
		\end{pmatrix},\quad\boldsymbol{B}_{22}^{(2)}=\boldsymbol{Q}_{b}^{(2)}\begin{pmatrix}
			\boldsymbol{B}_{22}^{(3)}\\
			0
		\end{pmatrix},
	\end{equation*}
	where $\boldsymbol{B}_{11}^{(3)}$ and $\boldsymbol{B}_{22}^{(3)}$ are nonsingular upper triangular dual quaternion matrices. Modify the corresponding columns of the resulted $\boldsymbol{A}$ in \eqref{eq:ModA} with $\boldsymbol{Q}_{b}^{(1)}$ and $\boldsymbol{Q}_{b}^{(2)}$ to obtain 
	\begin{equation*}
		(\boldsymbol{A}_{11}^{(1)},\quad{\bf A}_{12}^{(1)}\epsilon)\begin{pmatrix}
			\boldsymbol{Q}_{b}^{(1)}&\\
			&\boldsymbol{Q}_{b}^{(2)}
		\end{pmatrix}=(\boldsymbol{A}_{11}^{(2)},\quad{\bf A}_{12}^{(2)}\epsilon).
	\end{equation*}
	Triangularize $\boldsymbol{A}_{11}^{(2)}$ using DQQR decomposition so that
	\begin{equation*}
		\boldsymbol{A}_{11}^{(2)}=\boldsymbol{Q}_{a}\begin{pmatrix}
			\boldsymbol{A}_{11}^{(3)}\\
			0
		\end{pmatrix}, 
	\end{equation*}
	where $\boldsymbol{A}_{11}^{(3)}$ is a nonsingular upper triangular dual quaternion matrix.
	
	Donate 
		\begin{align}
			\widetilde{\boldsymbol{C}}&=\begin{pmatrix}
				\boldsymbol{Q}_{c}^{(1)}&&\\
				&\boldsymbol{Q}_{c}^{(2)}&\\
				&&\boldsymbol{Q}_{c}^{(3)}\\
			\end{pmatrix}^{*}\widetilde{\boldsymbol{U}}_{3}^{*}\boldsymbol{C}\widetilde{\boldsymbol{U}}_{4}\nonumber\\
			&\label{eq:Cb}=\begin{blockarray}{ccccc}
				\begin{block}{(cccc)c}
					\begin{pmatrix}
						\boldsymbol{C}_{11}^{(2)}\\
						0
					\end{pmatrix}&{\bf C}_{12}^{(2)}\epsilon&{\bf C}_{13}^{(2)}\epsilon&{\bf C}_{14}^{(2)}\epsilon\\
					\boldsymbol{C}_{21}^{(2)}&\begin{pmatrix}
						\boldsymbol{C}_{22}^{(2)}\\
						0
					\end{pmatrix}&{\bf C}_{23}^{(2)}\epsilon&{\bf C}_{24}^{(2)}\epsilon\\
					\boldsymbol{C}_{31}^{(2)}&\boldsymbol{C}_{32}^{(2)}&\begin{pmatrix}
						\boldsymbol{C}_{33}^{(2)}\\
						0
					\end{pmatrix}&{\bf C}_{34}^{(2)}\epsilon\\
				\end{block}
			\end{blockarray},\\
			\widetilde{\boldsymbol{B}}&=\begin{pmatrix}
				\boldsymbol{Q}_{b}^{(1)}&\\
				&\boldsymbol{Q}_{b}^{(2)}
			\end{pmatrix}^{*}\widetilde{\boldsymbol{U}}_{2}^{*}\boldsymbol{B}\widetilde{\boldsymbol{U}}_{3}\begin{pmatrix}
				\boldsymbol{Q}_{c}^{(1)}&&\\
				&\boldsymbol{Q}_{c}^{(2)}&\\
				&&\boldsymbol{Q}_{c}^{(3)}
			\end{pmatrix}\nonumber\\
			&\label{eq:Bd}=\begin{pmatrix}
				\begin{pmatrix}
					\boldsymbol{B}_{11}^{(3)}\\
					0
				\end{pmatrix}&{\bf B}_{12}^{(3)}\epsilon&{\bf B}_{13}^{(3)}\epsilon\\
				\boldsymbol{B}_{21}^{(3)}&\begin{pmatrix}
					\boldsymbol{B}_{22}^{(3)}\\
					0
				\end{pmatrix}&{\bf B}_{23}^{(3)}\epsilon
			\end{pmatrix},
					\end{align}
			\begin{align}
		\label{eq:Ae}
			\widetilde{\boldsymbol{A}}&=\boldsymbol{Q}_{a}^{*}(\boldsymbol{A}_{11}^{(2)},\ {\bf A}_{12}^{(2)}\epsilon)=\begin{pmatrix}
				\boldsymbol{A}_{11}^{(3)}&{\bf A}_{12}^{(3)}\epsilon\\
				0&{\bf A}_{22}^{(3)}\epsilon
			\end{pmatrix}.
		\end{align}
	
	Since $\boldsymbol{C}_{11}^{(2)}$, $\boldsymbol{C}_{22}^{(2)}$, $\boldsymbol{C}_{33}^{(2)}$ are nonsingular upper triangular dual quaternion matrices in \eqref{eq:Cb}, they can be used as pivoting matrices to eliminate the block lower triangular part of $\widetilde{\boldsymbol{C}}$. Therefore, it is easy to find a blocked elementary dual quaternion matrix $\boldsymbol{G}_{1}$ such that
	\begin{equation*}
		\boldsymbol{G}_{1}^{-1}\widetilde{\boldsymbol{C}}=\begin{blockarray}{ccccc}
			\begin{block}{(cccc)c}
				\boldsymbol{C}_{11}^{(3)}&0&0&{\bf C}_{14}^{(3)}\epsilon\\
				0&0&0&\hat{{\bf C}}_{14}^{(3)}\epsilon\\
				0&\boldsymbol{C}_{22}^{(3)}&0&{\bf C}_{24}^{(3)}\epsilon\\
				0&0&0&\hat{{\bf C}}_{24}^{(3)}\epsilon\\
				0&0&\boldsymbol{C}_{33}^{(3)}&{\bf C}_{34}^{(3)}\epsilon\\
				0&0&0&\hat{{\bf C}}_{34}^{(3)}\epsilon\\
			\end{block}
		\end{blockarray}.
	\end{equation*}
	Applying $\boldsymbol{G}_{1}$ to the right-hand side of $\widetilde{\boldsymbol{B}}$ in \eqref{eq:Bd} such that
	\begin{equation*}
		\widetilde{\boldsymbol{B}}\boldsymbol{G}_{1}=\begin{pmatrix}
			\begin{pmatrix}
				\boldsymbol{B}_{11}^{(4)}\\
				\hat{{\bf B}}_{11}^{(4)}\epsilon
			\end{pmatrix}&{\bf B}_{12}^{(4)}\epsilon&{\bf B}_{13}^{(4)}\epsilon\\
			\boldsymbol{B}_{21}^{(4)}&\begin{pmatrix}
				\boldsymbol{B}_{22}^{(4)}\\
				\hat{\bf B}_{22}^{(4)}\epsilon
			\end{pmatrix}&{\bf B}_{23}^{(4)}\epsilon
		\end{pmatrix}.
	\end{equation*}
	Similarly, there exists a blocked elementary dual quaternion matrix $\boldsymbol{G}_{2}$ such that
	\begin{equation*}
		\boldsymbol{G}_{2}^{-1}\widetilde{\boldsymbol{B}}\boldsymbol{G}_{1}=\begin{pmatrix}
			\boldsymbol{B}_{11}^{(5)}&0&{\bf B}_{13}^{(5)}\epsilon\\
			0&0&\hat{{\bf B}}_{13}^{(5)}\epsilon\\
			0&\boldsymbol{B}_{22}^{(5)}&{\bf B}_{23}^{(5)}\epsilon\\
			0&0&{\bf B}_{23}^{(5)}\epsilon
		\end{pmatrix}.
	\end{equation*}
	
	Multiplying $\boldsymbol{G}_{2}$ on the right-hand side of $\widetilde{\boldsymbol{A}}$ in \eqref{eq:Ae} gives
	\begin{equation*}
		\widetilde{\boldsymbol{A}}\boldsymbol{G}_{2}=\begin{pmatrix}
			\boldsymbol{A}_{11}^{(4)}&{\bf A}_{12}^{(4)}\epsilon\\
			{\bf A}_{12}^{(4)}\epsilon&{\bf A}_{22}^{(4)}\epsilon
		\end{pmatrix}.
	\end{equation*}
	With the DQQR decomposition of the first block column of $\widetilde{\boldsymbol{A}}\boldsymbol{G}_{2}$, there exists a unitary dual quaternion matrix $\widetilde{\boldsymbol{U}}_{1}$ such that
	\begin{equation*}
		\widetilde{\boldsymbol{U}}_{1}\widetilde{\boldsymbol{A}}\boldsymbol{G}_{2}=\begin{pmatrix}
			\boldsymbol{A}_{11}^{(5)}&{\bf A}_{12}^{(5)}\epsilon\\
			0&{\bf A}_{22}^{(5)}\epsilon
		\end{pmatrix}.
	\end{equation*}
	
	According to \eqref{eq:PrePPSVDb} in \cref{PrePPSVD} there exists a nonsingular dual quaternion matrix $\boldsymbol{P}_{1}$ and a unitary dual quaternion matrix $\boldsymbol{U}_1$ such that $\boldsymbol{U}_{1}^{*}\boldsymbol{A}_{11}^{(5)}\boldsymbol{P}_{1}=I_{i}$. Similarly, we  obtain
	\begin{equation*}
		\boldsymbol{U}_{2}^{*}\left(\boldsymbol{P}_{1}^{-1}\begin{pmatrix}
			\boldsymbol{B}_{11}^{(5)}\\
			0
		\end{pmatrix}\right)\boldsymbol{Q}_{1}=\begin{pmatrix}
			I_{j}\\
			0
		\end{pmatrix},\quad\boldsymbol{P}_{2}^{-1}\begin{pmatrix}
			\boldsymbol{B}_{22}\\
			0
		\end{pmatrix}\boldsymbol{Q}_{2}=\begin{pmatrix}
			I_{k}\\
			0
		\end{pmatrix},
	\end{equation*} 
	and
	\begin{subequations}
		\begin{align*}	
			\boldsymbol{U}_{3}^{*}\left(\boldsymbol{Q}_{1}^{-1}\begin{pmatrix}
				\boldsymbol{C}_{11}^{(3)}\\
				0
			\end{pmatrix}\right)\boldsymbol{V}_{1}&=\begin{pmatrix}
				{\sf \Sigma}\\
				0
			\end{pmatrix},\ \
			\boldsymbol{Q}_{3}^{-1}\left(\boldsymbol{Q}_{2}^{-1}\begin{pmatrix}
				\boldsymbol{C}_{22}^{(3)}\\
				0
			\end{pmatrix}\right)\boldsymbol{V}_{2}=\begin{pmatrix}
				I_{s}\\
				0
			\end{pmatrix},\\
			\boldsymbol{Q}_{4}^{-1}\begin{pmatrix}
				\boldsymbol{C}_{33}^{(3)}\\
				0
			\end{pmatrix}\boldsymbol{V}_{3}&=\begin{pmatrix}
				I_{t}\\
				0
			\end{pmatrix}.
		\end{align*}
	\end{subequations}
	
	As a result, we have
		\begin{align}
			\hat{\boldsymbol{A}}&=\begin{pmatrix}
				\boldsymbol{U}_{3}^{*}&\\
				&I_{m-j}
			\end{pmatrix}\begin{pmatrix}
				\boldsymbol{U}_{2}^{*}&\\
				&I_{m-i}
			\end{pmatrix}\begin{pmatrix}
				\boldsymbol{U}_{1}^{*}&\\
				&I_{m-i}
			\end{pmatrix}\widetilde{\boldsymbol{U}}_{1}\widetilde{\boldsymbol{A}}\boldsymbol{G}_{2}\nonumber\\
			& \quad \cdot  \begin{pmatrix}
				\boldsymbol{P}_{1}\boldsymbol{U}_{2}&\\
				&\boldsymbol{P}_{2}
			\end{pmatrix} \begin{pmatrix}
				\boldsymbol{U}_{3}&&&\\
				&I_{i-j}&&\\
				&&\boldsymbol{Q}_{3}&\\
				&&&I_{n-i-k}
			\end{pmatrix}\nonumber \\
			&\label{eq:hatAc}=\begin{pmatrix}
				I_{i}&{\bf A}_{12}^{(6)}\epsilon\\
				0&{\bf A}_{22}^{(6)}\epsilon
			\end{pmatrix},
			\end{align}
			\begin{align}
			\hat{\boldsymbol{B}}&=\begin{pmatrix}
				\boldsymbol{U}_{3}^{*}&&&\\
				&I_{i-j}&&\\
				&&\boldsymbol{Q}_{3}^{-1}&\\
				&&&I_{n-i-k}
			\end{pmatrix}\begin{pmatrix}
				\boldsymbol{U}_{2}^{*}\boldsymbol{P}_{1}^{-1}&\\
				&\boldsymbol{P}_{2}^{-1}
			\end{pmatrix}\boldsymbol{G}_{2}^{-1}\widetilde{\boldsymbol{B}}\boldsymbol{G}_{1}\nonumber\\
			&\quad \cdot \begin{pmatrix}
				\boldsymbol{Q}_{1}\boldsymbol{U}_{3}&&\\
				&\boldsymbol{Q}_{2}\boldsymbol{Q}_{3}&\\
				&&\boldsymbol{Q}_{4}
			\end{pmatrix}\nonumber\\
			&\label{eq:hatBf}=\begin{pmatrix}
				I_{j}&0&{\bf B}_{13}^{(6)}\epsilon\\
				0&0&\hat{{\bf B}}_{13}^{(6)}\epsilon\\
				0&I_{k}&{\bf B}_{23}^{(6)}\epsilon\\
				0&0&\hat{{\bf B}}_{23}^{(6)}\epsilon
			\end{pmatrix},\\
			\hat{\boldsymbol{C}}&=\begin{pmatrix}
				\boldsymbol{U}_{3}^{*}\boldsymbol{Q}_{1}^{-1}&&\\
				&\boldsymbol{Q}_{3}^{-1}\boldsymbol{Q}_{2}^{-1}&\\
				&&\boldsymbol{Q}_{4}^{-1}
			\end{pmatrix}\boldsymbol{G}_{1}^{-1}\widetilde{\boldsymbol{C}}\begin{pmatrix}
				\boldsymbol{V}_{1}&&\\
				&\boldsymbol{V}_{2}&\\
				&&\boldsymbol{V}_{3} \\
				&&& I_{q-l-s-t}
			\end{pmatrix}\nonumber\\
			&\label{eq:hatCh}=\begin{blockarray}{ccccc}
				\begin{block}{(cccc)c}
					{\sf \Sigma}&0&0&{\bf C}_{14}^{(3)}\epsilon&l\\
					0&0&0&\hat{{\bf C}}_{14}^{(3)}\epsilon&j-l\\
					0&I_{s}&0&{\bf C}_{24}^{(3)}\epsilon&s\\
					0&0&0&\hat{{\bf C}}_{24}^{(3)}\epsilon&k-s\\
					0&0&I_{t}&{\bf C}_{34}^{(3)}\epsilon&t\\
					0&0&0&\hat{{\bf C}}_{34}^{(3)}\epsilon&p-j-k-t\\
				\end{block}
			\end{blockarray}.
		\end{align}

	Applying the $(1, 1)$-block matrix to eliminate the $(1, 2)$-block of $\hat{\boldsymbol{A}}$ in \eqref{eq:hatAc}, it is easy to find a blocked elementary dual quaternion matrix $\boldsymbol{G}_{3}$ such that
	\begin{equation*}
		\hat{\boldsymbol{A}}\boldsymbol{G}_{3}=\begin{pmatrix}
			I_{i}&0\\
			0&{\bf A}_{22}^{(7)}\epsilon
		\end{pmatrix}.
	\end{equation*}
	Multiplying $\boldsymbol{G}_{3}^{-1}$ on the left-hand side of $\hat{\boldsymbol{B}}$ in \eqref{eq:hatBf} yields
	\begin{equation}\label{eq:GB}
		\boldsymbol{G}_{3}^{-1}\hat{\boldsymbol{B}}=\begin{pmatrix}
			I_{j}&{\bf B}_{12}^{(7)}\epsilon&{\bf B}_{13}^{(7)}\epsilon\\
			0&\hat{{\bf B}}_{12}^{(7)}\epsilon&\hat{{\bf B}}_{13}^{(7)}\epsilon\\
			0&I_{k}&{\bf B}_{23}^{(7)}\epsilon\\
			0&0&\hat{{\bf B}}_{23}^{(7)}\epsilon
		\end{pmatrix},
	\end{equation}
	further applying a blocked elementary matrix $\boldsymbol{G}_{4}$ to eliminate the nondiagonal blocks in the first blocked row of \eqref{eq:GB} such that
	\begin{equation*}
		\boldsymbol{G}_{3}^{-1}\hat{\boldsymbol{B}}\boldsymbol{G}_{4}=\begin{pmatrix}
			I_{j}&0&0\\
			0&\hat{{\bf B}}_{12}^{(7)}\epsilon&\hat{{\bf B}}_{13}^{(7)}\epsilon\\
			0&I_{k}&{\bf B}_{23}^{(7)}\epsilon\\
			0&0&\hat{{\bf B}}_{23}^{(7)}\epsilon
		\end{pmatrix}.
	\end{equation*}
	
	Multiplying $\boldsymbol{G}_{4}^{-1}$ on the left-hand side of $\hat{\boldsymbol{C}}$ in \eqref{eq:hatCh} gives
	\begin{equation*}
		\boldsymbol{G}_{4}^{-1}\hat{\boldsymbol{C}}=\begin{blockarray}{ccccc}
			\begin{block}{(cccc)c}
				{\sf \Sigma}&{\bf C}_{12}^{(4)}\epsilon&{\bf C}_{13}^{(4)}\epsilon&{\bf C}_{14}^{(4)}\epsilon\\
				0&\hat{{\bf C}}_{12}^{(4)}\epsilon&\hat{{\bf C}}_{13}^{(4)}\epsilon&\hat{{\bf C}}_{14}^{(4)}\epsilon\\
				0&I_{s}&0&{\bf C}_{24}^{(4)}\epsilon\\
				0&0&0&\hat{{\bf C}}_{24}^{(4)}\epsilon\\
				0&0&I_{t}&{\bf C}_{34}^{(4)}\epsilon\\
				0&0&0&\hat{{\bf C}}_{34}^{(4)}\epsilon\\
			\end{block}
		\end{blockarray},
	\end{equation*}
	further multiplying a unitary dual quaternion matrix 
	 \begin{equation*}
	 	\boldsymbol{V}_{4}=\begin{pmatrix}
	 		I_l &  -{\sf \Sigma}^{-1}{\bf C}_{12}^{(4)}\epsilon & -{\sf \Sigma}^{-1}{\bf C}_{13}^{(4)}\epsilon & -{\sf \Sigma}^{-1}{\bf C}_{14}^{(4)}\epsilon \\
	 		({\sf \Sigma}^{-1}{\bf C}_{12}^{(4)})^*\epsilon  &  I_s  &  0  &  0 \\
	 		({\sf \Sigma}^{-1}{\bf C}_{13}^{(4)})^*\epsilon  &  0  &  I_t  &  0  \\
	 		({\sf \Sigma}^{-1}{\bf C}_{14}^{(4)})^*\epsilon & 0 & 0 & I_{q-l-s-t}
	 	\end{pmatrix}
	 \end{equation*}
	 with the same form as in \cref{eq:DM1} on the right-hand side such that
	\begin{equation*}
		\boldsymbol{G}_{4}^{-1}\hat{\boldsymbol{C}}\boldsymbol{V}_{4}=\begin{blockarray}{ccccc}
			\begin{block}{(cccc)c}
				{\sf \Sigma}&0&0&0\\
				0&\hat{{\bf C}}_{12}^{(5)}\epsilon&\hat{{\bf C}}_{13}^{(5)}\epsilon&\hat{{\bf C}}_{14}^{(5)}\epsilon\\
				{\bf C}_{21}^{(5)}\epsilon&I_{s}&0&{\bf C}_{24}^{(5)}\epsilon\\
				0&0&0&\hat{{\bf C}}_{24}^{(5)}\epsilon\\
				{\bf C}_{31}^{(5)}\epsilon&0&I_{t}&{\bf C}_{34}^{(5)}\epsilon\\
				0&0&0&\hat{{\bf C}}_{34}^{(5)}\epsilon\\
			\end{block}
		\end{blockarray}.
	\end{equation*}

	Accumulating the transformation matrices gives
	\begin{subequations}
		\begin{align*}
			\boldsymbol{U}&=\boldsymbol{Q}_{a}\widetilde{\boldsymbol{U}}_{1}{\rm diag}({\boldsymbol{U}}_{1},I_{m-i}){\rm diag}(\boldsymbol{U}_{2},I_{m-i}){\rm diag}(\boldsymbol{U}_{3},I_{m-j}),\\
			\boldsymbol{V}&=\widetilde{\boldsymbol{U}}_{4}{\rm diag}(\boldsymbol{V}_{1},I_{q-l}){\rm diag}(I_{l},\boldsymbol{V}_{2},I_{q-l-s}){\rm diag}(I_{l+s},\boldsymbol{V}_{3},I_{q-l-s-t})\boldsymbol{V}_{4},\\
			\boldsymbol{P}&=\widetilde{\boldsymbol{U}}_{2}{\rm diag}(\boldsymbol{Q}_{b}^{(1)},I_{n-i}){\rm diag}(I_{i},\boldsymbol{Q}_{b}^{(2)})\boldsymbol{G}_{2}{\rm diag}(\boldsymbol{P}_{1}\boldsymbol{U}_{2},\boldsymbol{P}_{2})\\
			&\quad \cdot {\rm diag}(\boldsymbol{U}_{3},I_{i-j},\boldsymbol{Q}_{3},I_{n-i-k})\boldsymbol{G}_{3},\\
			\boldsymbol{Q}&=\widetilde{\boldsymbol{U}}_{3}{\rm diag}(\boldsymbol{Q}_{c}^{(1)},\boldsymbol{Q}_{c}^{(2)},\boldsymbol{Q}_{c}^{(3)})\boldsymbol{G}_{1}{\rm diag}(\boldsymbol{Q}_{1}\boldsymbol{U}_{3},\boldsymbol{Q}_{2}\boldsymbol{Q}_{3},\boldsymbol{Q}_{4})\boldsymbol{G}_{4}.
		\end{align*}
	\end{subequations}
	Then we can obtain 
	\begin{equation*}
		\boldsymbol{U}^{*}(\boldsymbol{ABC})\boldsymbol{V}=\begin{pmatrix}
			{\sf \Sigma}&\\
			&{\bf T}\epsilon
		\end{pmatrix},
	\end{equation*}
	where ${\sf \Sigma} \in \mathbb{D}^{l\times l}$ is a diagonal matrix with positive elements, ${\bf T}$ is an $(m-l)\times (q-l)$ quaternion matrix.	
	
	Finally, for the standard quaternion SVD of ${\bf T}$, there exist unitary quaternion matrices $\hat{\bf U}\in \mathbb{Q}^{(m-l)\times (m-l)}$ and $\hat{\bf V}\in \mathbb{Q}^{(q-l)\times (q-l)}$, such that
	\begin{equation}\label{svdofT}
	\hat{\bf{U}}^{*}{\bf T}\hat{\bf{V}}=\begin{pmatrix}
		\hat{\Sigma}&0\\
		0&0
	\end{pmatrix}.
	\end{equation}
	Let $\widetilde{\boldsymbol{U}}=\boldsymbol{U}\begin{pmatrix}
		I_l&0\\
		0&\hat{\bf{U}}
	\end{pmatrix}$ and $\widetilde{\boldsymbol{V}}=\boldsymbol{V}\begin{pmatrix}
	I_l&0\\
	0&\hat{\bf{V}}
	\end{pmatrix}$. Then we obtain DQSVD of the product $\boldsymbol{ABC}$ to be
	 \begin{equation}\label{DQSVDofabc}
	 	\widetilde{\boldsymbol{U}}^\ast(\boldsymbol{ABC})\widetilde{\boldsymbol{V}}=\begin{pmatrix}
	 		{\sf \Sigma}&&\\
	 		&\hat{\Sigma}\epsilon&\\
	 		&&&0
	 	\end{pmatrix}.
	 \end{equation}
	
\end{proof}
\begin{remark}
	Unlike the PPSVD of a real matrix triplet \cite{Zha}, the results of the PPSVD of a dual quaternion matrix triplet in \eqref{PPSVDmatx} can not straightforwardly reveal the singular values of the product matrix $\boldsymbol{ABC}$; they only display the appreciable singular values. With one more step about the standard quaternion SVD of ${\bf T}$ in \eqref{svdofT}, one can find all the singular values of $\boldsymbol{ABC}$, as can be seen from \eqref{DQSVDofabc}. Moreover, if $\boldsymbol{A}$, $\boldsymbol{B}$ and $\boldsymbol{C}$ satisfy ${\rm rank}(\boldsymbol{A})={\rm Arank}(\boldsymbol{A})$, ${\rm rank}(\boldsymbol{B})={\rm Arank}(\boldsymbol{B})$ and ${\rm rank}(\boldsymbol{C})={\rm Arank}(\boldsymbol{C})$, then the purely dual submatrices will vanish in the aforementioned decompositions in \eqref{PPSVDmatx}.
\end{remark}

%%%%%%%%%%%%%%%%%%%%%%%%%%%%%%%%%%%%%%%%%%%%
%%%%%%%Example of RSVD
%%%%%%%%%%%%%%%%%%%%%%%%%%%%%%%%%%%%%%%%%%%%

\section{Two illustrative examples}\label{sec:exm}
In this section, we present two artificial examples to illustrate the restricted SVD and the PPSVD of dual quaternion matrix triplets.

\begin{example}
Find the restricted SVD of the following dual quaternion triplet $(\boldsymbol{A},\boldsymbol{B},\boldsymbol{C})$, where
\begin{align*}
		\boldsymbol{A}&=\begin{pmatrix}
			1&\frac{\sqrt{2}}{2}+(\frac{\sqrt{2}}{2}{\bf k}+1)\epsilon&0\\
			{\bf k}\epsilon&\frac{\sqrt{2}}{2}+(\frac{\sqrt{2}}{2}{\bf k}+1)\epsilon&0\\
			0&0&\epsilon
		\end{pmatrix},\
		\boldsymbol{B}=\begin{pmatrix}
			1+({\bf i}+{\bf k})\epsilon&1+2{\bf j}+2{\bf k}\epsilon\\
			1+({\bf k}+1)\epsilon&1+(\sqrt{2}-1-2{\bf i}+{\bf k})\epsilon\\
			\epsilon&1
		\end{pmatrix},\\
		\boldsymbol{C}&=\begin{pmatrix}
			\epsilon&(\frac{\sqrt{2}}{2}+\frac{\sqrt{2}}{2}{\bf j})\epsilon&0\\
			0&\frac{\sqrt{2}}{2}-\epsilon&0\\
			0&0&1
		\end{pmatrix}.
\end{align*}
\end{example}

{\bf  Solution.}
Following the proof of \cref{DQRSVD111},
let $\boldsymbol{A}^{(1)}=\boldsymbol{A}, \boldsymbol{B}^{(1)}=\boldsymbol{B}, \boldsymbol{C}^{(1)}=\boldsymbol{C}.$ From the DQGSVD2 of $\begin{pmatrix}
	\boldsymbol{A}\\
	\boldsymbol{C}
\end{pmatrix}$, there exist $\boldsymbol{P}_{1}, \boldsymbol{Q}_{1}$ and $\boldsymbol{V}_{1}$ such that
\begin{align*}
		\boldsymbol{P}_{1}\boldsymbol{A}^{(1)}\boldsymbol{Q}_{1}
		:&=\begin{pmatrix}
			1&-{\bf k}\epsilon&0\\
			-{\bf k}\epsilon&1&0\\
			0&0&1
		\end{pmatrix}
		\begin{pmatrix}
			1&\frac{\sqrt{2}}{2}+(\frac{\sqrt{2}}{2}{\bf k}+1)\epsilon&0\\
			{\bf k}\epsilon&\frac{\sqrt{2}}{2}+(\frac{\sqrt{2}}{2}{\bf k}+1)\epsilon&0\\
			0&0&\epsilon
		\end{pmatrix}\begin{pmatrix}
			1&-\frac{\sqrt{2}}{2}-\epsilon&0\\
			0&1&0\\
			0&0&1
		\end{pmatrix}\\
		&=\begin{pmatrix}
			1&0&0\\
			0&\frac{\sqrt{2}}{2}+\epsilon&0\\
			0&0&\epsilon
		\end{pmatrix},\\
		\boldsymbol{V}_{1}\boldsymbol{C}^{(1)}\boldsymbol{Q}_{1}:&=\begin{pmatrix}
			1&-{\bf j}\epsilon&0\\
			-{\bf j}\epsilon&1&0\\
			0&0&1
		\end{pmatrix}\begin{pmatrix}
			\epsilon&(\frac{\sqrt{2}}{2}+\frac{\sqrt{2}}{2}{\bf j})\epsilon&0\\
			0&\frac{\sqrt{2}}{2}-\epsilon&0\\
			0&0&1
		\end{pmatrix}\begin{pmatrix}
			1&-\frac{\sqrt{2}}{2}-\epsilon&0\\
			0&1&0\\
			0&0&1
		\end{pmatrix}\\
		&=\begin{pmatrix}
			\epsilon&0&0\\
			0&\frac{\sqrt{2}}{2}-\epsilon&0\\
			0&0&1
		\end{pmatrix}.
\end{align*}

Let 
$
	\boldsymbol{P}^{(1)}=\boldsymbol{P}_{1},\
	\boldsymbol{Q}^{(1)}=\boldsymbol{Q}_{1}{\rm diag}(1,\sqrt{2}+2\epsilon,1),\
	\boldsymbol{V}^{(1)}=\boldsymbol{V}_1,\
	\boldsymbol{U}^{(1)}=I_2.
$
Then, $\boldsymbol{A}^{(1)}, \boldsymbol{B}^{(1)}, \boldsymbol{C}^{(1)}$ are updated to be
\begin{subequations}
	\begin{align}
		\label{ex1:A2}
		\boldsymbol{A}^{(2)}&=\boldsymbol{P}^{(1)}\boldsymbol{A}^{(1)}\boldsymbol{Q}^{(1)}=
		\left(\begin{array}{c|cc}
			1&0&0\\ \hline
			0&1+2\sqrt{2}\epsilon&0\\
			0&0&\epsilon
		\end{array}\right),\\
				\label{ex1:B2}
		\boldsymbol{B}^{(2)}&=\boldsymbol{P}^{(1)}\boldsymbol{B}^{(1)}\boldsymbol{U}^{(1)}=\begin{pmatrix}
			1+{\bf i}\epsilon&1+2{\bf j}+{\bf k}\epsilon\\
			\hline
			1+\epsilon&1+(\sqrt{2}-1)\epsilon\\
			\epsilon&1
		\end{pmatrix},\\
        \boldsymbol{C}^{(2)}&=\boldsymbol{V}^{(1)}\boldsymbol{C}^{(1)}\boldsymbol{Q}^{(1)}=\begin{pmatrix}
	\epsilon&0&0\\
	0&1&0\\
	0&0&1
\end{pmatrix}.
	\end{align}
\end{subequations}

Following \eqref{submx}, perform the DQGSVD of the submatrices in \eqref{ex1:A2}-\eqref{ex1:B2} 
\begin{equation*}
	\begin{blockarray}{cccc}
		\begin{block}{(cc|cc)}
			1+2\sqrt{2}\epsilon&0&1+\epsilon&1+(\sqrt{2}-1)\epsilon\\
			0&\epsilon&\epsilon&1\\
		\end{block}
	\end{blockarray},
\end{equation*}
there exist $\boldsymbol{P}_{2}, \boldsymbol{Q}_{2}, \boldsymbol{U}_{2}$ such that
	\begin{align*}
		& \boldsymbol{P}_{2}\begin{pmatrix}
			1+2\sqrt{2}\epsilon&0\\
			0&\epsilon
		\end{pmatrix}\boldsymbol{Q}_{2}\\
		&:=\begin{pmatrix}
			\frac{\sqrt{2}}{2}-\epsilon& -\frac{\sqrt{2}}{2} \\
			0&1
		\end{pmatrix}\begin{pmatrix}
			1+2\sqrt{2}\epsilon&0\\
			0&\epsilon
		\end{pmatrix}\begin{pmatrix}
			1&\epsilon\\
			-\epsilon&1
		\end{pmatrix}\\
		&~=\begin{pmatrix}
			\frac{\sqrt{2}}{2}+\epsilon&0\\
			0&\epsilon
		\end{pmatrix},\\
		& \boldsymbol{P}_{2}\begin{pmatrix}
			1+\epsilon&1+(\sqrt{2}-1)\epsilon\\
			\epsilon&1
		\end{pmatrix}\boldsymbol{U}_{2}\\
		&:=
		\begin{pmatrix}
			\frac{\sqrt{2}}{2}-\epsilon& -\frac{\sqrt{2}}{2} \\
			0&1
		\end{pmatrix}
		\begin{pmatrix}
			1+\epsilon&1+(\sqrt{2}-1)\epsilon\\
			\epsilon&1
		\end{pmatrix}\begin{pmatrix}
			1&\epsilon\\
			-\epsilon&1
		\end{pmatrix}\\
		&~=\begin{pmatrix}
			\frac{\sqrt{2}}{2}-\epsilon&0\\
			0&1
		\end{pmatrix}.
	\end{align*}

Following \eqref{eq:PQUV2}, let
$	\boldsymbol{P}^{(2)}={\rm diag}(1,\boldsymbol{P}_{2}),\
	\boldsymbol{Q}^{(2)}={\rm diag}(1,\boldsymbol{Q}_{2}),\
	\boldsymbol{V}^{(2)}={\rm diag}(1,\boldsymbol{Q}_{2}^{*}),\
	\boldsymbol{U}^{(2)}=\boldsymbol{U}_{2}.  $
Then, $\boldsymbol{A}^{(2)}, \boldsymbol{B}^{(2)}, \boldsymbol{C}^{(2)}$ are updated to be
	\begin{align*}
		&\boldsymbol{A}^{(3)}=\boldsymbol{P}^{(2)}\boldsymbol{A}^{(2)}\boldsymbol{Q}^{(2)}=
		\begin{pmatrix}
			1&0&0\\
			0&\frac{\sqrt{2}}{2}+\epsilon&0\\
			0&0&\epsilon
		\end{pmatrix},\\
		&\boldsymbol{C}^{(3)}=\boldsymbol{V}^{(2)}\boldsymbol{C}^{(2)}\boldsymbol{Q}^{(2)}=\begin{pmatrix}
			\epsilon&0&0\\
			0&1&0\\
			0&0&1
		\end{pmatrix}
			\end{align*}
			\begin{align*}
		&\boldsymbol{B}^{(3)}=\boldsymbol{P}^{(2)}\boldsymbol{B}^{(2)}\boldsymbol{U}^{(2)}=\begin{pmatrix}
			1-(1-{\bf i}+2{\bf j})\epsilon&1+2{\bf j}+(1+{\bf k})\epsilon\\
			\frac{\sqrt{2}}{2}-\epsilon&0\\
			0&1
		\end{pmatrix}.
	\end{align*}

Further take the blocked elementary dual quaternion matrices
	\begin{align*}
		\boldsymbol{P}^{(3)}&=\boldsymbol{P}_{3}=\begin{pmatrix}
			1&-\sqrt{2}+(\sqrt{2}-2-\sqrt{2}{\bf i}+2\sqrt{2}{\bf j})\epsilon&-1-2{\bf j}-(1+{\bf k})\epsilon\\
			0&1&0\\
			0&0&1\\
		\end{pmatrix},\\
		\boldsymbol{Q}^{(3)}&=\boldsymbol{Q}_{3}=\begin{pmatrix}
			1&1-(1-2\sqrt{2}-{\bf i}+2{\bf j})\epsilon&(1+2{\bf j})\epsilon\\
			0&1&0\\
			0&0&1
		\end{pmatrix},  
			\end{align*}	
			and unitary dual quaternion matrices
			\begin{align*}	
		\boldsymbol{U}^{(3)}&=I_{2},\quad
		\boldsymbol{V}^{(3)}=\begin{pmatrix}
			1&-\epsilon&0\\
			\epsilon&1&0\\
			0&0&1
		\end{pmatrix}.
	\end{align*}	
Then, $\boldsymbol{A}^{(3)}, \boldsymbol{B}^{(3)}, \boldsymbol{C}^{(3)}$ are updated to be
\begin{subequations}
	\begin{align*}
		&\boldsymbol{A}^{(4)}=\boldsymbol{P}^{(3)}\boldsymbol{A}^{(3)}\boldsymbol{Q}^{(3)}=\boldsymbol{A}^{(3)},\quad
		\boldsymbol{B}^{(4)}=\boldsymbol{P}^{(3)}\boldsymbol{B}^{(3)}\boldsymbol{U}^{(3)}=\begin{pmatrix}
			0 & 0   \\
			\frac{\sqrt{2}}{2}-\epsilon&0\\
			0&1
		\end{pmatrix},\\
		&\boldsymbol{C}^{(4)}=\boldsymbol{V}^{(3)}\boldsymbol{C}^{(3)}\boldsymbol{Q}^{(3)}=\boldsymbol{C}^{(3)}.
	\end{align*}
\end{subequations}

From \eqref{eq:afbtgm}, let 
\begin{equation*}
	\alpha=\frac{\sqrt{6}}{6}+\frac{5\sqrt{3}}{9}\epsilon,\quad\beta=\frac{\sqrt{2}}{2}-\epsilon,\quad
	\gamma=\frac{\sqrt{3}}{3}+\frac{2\sqrt{6}}{9}\epsilon,
\end{equation*}
and 
\begin{subequations}
	\begin{align*}
		\boldsymbol{P}^{(4)}&=I_{3},\quad
		\boldsymbol{Q}^{(4)}=\begin{pmatrix}
			1&0&0\\
			0&\frac{\sqrt{3}}{3}+\frac{2\sqrt{6}}{9}\epsilon&0\\
			0&0&1
		\end{pmatrix},\\
		\boldsymbol{U}^{(4)}&=I_{2},\quad
		\boldsymbol{V}^{(4)}=I_{3}.
	\end{align*}
\end{subequations}
Then we have 
\begin{subequations}
	\begin{align*}
		&\boldsymbol{P}^{(4)}\boldsymbol{A}^{(4)}\boldsymbol{Q}^{(4)}=\begin{pmatrix}
			1&0&0\\
			0&\frac{\sqrt{6}}{6}+\frac{5\sqrt{3}}{9}\epsilon&0\\
			0&0&\epsilon
		\end{pmatrix},\
		\boldsymbol{P}^{(4)}\boldsymbol{B}^{(4)}\boldsymbol{U}^{(4)}=\begin{pmatrix}
			0 & 0 \\
			\frac{\sqrt{2}}{2}-\epsilon&0\\
			0&1
		\end{pmatrix},\\
		&\boldsymbol{V}^{(4)}\boldsymbol{C}^{(4)}\boldsymbol{Q}^{(4)}=\begin{pmatrix}
			\epsilon&0&0\\
			0&\frac{\sqrt{3}}{3}+\frac{2\sqrt{6}}{9}\epsilon&0\\
			0&0&1
		\end{pmatrix}.
	\end{align*}
\end{subequations}

In summary, by taking
\begin{align*}
\boldsymbol{P}&=\boldsymbol{P}^{(4)}\boldsymbol{P}^{(3)}\boldsymbol{P}^{(2)}\boldsymbol{P}^{(1)}\\
&=\begin{pmatrix}
	1+{\bf k}\epsilon &  -1+(1-{\bf i}+2{\bf j}-{\bf k})\epsilon &  -2{\bf j}-(2-\sqrt{2}-{\bf i}+2{\bf j}+{\bf k})\epsilon  \\
	-\frac{\sqrt{2}}{2}{\bf k}\epsilon &  \frac{\sqrt{2}}{2} -\epsilon &  -\frac{\sqrt{2}}{2}  \\
	0 & 0& 1
	\end{pmatrix}, \\ \boldsymbol{Q}&=\boldsymbol{Q}^{(1)}\boldsymbol{Q}^{(2)}\boldsymbol{Q}^{(3)}\boldsymbol{Q}^{(4)}=\begin{pmatrix}
	1 & -(\frac{\sqrt{3}}{3}-\frac{\sqrt{3}}{3}{\bf i}+\frac{2\sqrt{3}}{3}{\bf j})\epsilon  & 2{\bf j}\epsilon  \\
	0 & \frac{\sqrt{6}}{3}+\frac{10\sqrt{3}}{9}\epsilon  &  \sqrt{2}\epsilon \\
	0 &  -\frac{\sqrt{3}}{3}\epsilon &  1
\end{pmatrix}, \\
	\boldsymbol{U}&=\boldsymbol{U}^{(1)}\boldsymbol{U}^{(2)}\boldsymbol{U}^{(3)}\boldsymbol{U}^{(4)}=\begin{pmatrix}
		1&\epsilon\\
		-\epsilon&1
	\end{pmatrix}, \\  	\boldsymbol{V}&=\boldsymbol{V}^{(4)}\boldsymbol{V}^{(3)}\boldsymbol{V}^{(2)}\boldsymbol{V}^{(1)}=\begin{pmatrix}
	1 & -(1+{\bf j})\epsilon  &  0\\
	(1-{\bf j})\epsilon & 1 & -\epsilon\\
	0 & \epsilon  & 1
	\end{pmatrix},
 \end{align*}
we can find the restricted SVD of $(\boldsymbol{A},\boldsymbol{B},\boldsymbol{C})$ to be
\begin{subequations}
	\begin{align*}
		&\boldsymbol{P}\boldsymbol{A}\boldsymbol{Q}=\begin{pmatrix}
			1&0&0\\
			0&\frac{\sqrt{6}}{6}+\frac{5\sqrt{3}}{9}\epsilon&0\\
			0&0&\epsilon
		\end{pmatrix},\
		\boldsymbol{P}\boldsymbol{B}\boldsymbol{U}=\begin{pmatrix}
			0&0\\
			\frac{\sqrt{2}}{2}-\epsilon&0\\
			0&1
		\end{pmatrix},\\
		&\boldsymbol{V}\boldsymbol{C}\boldsymbol{Q}=\begin{pmatrix}
			\epsilon&0&0\\
			0&\frac{\sqrt{3}}{3}+\frac{2\sqrt{6}}{9}\epsilon&0\\
			0&0&1
		\end{pmatrix}.
	\end{align*}
\end{subequations}

%%%%%%%%%%%%%%%%%%%%%%%%%%%%%%%%%%%%%%%%%%%%
%%%%%%%Example of PPSVD
%%%%%%%%%%%%%%%%%%%%%%%%%%%%%%%%%%%%%%%%%%%%
\begin{example}
Find the product-product SVD of the following dual quaternion triplet $(\boldsymbol{A},\boldsymbol{B},\boldsymbol{C})$, where
\begin{equation*}
	\begin{aligned}
		\boldsymbol{A}=\begin{pmatrix}
			\frac{1}{2}+\epsilon&-\frac{1}{2}{\bf i}\epsilon\\
			\frac{1}{2}{\bf k}\epsilon&\epsilon
		\end{pmatrix},\quad
		\boldsymbol{B}=\begin{pmatrix}
			1&{\bf i}\epsilon\\
			{\bf k}\epsilon&\frac{1}{2}
		\end{pmatrix},\quad
		\boldsymbol{C}=\begin{pmatrix}
			1&{\bf k}\epsilon\\
			{\bf j}\epsilon&{\bf i}\epsilon
		\end{pmatrix}.
	\end{aligned}
\end{equation*}
\end{example}

{\bf Solution.}
Following the proof of \cref{ThmDQPPRSVD}. According to \cref{PrePPSVD}, there exists 
$	\boldsymbol{U}_{1}=\begin{pmatrix}
		1&{\bf i}\epsilon\\
		{\bf i}\epsilon&1
	\end{pmatrix}$
such that
\begin{equation*}
		\boldsymbol{A}\boldsymbol{U}_{1}=\begin{pmatrix}
			\frac{1}{2}+\epsilon&0\\
			\frac{1}{2}{\bf k}\epsilon&\epsilon
		\end{pmatrix},\quad
		\boldsymbol{U}_{1}^{*}\boldsymbol{B}=\begin{pmatrix}
			1&\frac{1}{2}{\bf i}\epsilon\\
			({\bf k}-{\bf i})\epsilon&\frac{1}{2}
		\end{pmatrix}.
\end{equation*}
Then we can find a unitary dual quaternion matrix $\boldsymbol{U}_{2}=\begin{pmatrix}
	1&-\frac{1}{2}{\bf i}\epsilon\\
	-\frac{1}{2}{\bf i}\epsilon&1
\end{pmatrix}$ such that
\begin{equation*}
		\widetilde{\boldsymbol{B}}=\boldsymbol{U}_{1}^{*}\boldsymbol{B}\boldsymbol{U}_{2}
		=\begin{pmatrix}
			1&0\\
			({\bf k}-\frac{5}{4}{\bf i})\epsilon&\frac{1}{2}
		\end{pmatrix},\quad
		\boldsymbol{U}_{2}^{*}\boldsymbol{C}=\begin{pmatrix}
			1&{\bf k}\epsilon\\
			(\frac{1}{2}{\bf i}+{\bf j})\epsilon&{\bf i}\epsilon
		\end{pmatrix}.
\end{equation*}

Further from \cref{PrePPSVD}, we find $\boldsymbol{U}_{3}=\begin{pmatrix}
	1&-{\bf k}\epsilon\\
	-{\bf k}\epsilon&1
\end{pmatrix}$ 
such that
\begin{equation*}
	\widetilde{\boldsymbol{C}}=\boldsymbol{U}_{2}^{*}\boldsymbol{C}\boldsymbol{U}_{3}=\begin{pmatrix}
		1&0\\
		(\frac{1}{2}{\bf i}+{\bf j})\epsilon&{\bf i}\epsilon
	\end{pmatrix}.
\end{equation*}
Let 
$
	\boldsymbol{G}_{1}^{-1}=\begin{pmatrix}
		1&0\\
		-(\frac{1}{2}{\bf i}+{\bf j})\epsilon&1
	\end{pmatrix}.
$
Then
\begin{equation*}
	\begin{aligned}
		&\boldsymbol{G}_{1}^{-1}\widetilde{\boldsymbol{C}}=\begin{pmatrix}
			1&0\\
			-(\frac{1}{2}{\bf i}+{\bf j})\epsilon&1
		\end{pmatrix}\begin{pmatrix}
			1&0\\
			(\frac{1}{2}{\bf i}+{\bf j})\epsilon&{\bf i}\epsilon
		\end{pmatrix}=\begin{pmatrix}
			1&0\\
			0&{\bf i}\epsilon
		\end{pmatrix},\\
		&\widetilde{\boldsymbol{B}}\boldsymbol{G}_{1}=\begin{pmatrix}
			1&0\\
			({\bf k}-\frac{5}{4}{\bf i})\epsilon&\frac{1}{2}
		\end{pmatrix}\begin{pmatrix}
			1&0\\
			(\frac{1}{2}{\bf i}+{\bf j})\epsilon&1
		\end{pmatrix}=\begin{pmatrix}
			1&0\\
			({\bf k}+{\bf j}-\frac{3}{4}{\bf i})\epsilon&\frac{1}{2}
		\end{pmatrix}.
	\end{aligned}
\end{equation*}
Further let 
$
	\boldsymbol{G}_{2}^{-1}=\begin{pmatrix}
		1&0\\
		-({\bf k}+{\bf j}-\frac{3}{4}{\bf i})\epsilon&1
	\end{pmatrix}.
$
Then
	\begin{align*}
		&\boldsymbol{G}_{2}^{-1}\widetilde{\boldsymbol{B}}\boldsymbol{G}_{1}=\begin{pmatrix}
			1&0\\
			-({\bf k}+{\bf j}-\frac{3}{4}{\bf i})\epsilon&1
		\end{pmatrix}\begin{pmatrix}
			1&0\\
			({\bf k}+{\bf j}-\frac{3}{4}{\bf i})\epsilon&\frac{1}{2}
		\end{pmatrix}=\begin{pmatrix}
			1&0\\
			0&\frac{1}{2}
		\end{pmatrix},\\
		\label{eq:maxA}
		&\widetilde{\boldsymbol{A}}=\boldsymbol{A}\boldsymbol{U}_{1}\boldsymbol{G}_{2}=\begin{pmatrix}
			\frac{1}{2}+\epsilon&0\\
			\frac{1}{2}{\bf k}\epsilon&\epsilon
		\end{pmatrix}\begin{pmatrix}
			1&0\\
			({\bf k}+{\bf j}-\frac{3}{4}{\bf i})\epsilon&1
		\end{pmatrix}=\begin{pmatrix}
			\frac{1}{2}+\epsilon&0\\
			\frac{1}{2}{\bf k}\epsilon&\epsilon
		\end{pmatrix}.
	\end{align*}

Construct $\boldsymbol{U}_{4}=\begin{pmatrix}
	1&-{\bf k}\epsilon\\
	-{\bf k}\epsilon&1
\end{pmatrix}$. 
Then we have
\begin{equation*}
	\boldsymbol{U}_{4}\widetilde{\boldsymbol{A}}=\begin{pmatrix}
		1&-{\bf k}\epsilon\\
		-{\bf k}\epsilon&1
	\end{pmatrix}\begin{pmatrix}
		\frac{1}{2}+\epsilon&0\\
		\frac{1}{2}{\bf k}\epsilon&\epsilon
	\end{pmatrix}=\begin{pmatrix}
		\frac{1}{2}+\epsilon&0\\
		0&\epsilon
	\end{pmatrix}.
\end{equation*}
Let $\widetilde{\boldsymbol{P}}=\begin{pmatrix}
	2-4\epsilon & 0 \\
	0 & 1
\end{pmatrix}$, $\hat{\boldsymbol{P}}=\begin{pmatrix}
	1&0\\
	0&2
\end{pmatrix}$. 
Then we have
\begin{align*}
	\boldsymbol{U}_{4}\widetilde{\boldsymbol{A}}\widetilde{\boldsymbol{P}}&=
	\begin{pmatrix}
		\frac{1}{2}+\epsilon&0\\
		0&\epsilon
	\end{pmatrix}\begin{pmatrix}
	2-4\epsilon & 0 \\
	0 & 1
	\end{pmatrix}=\begin{pmatrix}
	1&0\\
	0&\epsilon
	\end{pmatrix}.	\\
	\widetilde{\boldsymbol{P}}^{-1}\boldsymbol{G}_{2}^{-1}\widetilde{\boldsymbol{B}}\boldsymbol{G}_{1}\widetilde{\boldsymbol{P}}\hat{\boldsymbol{P}}&=\begin{pmatrix}
		\frac{1}{2}+\epsilon&0\\
		0&1
	\end{pmatrix}\begin{pmatrix}
	1&0\\
	0&\frac{1}{2}
	\end{pmatrix}\begin{pmatrix}
	2-4\epsilon & 0 \\
	0 & 1
	\end{pmatrix}\begin{pmatrix}
	1 & 0 \\
	0 & 2
	\end{pmatrix}=\begin{pmatrix}
	1&0\\
	0&1
	\end{pmatrix}. \\
	\hat{\boldsymbol{P}}^{-1}\widetilde{\boldsymbol{P}}^{-1}\boldsymbol{G}_{1}^{-1}\widetilde{\boldsymbol{C}}&=\begin{pmatrix}
		1&0\\
		0&\frac{1}{2}
	\end{pmatrix}\begin{pmatrix}
	\frac{1}{2}+\epsilon&0\\
	0&1
	\end{pmatrix}\begin{pmatrix}
	1&0\\
	0&{\bf i}\epsilon
	\end{pmatrix}=\begin{pmatrix}
	\frac{1}{2}+\epsilon&0\\
	0&\frac{1}{2}{\bf i}\epsilon
	\end{pmatrix}.
\end{align*}

Let 
$\boldsymbol{U}^{*}=\boldsymbol{U}_{4},\  \boldsymbol{V}=\boldsymbol{U}_{3}.$
We eventually obtain DQSVD of the matrix $\boldsymbol{ABC}$ to be
\begin{equation*}
	\boldsymbol{U}^{*}(\boldsymbol{ABC})\boldsymbol{V}=\begin{pmatrix}
		1&-{\bf k}\epsilon\\
		-{\bf k}\epsilon&1
	\end{pmatrix}
	\begin{pmatrix}
	    \frac{1}{2}+\epsilon &  \frac{1}{2}{\bf k}\epsilon  \\
	     \frac{1}{2}{\bf k}\epsilon  &  0
	\end{pmatrix}
	\begin{pmatrix}
		1&-{\bf k}\epsilon\\
		-{\bf k}\epsilon&1
	\end{pmatrix}
	=\begin{pmatrix}
		\frac{1}{2}+\epsilon&0\\
		0&0
	\end{pmatrix}.
\end{equation*}
In summary, by taking
\begin{equation*}
	\boldsymbol{P}=\boldsymbol{U}_{1}\boldsymbol{G}_{2}\widetilde{\boldsymbol{P}}
%	=\begin{pmatrix}
%		1&{\bf i}\epsilon\\
%		{\bf i}\epsilon&1
%	\end{pmatrix}\begin{pmatrix}
%	1&0\\
%	-({\bf k}+{\bf j}-\frac{3}{4}{\bf i})\epsilon&1
%	\end{pmatrix}^{-1}\begin{pmatrix}
%	2-4\epsilon & 0 \\
%	0 & 1
%	\end{pmatrix}
=\begin{pmatrix}
	2-4\epsilon&{\bf i}\epsilon\\
	(\frac{1}{2}{\bf i}+2{\bf j}+2{\bf k})\epsilon&1
	\end{pmatrix}, \   \  
	\boldsymbol{Q}=\boldsymbol{U}_{2}\boldsymbol{G}_{1}\widetilde{\boldsymbol{P}}\hat{\boldsymbol{P}}
%	=\begin{pmatrix}
%		1&-\frac{1}{2}{\bf i}\epsilon\\
%		-\frac{1}{2}{\bf i}\epsilon&1
%	\end{pmatrix}\begin{pmatrix}
%	1&0\\
%	-(\frac{1}{2}{\bf i}+{\bf j})\epsilon&1
%	\end{pmatrix}^{-1}\begin{pmatrix}
%	2-4\epsilon & 0 \\
%	0 & 1
%	\end{pmatrix}\begin{pmatrix}
%	1 & 0 \\
%	0 & 2
%	\end{pmatrix}
=\begin{pmatrix}
	2-4\epsilon & -\frac{1}{2}{\bf i}\epsilon \\
	2{\bf j}\epsilon & 2
	\end{pmatrix},
\end{equation*}
we have 
\begin{align*}
	&\boldsymbol{U}^{*}\boldsymbol{A}\boldsymbol{P}=\begin{pmatrix}
		1&0\\
		0&\epsilon
	\end{pmatrix}, \boldsymbol{P}^{-1}\boldsymbol{B}\boldsymbol{Q}=\begin{pmatrix}
	1&0\\
	0&1
	\end{pmatrix},\
	\boldsymbol{Q}^{-1}\boldsymbol{C}\boldsymbol{V}=\begin{pmatrix}
		\frac{1}{2}+\epsilon&0\\
		0&\frac{1}{2}{\bf i}\epsilon
	\end{pmatrix}.
\end{align*}

\section{Conclusions}\label{sec:concl}

In this paper, we have investigated the restricted singular value decomposition and product-product singular value decomposition of dual quaternion matrix triplets $(\boldsymbol{A},\boldsymbol{B},\boldsymbol{C})$. The RSVD and PPSVD of dual quaternion matrices differ from those over the complex field due to the unique properties of the purely dual part of dual quaternion matrices. Two examples have been presented to illustrate the mentioned decompositions. This paper solely focuses on the feasibility of the decompositions; meanwhile, certain geometric interpretations, algebraic issues, and uniqueness properties merit further discussion in future research.

%Consequently, the decomposition results must address the handling of the purely dual submatrices.

%%%%%%%%%%%%%%%%%%%%%%%%%%%%%%%%%%%%%%%%%%%%%%%%%%%%%%%%%%%%%%%%%%%%%%

%%%%%%%%%%%%%%%%%%%%%%%%%%%%%%%%%%%%%%%%%%%%%%%%%%%%%%%
%%% Acknowledgements. ÖÂÐ»
%%%%%%%%%%%%%%%%%%%%%%%%%%%%%%%%%%%%%%%%%%%%%%%%%%%%%%%

%%%%%%%%%%%%%%%%%%%%%%%%%%%%%%%%%%%%%%%%%%%%%%%%%%%%%%%
%%% Conflict of interest. ×÷ÕßÀûÒæÉùÃ÷
%%%%%%%%%%%%%%%%%%%%%%%%%%%%%%%%%%%%%%%%%%%%%%%%%%%%%%%
%\InterestConflict

%%%%%%%%%%%%%%%%%%%%%%%%%%%%%%%%%%%%%%%%%%%%%%%%%%%%%%%
%%% Supplements. ²¹³ä²ÄÁÏ, ·Ç±ØÑ¡
%%%%%%%%%%%%%%%%%%%%%%%%%%%%%%%%%%%%%%%%%%%%%%%%%%%%%%%
%\Supplements{}

%%%%%%%%%%%%%%%%%%%%%%%%%%%%%%%%%%%%%%%%%%%%%%%%%%%%%%%
%%% Reference section. ²Î¿ŒÎÄÏ×
%%% citation in the content using "some words~\cite{1,2}".
%%% ~ is needed to make the reference number is on the same line with the word before it.
%%%%%%%%%%%%%%%%%%%%%%%%%%%%%%%%%%%%%%%%%%%%%%%%%%%%%%%

%%%%%%%%%%%%%%%%%%%%%%%%%%%%%%%%%%%%%%%%%%%%%%%%%%%%%%%
%%% Appendix sections. žœÂŒÕÂœÚ, ·Ç±ØÑ¡
%%%%%%%%%%%%%%%%%%%%%%%%%%%%%%%%%%%%%%%%%%%%%%%%%%%%%%%

%\appendix 

\section*{Acknowledgments}
The work is supported by the Fundamental Research Funds for the Central Universities under grant 2024ZDPYCH1004.

\end{document}